\theoremstyle{plain}
\newtheorem{thm}{Theorem}[section]
\newtheorem{prop}[thm]{Proposition}
\newtheorem{lem}[thm]{Lemma}
\newtheorem{cor}[thm]{Corollary}
\newtheorem{conj}[thm]{Conjecture}
\theoremstyle{definition}
\newtheorem{defn}[thm]{Definition}
\newtheorem{ex}[thm]{Example}
\theoremstyle{remark}
\newtheorem{rem}[thm]{Remark}
\begin{document}
	\newpage
	
	\title{Hodge decomposition for Kato manifolds}

  \author{Giacomo Perri}

	\address{Giacomo Perri \newline
		\textsc{\indent Institut for Matematik, Aarhus University\newline 
			\indent 8000, Aarhus C, Denmark}}
	\email{g.perri@math.au.dk}

\thanks{The author is supported by the Sapere Aude project $\lq\lq$Conformal geometry: metrics and cohomology" at Aarhus University}

\begin{abstract}
  We prove that any Kato manifold satisfies the Hodge decomposition, in the sense that $b_k=\sum_{p+q=k}h^{p, q}$, by relating its cohomology to the corresponding cohomology of its modification data. We give, therefore, more evidence supporting a conjecture of Ornea--Verbitsky stating that compact locally conformally K\" ahler manifolds satisfy the Hodge decomposition. We further study Bott--Chern and Aeppli cohomology of Kato
manifolds, showing that in certain degrees they coincide with Dolbeault cohomology.
\end{abstract}

\maketitle


\section{Introduction}

It is a fundamental result in complex geometry that the presence of a Kähler metric on a compact complex manifold imposes a special cohomological behaviour. One of the most interesting cohomological restrictions that compact Kähler manifolds satisfy is the $dd^c$-lemma. This further implies that they satisfy properties such as Hodge decomposition 
\begin{equation}
    b_k=\sum_{p+q=k}h^{p,q},
\end{equation}
and symmetry of the Hodge numbers ($h^{p,q}=h^{q,p}$), which eventually lead to special features of the Betti numbers.

Generic compact complex manifolds do not satisfy special cohomological properties, apart from some very general ones, such as Fr\" olicher inequality ($b_k \leq \sum_{p+q=k}h^{p, q}$) or Serre duality ($h^{p, q}=h^{n-p, n-q}$). A fundamental question, central to the modern problem of classifying complex manifolds and geometric structures, is whether a special Hermitian metric imposes cohomological obstructions.
One type of metric that has been intensively studied in the non-K\"ahler setting is the {\textit{locally conformally K\" ahler}} (lcK) metric. LcK metrics provide numerous examples—almost all non-K\"ahler surfaces, O--T manifolds, and Kato manifolds—and also offer a link to K\"ahler geometry through a special subclass called Vaisman manifolds. In general, compact lcK manifolds are not known to have cohomological restrictions that arise directly from the metric, apart from having a non-vanishing Bott–Chern group \(H_{BC}^{1,1}\) and satisfying the inequality \(2h^{0,1} > b_1\), both consequences of Vaisman’s theorem \cite{Vaisman}. However, empirically, Hodge decomposition has appeared as a common denominator for classes of manifolds admitting lcK metrics, although the methods used to establish it vary significantly from case to case.

Indeed, if the compact lcK manifold is Vaisman, the result is known by the work of Tsukada \cite{Tsukada}, whose proof relies heavily on the canonical transversal K\" ahler foliation that
Vaisman manifolds possess. Compact lcK surfaces also satisfy this property for algebraic reasons related to their low dimension. LcK manifolds with potential satisfy it due to complex deformation arguments \cite[Corollary 31.28]{ornea2024principleslocallyconformallykahler}, and finally, for Oeljeklaus--Toma manifolds, the result was proven in \cite{OTHodge} and is of a complex analytic nature. Only one class of known explicit examples of compact lcK manifolds remains for which the Dolbeault cohomology has not yet been computed—and therefore the property is not known to hold—namely Kato manifolds, as shown in \cite{istrati2019classkatomanifolds} and \cite{IOPR22}.  
So far, all known lcK manifolds either fall into one of the cases mentioned above or can be obtained from compact lcK manifolds via blow-ups~\cite{orneablowupslcK} or modifications~\cite{barbaro2025calabiyaulocallyconformallykahler}.
These blow-ups admitting an lcK structure are performed along specific 
complex submanifolds, which turn out to be Kähler. 
Hence, if the original lcK manifold satisfies the Hodge decomposition, 
so does its blow-up, by the blow-up formulas in Theorem~\ref{blowupd} and 
Theorem~\ref{blowupD}. 
The same conclusion holds when the modification is performed at a single point 
(Remark~\ref{lckremark}).
Motivated by this, the following conjecture has been formulated:

\begin{conj}[Ornea--Verbitsky]
    Any compact lcK manifold satisfies the Hodge decomposition.
\end{conj}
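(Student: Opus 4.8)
The plan is to reformulate the Hodge decomposition as the degeneration at the first page of the Frölicher spectral sequence. Since Frölicher's inequality $b_k \le \sum_{p+q=k} h^{p,q}$ holds on any compact complex manifold, the conjecture is equivalent to the assertion that, for every compact lcK manifold $(X,J,g)$, the spectral sequence $E_1^{p,q} = H^{p,q}_{\bar\partial}(X) \Rightarrow H^{p+q}_{dR}(X)$ satisfies $E_1 = E_\infty$, i.e. that all higher differentials $d_r$ with $r \ge 1$ vanish. First I would fix a Gauduchon representative of the lcK conformal class, so that the Lee form $\theta$ is coclosed, and record the lcK identities relating $\partial,\bar\partial$ to contraction with $\theta$ and with the fundamental form $\omega$, which satisfies $d_\theta \omega = 0$ for the twisted differential $d_\theta := d - \theta\wedge\cdot$.

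The main engine I would build is an equivariant Hodge theory on the minimal Kähler cover. By definition of the lcK structure there is a covering $\pi\colon \tilde X \to X$ with deck group $\Gamma$ acting by holomorphic homotheties on a global Kähler form $\tilde\omega = e^{-\varphi}\,\pi^*\omega$, where $\pi^*\theta = d\varphi$, with homothety character $\chi\colon \Gamma \to \mathbb R_{>0}$. On $\tilde X$ the full Kähler package holds at the level of forms, so the untwisted Frölicher differentials vanish there; the task is to descend this to $X$. Concretely, I would identify $H^{p,q}_{\bar\partial}(X)$ and $H^k_{dR}(X)$ with suitable $\Gamma$-invariant (weight-zero) pieces of a twisted Dolbeault and de Rham theory on $\tilde X$, and then show that the $\partial\bar\partial$-type relations surviving the descent are exactly those forcing $d_r = 0$. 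The natural comparison is with the Morse--Novikov cohomology of $(X,d_\theta)$, for which the lcK analogue of the Kähler identities yields a $d_\theta$--$d^c_\theta$-lemma in a range of bidegrees; the crux is to transfer a statement about the \emph{twisted} complex to the \emph{untwisted} one that governs the ordinary Hodge numbers.

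As a complementary, more structural route, I would reduce the general case to the classes where the decomposition is already established---Vaisman manifolds \cite{Tsukada}, lcK surfaces, lcK manifolds with potential \cite{ornea2024principleslocallyconformallykahler}, Oeljeklaus--Toma manifolds \cite{OTHodge}, and Kato manifolds (the present paper)---by showing that every compact lcK manifold arises from a member of one of these families through operations preserving $E_1$-degeneration. The relevant operations are already under control: blow-ups along the (Kähler) centres that carry an lcK structure preserve the decomposition by the blow-up formulas (Theorem~\ref{blowupd} and Theorem~\ref{blowupD}), and point modifications do so by Remark~\ref{lckremark}. What remains is an lcK ``minimal-model'' statement guaranteeing that no further building blocks occur.

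The main obstacle is precisely the passage from twisted to untwisted cohomology. The lcK condition controls the $d_\theta$-complex, in which the fundamental form is closed and the Kähler identities have twisted analogues, but the operators $\partial,\bar\partial$ admit no genuine Kähler identities on $X$ itself, so degeneration of the twisted spectral sequence does not formally imply degeneration of the Frölicher spectral sequence. Closing this gap---or, failing that, establishing the structural reduction above---is the single step separating the known cases from the full conjecture, and it is the reason the statement remains open.
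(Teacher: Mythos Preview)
This statement is an \emph{open conjecture}, not a theorem, and the paper does not claim to prove it. The paper's contribution is Theorem~\ref{Hodge decomposition}, which verifies the conjecture in the special case of Kato manifolds; the conjecture in full generality is presented only as motivation. There is therefore no proof in the paper for you to compare against.

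Your proposal is not a proof but a research programme, and you yourself acknowledge this in the final paragraph. Both routes you outline have genuine, unresolved gaps. In the first route, the identification of $H^{p,q}_{\bar\partial}(X)$ with a $\Gamma$-invariant piece of a twisted theory on the K\"ahler cover $\tilde X$ is not available in general: the cover is non-compact, the deck group need not act with any finiteness properties, and---as you note---the K\"ahler identities on $\tilde X$ control the $d_\theta$-complex on $X$, not the ordinary Dolbeault complex. No mechanism is known for transferring $E_1$-degeneration across this twist, and the proposal does not supply one. In the second route, the required ``lcK minimal-model'' statement---that every compact lcK manifold is obtained from a Vaisman, surface, potential, O--T, or Kato manifold by blow-ups along K\"ahler centres or point modifications---is itself an open structural problem at least as hard as the conjecture; there is no classification of compact lcK manifolds to draw on. So neither branch closes, and what you have written is a fair summary of why the conjecture remains open rather than a proof of it.
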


The current work aims at bringing more evidence supporting the conjecture by proving it on the full class of Kato manifolds:

\begin{thm}\label{Hodge decomposition}
    Any Kato manifold satisfies the Hodge decomposition.
\end{thm}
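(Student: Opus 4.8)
The plan is to compute the de Rham and Dolbeault cohomology of a Kato manifold $X$ out of the corresponding cohomology of its modification data, and to check that the two computations produce the same numbers because the pieces involved are Kähler. Recall that $X$ of complex dimension $n$ is assembled from modification data $(\mathcal{B},\widehat{\mathcal{B}},\pi,\sigma)$: a ball $\mathcal{B}\subset\mathbb{C}^{n}$, a proper modification $\pi\colon\widehat{\mathcal{B}}\to\mathcal{B}$ which is a finite composition of blow-ups with smooth centres over finitely many points --- so that its exceptional divisor $E$ is a simple normal crossings divisor whose components, and all of whose closed strata, are iterated blow-ups of projective spaces, in particular projective and hence Kähler --- together with a holomorphic embedding $\sigma$ of a smaller closed ball $\overline{\mathcal{B}'}$ into the locus where $\pi$ is biholomorphic. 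Then $X=W/{\sim}$, where $W:=\widehat{\mathcal{B}}\setminus\sigma(\mathcal{B}')$ and its outer boundary sphere is identified with its inner boundary sphere through the contracting map $F=\sigma\circ(\text{rescaling})\circ\pi$. In particular $X$ contains a global spherical shell, $\pi_{1}(X)$ surjects onto $\mathbb{Z}$, and there is a distinguished infinite cyclic cover $\widetilde{X}$ on which $F$ generates the deck action.

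First I would record the Mayer--Vietoris (Wang) long exact sequence of this cover, both for de Rham cohomology,
\[
\cdots\longrightarrow H^{k}_{dR}(X)\longrightarrow H^{k}_{dR}(\widetilde{X})\xrightarrow{\ F^{*}-\mathrm{id}\ }H^{k}_{dR}(\widetilde{X})\longrightarrow H^{k+1}_{dR}(X)\longrightarrow\cdots,
\]
and, for each fixed $p$, for Dolbeault cohomology,
\[
\cdots\longrightarrow H^{p,q}_{\bar\partial}(X)\longrightarrow H^{p,q}_{\bar\partial}(\widetilde{X})\xrightarrow{\ F^{*}-\mathrm{id}\ }H^{p,q}_{\bar\partial}(\widetilde{X})\longrightarrow H^{p,q+1}_{\bar\partial}(X)\longrightarrow\cdots,
\]
the latter obtained from a \v{C}ech--Dolbeault presentation of $X$ relative to the two open charts of the global spherical shell (the connecting map raises $q$, the relevant radial class $[\bar\partial\log\|z\|^{2}]$ being of type $(0,1)$). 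Both sequences reduce the computation of $H^{\bullet}(X)$ to the cohomology of $\widetilde{X}$ together with the endomorphism $F^{*}-\mathrm{id}$. The cohomology of $\widetilde{X}$ is built out of that of $\widehat{\mathcal{B}}$, which the blow-up formulas of Theorem~\ref{blowupd} and Theorem~\ref{blowupD} express in terms of $H^{\bullet}(\mathcal{B})$ --- de Rham cohomology of a ball, and Dolbeault groups $H^{p,q}_{\bar\partial}(\mathcal{B})$ that vanish for $q>0$ --- together with the Hodge-pure exceptional contributions of the successive smooth (Kähler) centres of $\pi$. The key point is that $F^{*}$ respects this decomposition: it is the identity on the part pulled back from the base ball, while on the exceptional part it is the combinatorial operator determined by how $\pi$ and $\sigma$ interlock --- and this is literally the same operator in the de Rham and in the Dolbeault picture, because the exceptional contributions are Kähler and $F$ acts on them through holomorphic (bi)meromorphic maps.

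Since $H^{\bullet}(\mathcal{B})$ and all the exceptional contributions satisfy the Hodge decomposition and carry a $(p,q)$-refinement compatible with $F^{*}$, the kernel and cokernel of $F^{*}-\mathrm{id}$ on $H^{k}_{dR}(\widetilde{X})$ split into $(p,q)$-pieces matching those of $F^{*}-\mathrm{id}$ on $\bigoplus_{p+q=k}H^{p,q}_{\bar\partial}(\widetilde{X})$; feeding this into the two long exact sequences yields $b_{k}(X)=\sum_{p+q=k}h^{p,q}(X)$, equivalently the degeneration at the first page of the Frölicher spectral sequence of $X$. The blow-up formulas and the one-point modification argument recalled in the introduction (see Remark~\ref{lckremark}) are exactly what make these base ingredients Kähler, so the same scheme also covers Kato manifolds obtained as lcK blow-ups.

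The main obstacle is the Dolbeault half of the middle step: constructing the Dolbeault Wang sequence rigorously on the non-compact cover $\widetilde{X}$, proving that $\ker$ and $\operatorname{coker}$ of $F^{*}-\mathrm{id}$ on the a priori infinite-dimensional groups $H^{p,q}_{\bar\partial}(\widetilde{X})$ are finite-dimensional, and showing that the contracting automorphism $F$ does not manufacture spurious $\bar\partial$-classes when one passes from $\widehat{\mathcal{B}}$ to $\widetilde{X}$ and then to $X$ --- i.e.\ that $F^{*}$ acts on the non-invariant Dolbeault cohomology of the cover in the same controlled way it acts on the topologically transparent de Rham cohomology. Running the same machinery for the Bott--Chern and Aeppli complexes, where the corresponding blow-up formulas and the $\partial\bar\partial$-lemma on the Kähler pieces are available, then also yields the stated comparison of those cohomologies with Dolbeault cohomology in the relevant degrees.
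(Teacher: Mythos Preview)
Your overall shape---Mayer--Vietoris/Wang sequence plus reduction to K\"ahler pieces---is morally the same strategy as the paper's, but the proposal contains two concrete errors and leaves the central technical step unresolved.

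First, two factual slips about the Kato data. You say $\sigma$ embeds the small ball ``into the locus where $\pi$ is biholomorphic''; in the non-trivial case one has precisely the opposite, $\sigma(0)\in E$, and the paper explicitly reduces to this situation. This matters because then $F=\sigma\circ\pi$ maps a neighbourhood of the outer sphere onto a neighbourhood of a point in the exceptional divisor, so your picture of $F^{*}$ being ``the identity on the part pulled back from the base ball'' and acting only combinatorially on exceptional classes is not correct: on holomorphic $p$-forms on the ball, $\gamma^{*}=(\pi\sigma)^{*}$ is a genuine contraction operator, not the identity. Second, you assert that $\pi$ is a finite composition of blow-ups with smooth centres whose exceptional strata are iterated blow-ups of projective spaces; a general proper modification at a point need not come with this description, and the paper does not use it---it only invokes Weak Factorization as a side remark, and even then with blow-ups \emph{and} blow-downs.

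The decisive gap is the one you yourself flag as ``the main obstacle'': you have not shown that $F^{*}-\mathrm{id}$ on the infinite-dimensional Dolbeault groups of the non-compact cover has finite-dimensional kernel and cokernel, nor that these match the de~Rham ones. This is exactly the content of the paper's Theorem~\ref{beta}: working directly with the compact $X$ and the cover $\{U,V\}$ of Section~\ref{picture}, the Mayer--Vietoris connecting map is identified (after Hartogs-type simplifications) with $\mathrm{Id}-\gamma^{*}$ on $\Omega^{p}(\mathbb{B})$, and one proves by hand, using that $\gamma$ is a holomorphic contraction, that this map is an isomorphism for $p\ge 1$ and has one-dimensional cokernel for $p=0$. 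Without this, your Wang sequence for Dolbeault cohomology has no teeth. The paper then finishes not by matching exceptional K\"ahler pieces directly, but by comparing $h^{p,q}(X)$ and $b_k(X)$ to those of the compact Moishezon manifold $\widehat{\mathbb{CP}^n}$ obtained by compactifying the modification; Hodge decomposition on $\widehat{\mathbb{CP}^n}$ then transfers to $X$ via Theorem~\ref{Hodge numbers21}.
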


Kato manifolds are compact complex manifolds containing a global spherical shell. 
In \cite{Kato77}, it is proved that any such manifold can be obtained through an explicit construction. 
One starts with a proper holomorphic modification 
\(\pi : \widehat{\mathbb{B}} \to \mathbb{B}\) at finitely many points, 
together with a holomorphic embedding 
\(\sigma : \overline{\mathbb{B}} \hookrightarrow \widehat{\mathbb{B}}\). 
The Kato manifold \(X = X(\pi, \sigma)\) defined by \(\pi\) and \(\sigma\) 
is obtained by gluing the boundary \(\partial \widehat{\mathbb{B}}\) 
to the boundary of \(\sigma(\overline{\mathbb{B}})\) via the map \(\sigma \circ \pi\).  
Any Kato manifold can be modified via a small deformation to a modification \(\widehat{H}\) of a primary Hopf manifold $H$ \cite{Kato77}.  
Moreover, \(X\) is an lcK manifold if and only if \(\widehat{\mathbb{B}}\) is Kähler \cite[Theorem 10.3]{IOPR22}.

In \cite[Theorem~1.4]{IOPR22}, the authors obtain partial results for the Hodge decomposition of a class of toric Kato manifolds, using deformation arguments related to the toric structure. We note that our approach differs from that in \cite{IOPR22} and is more direct.  
It relies on expressing, via the Mayer--Vietoris sequence applied to a suitable open cover of a Kato manifold \(X\), the Dolbeault cohomology of \(X\) in terms of the Dolbeault cohomology of \(\widehat{\mathbb{B}}\) (Theorem~\ref{Hodge numbers}).  
We then observe that the Kato modification \(\pi\) induces a natural modification
\(\pi : \widehat{\mathbb{CP}^n} \to \mathbb{CP}^n\),
and, again through Mayer--Vietoris sequences, we relate the Hodge and Betti numbers of \(\widehat{\mathbb{B}}\) to those of \(\widehat{\mathbb{CP}^n}\) (Proposition~\ref{noWFT}).  
This allows us to express the Hodge and Betti numbers of \(X\) in terms of those of \(\widehat{\mathbb{CP}^n}\) (Theorem~\ref{Hodge numbers21}), and thus reduce the Hodge decomposition on \(X\) to that on \(\widehat{\mathbb{CP}^n}\).  
Since \(\widehat{\mathbb{CP}^n}\) is a Moishezon manifold, the Hodge decomposition holds on it \cite[Theorems~2.2.14 and~2.2.18]{Moishezon}, and the result therefore extends to any Kato manifold.  
We also prove that \(X\) and \(\widehat{H}\) have the same Hodge and Betti numbers (Theorem \ref{samenumbers}).

In Section \ref{SectionExamples}, we present explicit computations of the Hodge numbers for two relevant classes of Kato manifolds.
The first class consists of Kato manifolds whose modification is obtained through a finite sequence of blow-ups at points.
The second class is that of toric Kato manifolds \cite{IOPR22}, for which the modification is induced by a toric modification at the origin.

Finally, we study the Bott--Chern and Aeppli cohomology of Kato manifolds,
proving that in certain degrees these cohomologies coincide with Dolbeault cohomology.

We restrict to Kato manifolds of complex dimension \(n \ge 3\), since in complex dimension one all compact complex manifolds are Kähler, and the Hodge decomposition for compact complex surfaces is well-known \cite[Chapter 4.2]{VandeVen}.

\hfill

\noindent {\it Acknowledgements.} The author sincerely thanks Alexandra Otiman for her support, encouragement, and many valuable discussions and suggestions. The author is also grateful to Nicolina Istrati for her comments and remarks.

\section{Preliminaries}
In this section, we briefly recall the construction of Kato manifolds along 
with their main properties; for a complete treatment,
we refer the reader to \cite{Kato77}, \cite{Dlo84} and 
\cite{istrati2019classkatomanifolds}.
We conclude by providing formulas for the computation of the Dolbeault 
and de~Rham cohomology of blow-ups.

\subsection{Kato manifolds}

We start by recalling the definition of a proper modification:
\begin{defn}
        A morphism \(\pi : \widehat{X} \to X\) between two equidimensional complex spaces is called a 
\emph{proper modification} if it satisfies the following conditions:
\begin{enumerate}[(i)]
    \item \(\pi\) is proper and surjective;
    \item there exist nowhere dense analytic subsets \(\widehat{E} \subset \widehat{X}\) and \(E \subset X\) such that
    \[
    \pi : \widehat{X} \setminus \widehat{E} \longrightarrow X \setminus E
    \]
    is a biholomorphism, where \(\widehat{E} := \pi^{-1}(E)\) is called the \emph{exceptional space} of the modification.
\end{enumerate}
\end{defn}

We now proceed with the construction of Kato manifolds, highlighting their main properties.\\
Any Kato manifold is constructed starting from a proper holomorphic modification 
\(\pi : \widehat{\mathbb{B}} \to \mathbb{B} \subset \mathbb{C}^n\) at finitely many points 
and a holomorphic embedding \(\sigma : \overline{\mathbb{B}} \hookrightarrow \widehat{\mathbb{B}}\). 
We shall call \((\pi, \sigma)\) a \emph{Kato data} and 
\(\gamma := \pi \circ \sigma : \mathbb{B} \to \mathbb{B}\) the corresponding germ.
Let
\begin{align*}
    W &:= \widehat{\mathbb{B}} \setminus \sigma(\mathbb{B}),\\
\partial_+ W &:= \partial \widehat{\mathbb{B}}, \\
\partial_- W &:= \partial \sigma(\mathbb{B}),
\end{align*}
and define
\begin{align*}
    g : \partial_+ W &\longrightarrow \partial_- W,\\
    g &:= \sigma \circ \pi \big|_{\partial_+ W}.
\end{align*}
Clearly, $g$ extends to a biholomorphism between small neighborhoods of the two components 
of \(\partial W\), and we obtain the Kato manifold
\[
X(\pi, \sigma) := \overline{W} / \sim, 
\]
where  \( x \sim y\) if \(x \in \partial_+ W\) and \(y = g(x) \in \partial_- W\).
It follows directly from the construction that any Kato manifold is compact and that its universal cover is a \(\mathbb{Z}\)-cover.\\
Let \((\pi, \sigma)\) be a Kato data, let \(P \subset \mathbb{B}\) denote the finite set of points modified by \(\pi\), 
and let \(E \subset \widehat{\mathbb{B}}\) denote the exceptional set of \(\pi\). 
If $\sigma(P)$ does not intersect $E$, then the Kato manifold 
$M(\pi, \sigma)$ is a modification of a primary Hopf manifold 
\cite[Proposition~1]{Kato77}. In this case, Theorem~\ref{Hodge decomposition} 
is easier to prove, as the cohomology of primary Hopf manifolds is known 
\cite{mall1991cohomology}, and follows from the same arguments used in 
Section~\ref{finalsection}.
Thus, in all that follows, we will always assume that $\sigma(P) \cap E \neq \emptyset$ 
for a Kato data. In particular, any Kato manifold 
$\tilde{X} = \tilde{X}(\tilde{\pi}, \tilde{\sigma})$ is obtained as a modification at 
finitely many points of a Kato manifold $X = X(\pi, \sigma)$ with $\pi$ a proper 
holomorphic modification at the origin and $\sigma(0) \in E$; see 
\cite[Lemma~1.3,1.4]{istrati2019classkatomanifolds} and 
\cite[Lemme~1.5,2.7, Part~I]{Dlo84}. We can then assume that any Kato manifold is obtained by a proper holomorphic modification $\pi$ at the origin with 
$\sigma(0) \in E$. Otherwise, the same arguments apply verbatim to the general case. As a direct consequence of the Schwarz Lemma \cite[Chapter 1.4, Theorem~6]{Shabat1992}, 
the germ \(\gamma\) is a holomorphic contraction at the origin.

\subsection{Blow-up formulas}\label{blowformulas}

We conclude this section with two theorems on the de~Rham and Dolbeault 
cohomology of blow-ups of complex manifolds. A corollary of the second one 
will play a fundamental role in the proof of Theorem~\ref{Hodge decomposition}. 
We will use these theorems for the direct computation of the Hodge and 
Betti numbers of simple examples of Kato manifolds. 
We note that, in \cite{Voisin2002}, Theorem~\ref{blowupd} is stated for 
compact Kähler manifolds, but the same proof applies to general compact 
complex manifolds.

\begin{thm}[{\cite[Theorem~7.31]{Voisin2002}}]\label{blowupd} 
Let $X$ be a compact complex manifold with $\dim_{\mathbb{C}} X = n$ and 
$Z \subseteq X$ a closed complex submanifold of complex codimension $r \ge 2$. 
Suppose that $\pi : \widetilde{X} \to X$ is the blow-up of $X$ along $Z$. 
Then for any $0 \le k \le 2n$, there is an isomorphism
\[
H_{dR}^k(\widetilde{X})
\;\cong\;
H_{dR}^k(X)
\;\oplus\;
\left(
\bigoplus_{i=1}^{r-1}
H_{dR}^{k-2i}(Z)
\right).
\]
\end{thm}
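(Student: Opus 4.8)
The plan is to prove the blow-up formula for de~Rham cohomology using a combination of the exceptional divisor's geometry and the Mayer--Vietoris (or, more efficiently, the Leray--Hirsch) principle. Set $E = \pi^{-1}(Z) \subseteq \widetilde{X}$, which is the exceptional divisor; the restriction $\pi|_E : E \to Z$ is a $\mathbb{CP}^{r-1}$-bundle, namely the projectivization $\mathbb{P}(N_{Z/X})$ of the normal bundle. Away from $E$, the map $\pi$ is a biholomorphism onto $X \setminus Z$. The first step is to recall that, by Leray--Hirsch applied to the projective bundle $\pi|_E : E \to Z$ together with the hyperplane class $h \in H^2_{dR}(E)$ (the first Chern class of the tautological relative $\mathcal{O}(1)$), one has
\[
H^k_{dR}(E) \;\cong\; \bigoplus_{i=0}^{r-1} H^{k-2i}_{dR}(Z) \cdot h^i,
\]
so that in particular $H^k_{dR}(E) \cong \bigoplus_{i=0}^{r-1} H^{k-2i}_{dR}(Z)$.

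Next I would run the excision/Mayer--Vietoris comparison. Choose a tubular neighborhood $U$ of $Z$ in $X$, so that $\widetilde{U} := \pi^{-1}(U)$ is a tubular neighborhood of $E$ in $\widetilde{X}$; both deformation retract onto $Z$ and $E$ respectively, and $U \setminus Z \cong \widetilde{U} \setminus E$ via $\pi$. Writing the Mayer--Vietoris sequences for the covers $\{U, X \setminus Z\}$ of $X$ and $\{\widetilde{U}, \widetilde{X} \setminus E\}$ of $\widetilde{X}$, and using that $\pi$ identifies the two ``punctured'' pieces as well as the intersections, one gets a long exact sequence of the pair $(\widetilde{X}, \widetilde{X}\setminus E) \to (X, X \setminus Z)$. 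The upshot is a long exact sequence
\[
\cdots \to H^k_{dR}(X) \xrightarrow{\pi^*} H^k_{dR}(\widetilde{X}) \oplus H^k_{dR}(Z) \to H^k_{dR}(E) \to H^{k+1}_{dR}(X) \to \cdots,
\]
or, after identifying the relative cohomologies via the Thom isomorphisms for the respective normal bundles, a sequence comparing $H^*_{dR}(\widetilde{X})$ with $H^*_{dR}(X)$ modulo the ``difference'' $H^*_{dR}(E)$ versus $H^*_{dR}(Z)$.

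The key algebraic input that makes the sequence split is that $\pi^* : H^k_{dR}(Z) \to H^k_{dR}(E)$ is injective (it is a section of $(\pi|_E)_*$, again by Leray--Hirsch), and that $\pi^* : H^k_{dR}(X) \to H^k_{dR}(\widetilde{X})$ is injective with a left inverse given by $\pi_*$ (the Gysin/integration-along-the-fiber map), since $\pi_* \pi^* = \mathrm{id}$ because $\pi$ has degree one. Therefore the connecting maps vanish, the long exact sequence breaks into short exact sequences
\[
0 \to H^k_{dR}(Z) \xrightarrow{\pi^*} H^k_{dR}(E) \to \operatorname{coker}\big(\pi^* : H^k_{dR}(X) \to H^k_{dR}(\widetilde X)\big) \to 0,
\]
and combining this with the Leray--Hirsch computation of $H^k_{dR}(E)$ yields
\[
H^k_{dR}(\widetilde{X}) \;\cong\; H^k_{dR}(X) \oplus \bigoplus_{i=1}^{r-1} H^{k-2i}_{dR}(Z),
\]
the $i=0$ summand of $H^*_{dR}(E)$ being absorbed by $H^*_{dR}(X)$.

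I expect the main obstacle to be bookkeeping the two Mayer--Vietoris sequences and the Thom isomorphisms so that the connecting homomorphisms genuinely cancel — i.e.\ making precise that the ``local'' contribution near $Z$ is exactly the extra $\bigoplus_{i=1}^{r-1} H^{k-2i}_{dR}(Z)$ and nothing more. Since the excerpt tells us this is \cite[Theorem~7.31]{Voisin2002} and that Voisin's proof for the Kähler case carries over verbatim (the argument uses only the bundle structure of $E \to Z$, the degree-one property of $\pi$, and the Leray--Hirsch theorem, none of which require a Kähler form), the cleanest route is simply to cite that proof; a self-contained write-up would reproduce the steps above. Note also the codimension hypothesis $r \ge 2$ is used precisely so that $\mathbb{P}(N_{Z/X})$ has positive fiber dimension, making the extra summands appear and the blow-up nontrivial.
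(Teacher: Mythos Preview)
The paper does not give its own proof of this statement: it simply cites \cite[Theorem~7.31]{Voisin2002} and remarks that Voisin's argument, though stated for compact K\"ahler manifolds, goes through for arbitrary compact complex manifolds. Your sketch is a correct outline of exactly that standard argument (Leray--Hirsch for the projective bundle $E\to Z$, comparison of Mayer--Vietoris sequences, and the splitting provided by $\pi_*\pi^*=\mathrm{id}$), and you rightly observe that none of these ingredients uses a K\"ahler form; so your proposal matches what the paper intends.
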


\begin{thm}[{\cite[Theorem~1.2]{Meng20}}]\label{blowupD} Let $X$ be a complex manifold with $\dim_{\mathbb{C}} X = n$ and 
$Z \subseteq X$ a closed complex submanifold of complex codimension $r \ge 2$. 
Suppose that $\pi : \widetilde{X} \to X$ is the blow-up of $X$ along $Z$. 
Then for any $0 \le p, q \le n$, there is an isomorphism
\[
H_{\bar{\partial}}^{p,q}(\widetilde{X})
\;\cong\;
H_{\bar{\partial}}^{p,q}(X)
\;\oplus\;
\left(
\bigoplus_{i=1}^{r-1}
H_{\bar{\partial}}^{\,p-i,\,q-i}(Z)
\right).
\]
\end{thm}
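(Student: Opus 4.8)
The plan is to combine a projective bundle formula for the Dolbeault cohomology of the exceptional divisor with a Mayer--Vietoris type long exact sequence attached to the blow-up square, and then to read off the asserted decomposition by an explicit splitting; this runs parallel to how one would prove the de~Rham statement of Theorem~\ref{blowupd}. Set $E := \pi^{-1}(Z)$ for the exceptional divisor; it is the projectivized normal bundle $E \cong \mathbb{P}(N_{Z/X})$, the restriction $\tau := \pi|_E : E \to Z$ is a $\mathbb{P}^{r-1}$-bundle, and together with the inclusions $j : E \hookrightarrow \widetilde X$ and $i : Z \hookrightarrow X$ it fits into the commutative square $\pi \circ j = i \circ \tau$. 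Recall also that $N_{E/\widetilde X} \cong \mathcal{O}_E(-1)$, and write $h := c_{1}(\mathcal{O}_E(1)) \in H^{1,1}_{\bar\partial}(E)$.

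\emph{Step 1: a projective bundle formula.} First I would establish
\[
H^{p,q}_{\bar\partial}(E) \;\cong\; \bigoplus_{b=0}^{r-1} h^{b} \cup \tau^{*} H^{\,p-b,\,q-b}_{\bar\partial}(Z),
\]
with the $b=0$ term equal to $\tau^{*}H^{p,q}_{\bar\partial}(Z)$. Sheaf-theoretically this follows by filtering $\Omega^{p}_{E}$ by means of the relative cotangent sequence $0 \to \tau^{*}\Omega^{1}_{Z} \to \Omega^{1}_{E} \to \Omega^{1}_{E/Z} \to 0$, so that the associated graded pieces are $\tau^{*}\Omega^{a}_{Z} \otimes \Omega^{b}_{E/Z}$ with $a+b=p$; applying $R\tau_{*}$ and invoking Bott's formula on the fibre $\mathbb{P}^{r-1}$, which gives $R^{c}\tau_{*}\Omega^{b}_{E/Z} = \mathcal{O}_{Z}$ for $c = b \le r-1$ and $0$ otherwise, one finds that the relevant filtration degenerates, and Leray yields the displayed splitting with these generators.

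\emph{Step 2: the blow-up exact sequence.} The crucial ingredient is the short exact sequence of sheaves on $X$ (equivalently, a distinguished triangle)
\[
0 \longrightarrow \Omega^{p}_{X} \xrightarrow{\pi^{*}} R\pi_{*}\Omega^{p}_{\widetilde X} \oplus i_{*}\Omega^{p}_{Z} \xrightarrow{j^{*}-\tau^{*}} i_{*}R\tau_{*}\Omega^{p}_{E} \longrightarrow 0 ,
\]
in which $\pi_{*}\Omega^{p}_{\widetilde X} = \Omega^{p}_{X}$ (holomorphic forms extend across the exceptional set of a blow-up of a smooth centre, using $\operatorname{codim} Z \ge 2$) and the higher direct images $R^{c}\pi_{*}\Omega^{p}_{\widetilde X}$, $c>0$, are supported on $Z$ and governed by the same Bott-type computation as in Step~1. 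Taking hypercohomology over $X$ and applying the Leray identities $H^{q}(\widetilde X, \Omega^{p}_{\widetilde X}) = \mathbb{H}^{q}(X, R\pi_{*}\Omega^{p}_{\widetilde X})$ (and likewise for $E$ and $Z$) produces a long exact sequence
\[
\cdots \to H^{p,q}_{\bar\partial}(X) \xrightarrow{(\pi^{*},\,i^{*})} H^{p,q}_{\bar\partial}(\widetilde X) \oplus H^{p,q}_{\bar\partial}(Z) \xrightarrow{j^{*}-\tau^{*}} H^{p,q}_{\bar\partial}(E) \xrightarrow{\delta} H^{p,q+1}_{\bar\partial}(X) \to \cdots .
\]
Since $\pi$ is a proper modification, pushforward of currents gives a map $\pi_{*} : H^{p,q}_{\bar\partial}(\widetilde X) \to H^{p,q}_{\bar\partial}(X)$ with $\pi_{*}\pi^{*} = \mathrm{id}$; hence $(\pi^{*},i^{*})$ is injective in every bidegree, so every connecting map $\delta$ vanishes and the long exact sequence breaks into short exact sequences $0 \to H^{p,q}_{\bar\partial}(X) \to H^{p,q}_{\bar\partial}(\widetilde X) \oplus H^{p,q}_{\bar\partial}(Z) \to H^{p,q}_{\bar\partial}(E) \to 0$.

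\emph{Step 3: extracting the decomposition, and the main obstacle.} Using $\pi_{*}\pi^{*} = \mathrm{id}$ I would write $H^{p,q}_{\bar\partial}(\widetilde X) = \pi^{*}H^{p,q}_{\bar\partial}(X) \oplus \ker \pi_{*}$ and identify $\ker \pi_{*}$ with $\bigoplus_{b=1}^{r-1} H^{\,p-b,\,q-b}_{\bar\partial}(Z)$ via the Gysin maps $\beta \mapsto j_{*}(h^{b-1} \cup \tau^{*}\beta)$: these land in $\ker \pi_{*}$ since $\pi_{*}j_{*} = i_{*}\tau_{*}$ and $\tau_{*}(h^{b-1}\cup -) = 0$ for $b \le r-1$; the assignment is injective by the self-intersection formula $j^{*}j_{*}\eta = c_{1}(N_{E/\widetilde X})\cup\eta = -h\cup\eta$ together with the freeness of Step~1; and it surjects onto $\ker \pi_{*}$ because, feeding the short exact sequence of Step~2 through the projective bundle formula, the $b=0$ summand $\tau^{*}H^{p,q}_{\bar\partial}(Z)$ of $H^{p,q}_{\bar\partial}(E)$ is precisely the image of $\{0\}\oplus H^{p,q}_{\bar\partial}(Z)$ under $j^{*}-\tau^{*}$. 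This yields the desired isomorphism $H^{p,q}_{\bar\partial}(\widetilde X) \cong H^{p,q}_{\bar\partial}(X) \oplus \bigoplus_{b=1}^{r-1} H^{\,p-b,\,q-b}_{\bar\partial}(Z)$. The step I expect to be the real obstacle is Step~2: pinning down $R\pi_{*}\Omega^{p}_{\widetilde X}$ and verifying the exactness (and the vanishing of the sheaf-level connecting maps), which is an essentially local question near $Z$; once that input is available the rest is a formal manipulation of the Gysin calculus, requiring only the minor verification that the identities $\pi_{*}\pi^{*} = \mathrm{id}$, $j^{*}j_{*} = -h\cup(-)$ and the projection formula hold for Dolbeault cohomology, which one obtains by working with the quasi-isomorphic complex of currents.
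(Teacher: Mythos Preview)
The paper does not give its own proof of this statement; it is quoted from \cite{Meng20} and used as a known input. So there is no in-paper argument to compare against.

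Your outline is correct and is essentially the route taken in \cite{Meng20} (and, in a variant form, in \cite{rao2018dolbeaultcohomologiesblowingcomplex}). One point deserves sharpening: in Step~1 the phrase ``the relevant filtration degenerates'' is where the real content lives. For a general base $Z$ (not compact, not K\"ahler) the Bott computation of $R^{c}\tau_{*}\Omega^{b}_{E/Z}$ gives the $E_{1}$ page of the filtration spectral sequence but does not by itself force degeneration; that degeneration is precisely the Dolbeault Leray--Hirsch theorem (Meng's Theorem~1.1), and the mechanism is that the global classes $1,h,\ldots,h^{r-1}$ restrict to a basis of the fibre cohomology and thereby split the filtration at the level of cohomology. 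Once Step~1 is secured, your Steps~2 and~3 go through as written: the ``short exact sequence'' in Step~2 is, as you note, really a distinguished triangle, and its existence reduces to a local computation near $Z$ (the cone of $\Omega^{p}_{X}\to R\pi_{*}\Omega^{p}_{\tilde X}$ is supported on $Z$ and identified there via the projective-bundle structure of $E$); the splitting argument in Step~3 via $\pi_{*}\pi^{*}=\mathrm{id}$ on currents and the self-intersection formula $j^{*}j_{*}=-h\cup(\,\cdot\,)$ is the standard endgame. You have correctly flagged Step~2 as the place requiring genuine work.
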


As a direct application of the Weak Factorization Theorem \cite[Theorem 0.3.1]{AKMWo02}, we obtain:

\begin{cor}[{\cite[Corollary~1.4]{rao2018dolbeaultcohomologiesblowingcomplex}}]\label{p00q}
Let $X$ and $\widetilde{X}$ be two bimeromorphically equivalent $n$-dimensional 
compact complex manifolds. Then, for any $0 \le p, q \le n$, the following 
Dolbeault cohomology isomorphisms hold:
\begin{align*}
    H^{p,0}_{\overline{\partial}}(\widetilde{X}) &\cong 
H^{p,0}_{\overline{\partial}}(X),\\ 
H^{0,q}_{\overline{\partial}}(\widetilde{X}) &\cong 
H^{0,q}_{\overline{\partial}}(X).
\end{align*}
\end{cor}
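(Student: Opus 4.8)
The plan is to reduce the statement to the case of a single blow-up along a smooth center and then read off the vanishing of the correction terms in Theorem~\ref{blowupD}. First I would invoke the Weak Factorization Theorem \cite[Theorem~0.3.1]{AKMWo02} (in its bimeromorphic version, valid for compact complex manifolds): any bimeromorphic map $X \dashrightarrow \widetilde{X}$ factors as a finite chain of compact complex manifolds $X = X_0 \dashrightarrow X_1 \dashrightarrow \cdots \dashrightarrow X_N = \widetilde{X}$, where each arrow $X_j \dashrightarrow X_{j+1}$ is either the blow-up or the blow-down of a smooth center of codimension $r \ge 2$. Since a blow-down is the inverse of a blow-up, it suffices to show that $H^{p,0}_{\overline{\partial}}$ and $H^{0,q}_{\overline{\partial}}$ are preserved under a single blow-up $\pi : \widetilde{X} \to X$ along a smooth submanifold $Z$ of codimension $r \ge 2$.

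For this, apply Theorem~\ref{blowupD}. Taking $q=0$, the correction term is $\bigoplus_{i=1}^{r-1} H^{\,p-i,\,-i}_{\overline{\partial}}(Z)$, and every summand vanishes since the antiholomorphic bidegree $-i$ is strictly negative; hence $H^{p,0}_{\overline{\partial}}(\widetilde{X}) \cong H^{p,0}_{\overline{\partial}}(X)$. Symmetrically, taking $p=0$, the correction term is $\bigoplus_{i=1}^{r-1} H^{\,-i,\,q-i}_{\overline{\partial}}(Z) = 0$, so $H^{0,q}_{\overline{\partial}}(\widetilde{X}) \cong H^{0,q}_{\overline{\partial}}(X)$. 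Composing the isomorphisms obtained at each of the $N$ elementary steps of the factorization yields the claimed isomorphisms $H^{p,0}_{\overline{\partial}}(\widetilde{X}) \cong H^{p,0}_{\overline{\partial}}(X)$ and $H^{0,q}_{\overline{\partial}}(\widetilde{X}) \cong H^{0,q}_{\overline{\partial}}(X)$.

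There is essentially no analytic obstacle here once Theorem~\ref{blowupD} and the Weak Factorization Theorem are granted; the composition argument does not even require naturality of the individual isomorphisms, only that each elementary modification preserves the two families of groups up to abstract isomorphism. The one point deserving a word of care is the availability of the factorization theorem for compact complex — not merely projective — manifolds, which is the reason for working in that category, together with the hypothesis that the centers have codimension $\ge 2$, needed for Theorem~\ref{blowupD} to apply.
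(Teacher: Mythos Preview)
Your proposal is correct and matches the paper's approach exactly: the paper introduces this corollary with the single sentence ``As a direct application of the Weak Factorization Theorem \cite[Theorem~0.3.1]{AKMWo02}, we obtain,'' and you have spelled out precisely that application via Theorem~\ref{blowupD}, observing that the correction terms $H^{p-i,q-i}_{\overline{\partial}}(Z)$ vanish for $i\ge 1$ when $p=0$ or $q=0$.
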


\section{Hodge numbers of a Kato manifold}\label{picture}
In this section, we compute the Dolbeault cohomology of a Kato manifold 
\(X = X(\pi,\sigma)\) with proper modification \(\pi : \widehat{\mathbb{B}} \to \mathbb{B}\), 
in terms of the Dolbeault cohomology of \(\widehat{\mathbb{B}}\).
To do this, we cover \(X\) with an open cover 
\(\{U, V\}\) and apply the Mayer--Vietoris sequence for sheaf cohomology 
(see \cite[Chapter IV, Theorem 3.11]{Demaillyag}). The open set \(U\) is given by a global spherical shell, while \(V\) is
biholomorphic to \(\widehat{\mathbb{B}} \setminus \mathbb{D}\), where
\(\mathbb{D}\) is a disk.
The choice of the open sets \(U\) and \(V\) will be described in more detail later.\\

Let \(\mathcal{A}_I\) and \(\mathcal{A}_E\) be the two annuli representing 
the gluing part of the Kato manifold $X$ via the map \(\sigma\pi\). We can assume \[
\mathcal{A}_E = \pi^{-1}\big(\mathbb{B} \setminus \overline{\mathbb{B}}_{1-\epsilon}\big)
\quad \text{with} \quad 0 < \epsilon \ll 1,
\]
and then 
\[
\mathcal{A}_I = \sigma\big(\mathbb{B} \setminus \overline{\mathbb{B}}_{1-\epsilon}\big).
\] 
Let 
$$V=\widehat{\mathbb{B}}\setminus\overline{\big(\sigma(\mathbb{B})\cup \mathcal{A}_E\big)}\cong \widehat{\mathbb{B}}\setminus{\overline{\sigma(\mathbb{B})}},$$
and let \(U\) be a neighborhood of the annuli 
\(\mathcal{A}_I, \mathcal{A}_E\) in \(X\), given by a new annulus \(U\) 
as in Figure \ref{figure}. We choose \(U\) such that \(U_2\) has no intersection 
with the exceptional set \(\pi^{-1}(\{0\})\subset \widehat{\mathbb{B}}\) and \(U_1, U_2\) are disjoint. 
By construction, we have   
\begin{align*}
   U&=U_1\cup \overline{(\mathcal{A}_E\equiv \mathcal{A}_I)}\cup U_2,\\
U\cap V&=U_1\sqcup U_2.
\end{align*}
Moreover, we define \(\mathbb{B}_1\) and \(\mathbb{B}_2\) in 
\(\widehat{\mathbb{B}}\) by 
\begin{align*}
    \mathbb{B}_1&=U_1\sqcup \overline{\sigma(\mathbb{B})},\\
\mathbb{B}_2&=\sigma(\mathbb{B})\setminus \overline{\mathcal{A}_I}.
\end{align*}
By construction, we have 
\begin{equation}
    \begin{split}
        \widehat{\mathbb{B}} \setminus \overline{\mathcal{A}_E} &= \pi^{-1}(\mathbb{B}_{1-\epsilon}),\\
    \mathbb{B}_2 &= \sigma(\mathbb{B}_{1-\epsilon})
    \end{split}
\end{equation}
and hence 
\begin{align}\label{preimageB2}
    \sigma^{-1}(\mathbb{B}_2) = \mathbb{B}_{1-\epsilon}.
\end{align}

\begin{figure}[!t]
    \centering
\begin{tikzpicture}
    \draw (0,0) circle (5); 
    \draw (0,0) circle (4.7); 
    \draw (0,0) circle (4.2); 
    \draw (0,0) circle (2);
    \draw (0,0) circle (1.5);
    \draw (0,0) circle (1.2);
    \draw (0,0) circle (0.7);

    \fill[gray, opacity=0.6, even odd rule] (0,0) circle (5) (0,0) circle (4.7);
    \fill[gray, opacity=0.4, even odd rule] (0,0) circle (4.7) (0,0) circle (4.2);
    \fill[gray, opacity=0.2, even odd rule] (0,0) circle (4.2) (0,0) circle (2);
    \fill[gray, opacity=0.4, even odd rule] (0,0) circle (2) (0,0) circle (1.5) ;
    \fill[gray, opacity=0.6, even odd rule] (0,0) circle (1.5) (0,0) circle (1.2);
    \fill[gray, opacity=0.4, even odd rule] (0,0) circle (1.2) (0,0) circle (0.7);
    \fill[white] (0,0) circle (0.7);



    \node at (0,4.9) {$\mathcal{A}_E$};
    \node at (0,1.4) {$\mathcal{A}_I$};
    \node at (-4.45,0) {$U_2$};
    \node at (-1.75,0) {$U_1$};
    \node at (-0.6,0.2) {$\sigma\pi(U_2)$};
    \node [purple] at (2.8,0.2) {$V$};
    \node [blue] at (3.85,2.5) {$U$};
    \node [blue] at (1.2,0.9) {$U$};
    \node [red] at (0.3,-1.7) {$\mathbb{B}_1$};
    \node [red] at (0.6,-0.5) {$\mathbb{B}_2$};
    \node [red] at (-0.8,-0.8) {$\sigma(\mathbb{B})$};

\draw[purple, thick, dashed] (0:1.5) -- (0:4.7); 
\draw[blue, thick, dashed] (30:1.2) -- (30:2); 
\draw[blue, thick, dashed] (30:4.2) -- (30:5); 
\draw[red, thick, ->] (270:0) -- (270:2); 
\draw[red, thick, ->] (300:0) -- (300:1.2); 
\draw[red, thick, ->] (240:0) -- (240:1.5); 

\end{tikzpicture}
\caption{Construction on $\widehat{\mathbb{B}}$}
\label{figure}
\end{figure}
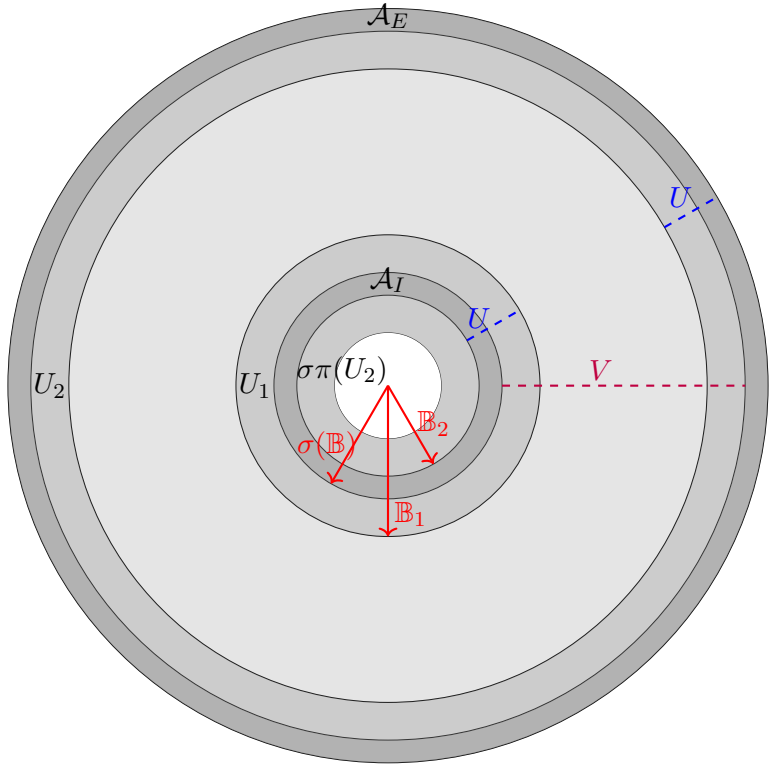

Before analyzing the Mayer--Vietoris sequence, we recall the well-known
Cartan's Theorem~B on the vanishing of sheaf cohomology for coherent analytic
sheaves on Stein manifolds; see \cite{CartanB} for reference.  
We then prove two technical lemmas. The first concerns the
sheaf cohomology of an annulus, while the second, which follows as a consequence
of the first, relates the cohomology of \(V\) to that of
\(\widehat{\mathbb{B}}\).

\begin{thm}[Cartan's Theorem B]
    Let $X$ be a Stein space and $\mathcal{F}$ a coherent analytic sheaf on $X$. 
Then for any $q > 0$ we have $H^{q}(X,\mathcal{F}) = 0$.
\end{thm}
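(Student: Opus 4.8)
The plan is to reduce the statement, in stages, to its analytic core --- solvability of the $\bar\partial$-equation on a Stein manifold --- and then to propagate it first to all coherent sheaves and then to singular Stein spaces. I would start with a Stein \emph{manifold} $X$ and $\mathcal{F} = \mathcal{O}$. Fixing a smooth strictly plurisubharmonic exhaustion $\varphi$ on $X$ and using the Dolbeault isomorphism $H^q(X,\mathcal{O}) \cong H^{0,q}_{\bar\partial}(X)$, it suffices to show that every $\bar\partial$-closed $(0,q)$-form with $q \ge 1$ is $\bar\partial$-exact; this follows from H\"ormander's $L^2$-existence theorem, choosing a convex increasing $\chi$ so that in the weight $e^{-\chi\circ\varphi}$ the positive curvature term produced by $\chi''$ dominates the datum and the error terms, which yields $\bar\partial u = f$ with $u$ locally square-integrable. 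The same weighted estimates carried out with values in a Hermitian holomorphic vector bundle $E$ give $H^q(X,\mathcal{O}(E)) = 0$ for $q \ge 1$, i.e.\ the vanishing for all \emph{locally free} coherent sheaves. (Alternatively one can run the classical Cartan--Oka proof: establish Theorems A and B first for closed polydiscs in $\mathbb{C}^n$ by induction on dimension through a Cousin-type splitting of cocycles, extend to analytic polyhedra by Oka--Weil approximation, and reach an arbitrary Stein manifold via an exhaustion by such polyhedra together with a Mittag--Leffler limiting argument.)

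Next I would upgrade, still on a Stein manifold, from locally free sheaves to an arbitrary coherent $\mathcal{F}$, running this simultaneously with Cartan's Theorem A (finitely many global sections generate $\mathcal{F}$ over relatively compact Stein subsets). Over a suitable $\Omega \Subset X$ the sheaf $\mathcal{F}$ admits a \emph{finite} free resolution $0 \to \mathcal{O}^{\oplus p_m} \to \cdots \to \mathcal{O}^{\oplus p_0} \to \mathcal{F}|_\Omega \to 0$: it terminates because the local rings $\mathcal{O}_{X,x}$ are regular of dimension $n$ (Hilbert's syzygy theorem), and its terms can be taken \emph{globally} free precisely because Theorems A and B are already available for the syzygy sheaves appearing at earlier stages of the induction. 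Splitting this into short exact sequences $0 \to \mathcal{K}_j \to \mathcal{O}^{\oplus p_{j-1}} \to \mathcal{K}_{j-1} \to 0$ and chasing the long exact cohomology sequences, starting from the vanishing for free sheaves, gives $H^q(\Omega,\mathcal{F}) = 0$ for $q \ge 1$; one removes the restriction to $\Omega$ by exhausting $X$ by Runge--Stein opens $\Omega_j \Subset \Omega_{j+1}$ and passing to the inverse limit, the relevant $\varprojlim^1$-term vanishing thanks to the Runge approximation property. Finally, for a (finite-dimensional) Stein \emph{space} $X$ one reduces to the manifold case by embedding $i : X \hookrightarrow \mathbb{C}^N$ properly as a closed analytic subspace: since $i$ is a closed immersion, $i_*$ is exact, so $H^q(X,\mathcal{F}) \cong H^q(\mathbb{C}^N, i_*\mathcal{F})$, and $i_*\mathcal{F}$ is coherent on $\mathbb{C}^N$ because the ideal sheaf of $X$ is coherent (Oka) and $i$ is finite; applying the previous step on the Stein manifold $\mathbb{C}^N$ concludes the proof.

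The step I expect to be the main obstacle is the passage from $\mathcal{O}$ to arbitrary coherent sheaves. The homological bookkeeping is routine, but the real content is the existence of \emph{global, finite, free} resolutions over the exhausting Stein opens: such resolutions cannot be produced before Theorems A and B, yet they are needed to prove them, so everything must be set up as a single induction in which the statements for lower-complexity coherent sheaves feed the resolution used at the next level --- and one must choose the family of exhausting Stein opens uniformly enough for this to go through. The $\bar\partial$-step, by contrast, is a clean application of the $L^2$-estimate once the plurisubharmonic exhaustion is fixed.
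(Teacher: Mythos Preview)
The paper does not prove Cartan's Theorem~B at all: it merely states the result and cites Grauert--Remmert as a reference, since in this paper the theorem is only used as a standard input to the Mayer--Vietoris computations. So there is no ``paper's own proof'' to compare against; your outline is an independent attempt at a well-known classical theorem.

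As an outline of a proof, what you wrote is largely a reasonable roadmap combining H\"ormander's $L^2$ method for the analytic core with the syzygy and exhaustion machinery for the passage to general coherent sheaves. There is, however, one genuine circularity in your final step. You reduce from a Stein \emph{space} to a Stein \emph{manifold} by invoking a proper closed embedding $i : X \hookrightarrow \mathbb{C}^N$. This is Remmert's embedding theorem, and its proof \emph{uses} Theorems~A and~B on Stein spaces (one needs enough global holomorphic functions to separate points, give local embeddings, and guarantee properness). So you cannot appeal to it at this stage without begging the question. The standard way out, followed in Grauert--Remmert, is to work directly on the Stein space: one only needs \emph{local} embeddings of the analytic space into open sets of $\mathbb{C}^N$ (which are built into the definition of a complex space), pushes $\mathcal{F}$ forward locally to a coherent sheaf on a Stein open in $\mathbb{C}^N$, and then runs the exhaustion by analytic polyhedra and the Mittag--Leffler limiting argument intrinsically on $X$. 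Once Theorems~A and~B are in hand, the global embedding follows as a corollary, not the other way around.

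Your identification of the simultaneous induction (Theorem~A together with Theorem~B, feeding each other through global free resolutions over relatively compact Stein opens) as the delicate point is accurate; that, and not the $\bar\partial$-step, is indeed where the substance lies.
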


\begin{lem}\label{annulus}
Let \(\mathcal{A}\) be an annulus in \(\mathbb{C}^n\), with \(n \ge 3\).
Then, for any \(p\), one has
\begin{equation*}
H^q(\mathcal{A}, \Omega^p) = 0 \quad \forall\, 1 \le q \le n-2.
\end{equation*}
\end{lem}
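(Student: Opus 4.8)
The plan is to analyze a spherical shell through its natural holomorphic fibration over projective space. Since an annulus in $\mathbb{C}^n$ is biholomorphic to a round shell, I may assume $\mathcal{A}=\{z\in\mathbb{C}^n : r<|z|<R\}$ for some $0<r<R$, so that $\mathcal{A}\subset\mathbb{C}^n\setminus\{0\}$. Restricting the tautological bundle $\mathcal{O}_{\mathbb{P}^{n-1}}(-1)$ — whose total space minus the zero section is $\mathbb{C}^n\setminus\{0\}$ — to the radii between $r$ and $R$ exhibits a holomorphic submersion $f\colon\mathcal{A}\to\mathbb{P}^{n-1}$ whose fibres are planar annuli, hence Stein. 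First I would record that $R^jf_*\mathcal{G}=0$ for $j\ge 1$ whenever $\mathcal{G}$ is the pull-back of a locally free sheaf (the fibres are one-dimensional and Stein; equivalently one can bypass this point by the Mayer--Vietoris argument described below), so that the Leray spectral sequence of $f$ degenerates and $H^q(\mathcal{A},\mathcal{F})\cong H^q(\mathbb{P}^{n-1},f_*\mathcal{F})$ for such $\mathcal{F}$. Expanding holomorphic functions on $f^{-1}(U)$ in Laurent series along the fibre identifies $f_*\mathcal{O}_{\mathcal{A}}$ with the (completed) sum $\bigoplus_{m\in\mathbb{Z}}\mathcal{O}_{\mathbb{P}^{n-1}}(m)$ of all integer twists.

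Next I would run a dévissage in $p$. The vertical tangent bundle of $f$ is $T_{\mathcal{A}/\mathbb{P}^{n-1}}\cong f^*\mathcal{O}_{\mathbb{P}^{n-1}}(-1)$, so the relative cotangent sequence, being of rank one on the right, produces after taking exterior powers a short exact sequence
\[
0\longrightarrow f^*\Omega^p_{\mathbb{P}^{n-1}}\longrightarrow\Omega^p_{\mathcal{A}}\longrightarrow f^*\bigl(\Omega^{p-1}_{\mathbb{P}^{n-1}}(1)\bigr)\longrightarrow 0.
\]
Pushing this forward (exact, by the vanishing of the higher direct images) and using the projection formula, then passing to cohomology on $\mathbb{P}^{n-1}$, yields a long exact sequence whose terms are the (completed) sums over $m\in\mathbb{Z}$ of $H^q(\mathbb{P}^{n-1},\Omega^p_{\mathbb{P}^{n-1}}(m))$ and of $H^q(\mathbb{P}^{n-1},\Omega^{p-1}_{\mathbb{P}^{n-1}}(m+1))$, connected by a boundary map $\delta$. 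Now Bott's formula on $\mathbb{P}^{n-1}$ gives, for $1\le q\le n-2$, that $H^q(\mathbb{P}^{n-1},\Omega^a_{\mathbb{P}^{n-1}}(k))=0$ unless $q=a$ and $k=0$, in which case the group is one-dimensional; this is exactly where the hypothesis $n\ge 3$ enters, since it keeps the exceptional degrees $q\in\{0,n-1\}$ of Bott's formula out of the asserted range. Hence in this range the long exact sequence sandwiches $H^q(\mathcal{A},\Omega^p)$ between the cokernel and the kernel of the single map $\delta\colon H^{p-1}(\mathbb{P}^{n-1},\Omega^{p-1}_{\mathbb{P}^{n-1}})\to H^{p}(\mathbb{P}^{n-1},\Omega^{p}_{\mathbb{P}^{n-1}})$ (the cokernel being relevant when $q=p$ and the kernel when $q=p-1$; for all other $q$ in the range both bounding terms already vanish). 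Finally I would identify $\delta$ with the connecting map of the Atiyah extension of $\mathcal{O}_{\mathbb{P}^{n-1}}(1)$, i.e.\ with cup product by the hyperplane class $h\in H^1(\mathbb{P}^{n-1},\Omega^1_{\mathbb{P}^{n-1}})$; under $H^a(\mathbb{P}^{n-1},\Omega^a_{\mathbb{P}^{n-1}})=\mathbb{C}\cdot h^a$ this is $h^{p-1}\mapsto h^p$, an isomorphism because $p\le n-1$ in the relevant range. Therefore both bounding terms vanish and $H^q(\mathcal{A},\Omega^p)=0$ for all $1\le q\le n-2$.

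The alternative, more in the spirit of the Mayer--Vietoris technique of this paper, is to write $\mathbb{C}^n\setminus\{0\}=(\mathbb{B}_R\setminus\{0\})\cup(\mathbb{C}^n\setminus\overline{\mathbb{B}_r})$ with intersection $\mathcal{A}$. The cohomology of $\mathbb{B}_R\setminus\{0\}$ and of $\mathbb{C}^n\setminus\{0\}$ in degrees $1\le q\le n-2$ vanishes by the local cohomology isomorphism $H^q(Y\setminus\{0\},\Omega^p)\cong H^{q+1}_{\{0\}}(Y,\Omega^p)=0$, valid since $\Omega^p$ is locally free, hence of depth $n$, together with Cartan B on $Y$. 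The Mayer--Vietoris sequence then squeezes $H^q(\mathcal{A},\Omega^p)$ between $H^q(\mathbb{C}^n\setminus\overline{\mathbb{B}_r},\Omega^p)$ and a subgroup of $H^{q+1}(\mathbb{C}^n\setminus\{0\},\Omega^p)$, the latter being zero for $q\le n-3$; the remaining interaction at $q=n-2$ with the non-zero group $H^{n-1}(\mathbb{C}^n\setminus\{0\},\Omega^p)$ is disposed of by checking that the relevant restriction map is injective (both sides being computed, via the fibration, by the same line-bundle cohomology on $\mathbb{P}^{n-1}$).

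The main obstacle is the single genuinely analytic input left open by this reduction: the vanishing $H^q(\mathbb{C}^n\setminus\overline{\mathbb{B}_r},\Omega^p)=0$ for $1\le q\le n-2$. This is a classical fact — the inner sphere $\{|z|=r\}$ is strictly pseudoconcave when approached from the outside, so the domain is $(n-1)$-concave in the sense of Andreotti--Grauert and the corresponding vanishing theorem applies; it can also be obtained directly from Hörmander-type $L^2$ estimates for $\bar\partial$ with a weight adapted to the pseudoconcave end. In the fibration approach the same point reappears as the assertion $R^{\ge 1}f_*=0$, which fails to follow formally from Leray's theorem because $f$ is not a Stein morphism (the preimage of a Stein open in $\mathbb{P}^{n-1}$ is again a shell); everything else in the argument — the dévissage, Bott's formula, and the cup-product identification — is formal once this vanishing is in hand.
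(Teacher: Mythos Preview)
Your two routes both correctly isolate the essential analytic input --- the vanishing $H^q(\mathbb{C}^n\setminus\overline{\mathbb{B}_r},\Omega^p)=0$ for $1\le q\le n-2$ --- but neither actually establishes it, and the paper's proof dispatches precisely this point by a device you did not find. The paper's reduction is also tighter than your second approach: covering $\mathbb{C}^n$ (rather than $\mathbb{C}^n\setminus\{0\}$) by $\mathbb{B}_2$ and $\mathbb{C}^n\setminus\overline{\mathbb{B}}$ makes one piece Stein, so Mayer--Vietoris gives $H^q(\mathcal{A},\Omega^p)\cong H^q(\mathbb{C}^n\setminus\overline{\mathbb{B}},\Omega^p)$ for every $q\ge 1$ in one line, with no local-cohomology computation for punctured balls and no separate treatment of $q=n-2$.

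The genuine gap is at the endgame. For a $q$-concave manifold the Andreotti--Grauert theorem yields only \emph{finite dimensionality} of cohomology in the relevant range, not vanishing; there is no ``corresponding vanishing theorem'' in the concave case. The paper closes this by pairing Andreotti--Grauert finiteness with Laufer's theorem that on any open subset of a Stein manifold the cohomology of a coherent sheaf is either zero or infinite-dimensional; together these force the vanishing. Your fibration approach, while pleasant in its use of Bott's formula and the identification of the connecting map with cup product by the hyperplane class, hits the same wall in disguise: as you note, $f^{-1}(U)$ for $U$ Stein is again a shell-type domain (its inner boundary is not pseudoconvex), so neither Cartan~B nor fibrewise Steinness justifies $R^{\ge 1}f_*=0$, and the Leray degeneration is left hanging on exactly the input you were trying to prove.
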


\begin{proof}
First, we assume that the annulus \(\mathcal{A}\) is given by 
\(\mathbb{B}_2 \setminus \overline{\mathbb{B}}\), where here $\mathbb{B}_2$ is the ball of radius two centered at the origin.
For any \(q \ge 1\) we have the isomorphism:
\[
H^q(\mathbb{B}_2 \setminus \overline{\mathbb{B}}, \Omega^p)
\cong
H^q(\mathbb{C}^n \setminus \overline{\mathbb{B}}, \Omega^p).
\]
This is a direct consequence of the Mayer--Vietoris sequence applied to
\[
\mathbb{C}^n = (\mathbb{C}^n \setminus \overline{\mathbb{B}}) \cup \mathbb{B}_2.
\]
Next, we observe that \(\mathbb{C}^n \setminus \overline{\mathbb{B}}\) is a \(1\)-concave manifold.
This follows, for instance, by considering the function
\(\rho(z) = |z|^2\),
which is strictly \(1\)-pseudoconvex on \(\mathbb{C}^n \setminus \overline{\mathbb{B}}\)
and takes values greater than \(1=\rho_{|_{\partial(\mathbb{C}^n \setminus \overline{\mathbb{B}})}}\) there.
By \cite[Theorem 11]{AG62}, we obtain
\begin{equation}
\dim H^q(\mathbb{C}^n \setminus \overline{\mathbb{B}}, \Omega^p) < +\infty
\quad \forall\, q < n-1.
\end{equation}
As proved in~\cite{Laufer}, since
\(\mathbb{C}^n \setminus \overline{\mathbb{B}}\)
is an open subset of a Stein manifold, the groups
\(H^q(\mathbb{C}^n \setminus \overline{\mathbb{B}}, \Omega^p)\)
are either zero or infinite-dimensional.  
This completes the proof.
\end{proof}

\begin{lem}\label{V}
   For any \(p\) and for any \(1\le q\le n-2 \) one has
\begin{equation*}
H^q(V, \Omega^p) \cong H^q(\widehat{\mathbb{B}},\Omega^p).\end{equation*}
\end{lem}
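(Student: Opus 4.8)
The plan is to use a Mayer--Vietoris argument to compare the sheaf cohomology of $V$ with that of $\widehat{\mathbb{B}}$, using Lemma \ref{annulus} to kill the overlap contributions. Recall that $V \cong \widehat{\mathbb{B}} \setminus \overline{\sigma(\mathbb{B})}$ and that $\overline{\sigma(\mathbb{B})}$ is contained in a coordinate ball of $\widehat{\mathbb{B}}$ on which $\pi$ is biholomorphic onto its image away from the origin; more precisely, $\widehat{\mathbb{B}} \setminus \overline{\mathcal{A}_E} = \pi^{-1}(\mathbb{B}_{1-\epsilon})$, so I may fix a slightly larger ball $\mathbb{B}'$ (radius strictly between $1-\epsilon$ and $1$) whose preimage $\widehat{\mathbb{B}}' := \pi^{-1}(\mathbb{B}')$ contains $\overline{\sigma(\mathbb{B})}$ and is relatively compact in $\widehat{\mathbb{B}}$. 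First I would write the open cover
\[
\widehat{\mathbb{B}} = V \cup \widehat{\mathbb{B}}',
\]
with intersection $V \cap \widehat{\mathbb{B}}' = \widehat{\mathbb{B}}' \setminus \overline{\sigma(\mathbb{B})}$. Since $\overline{\sigma(\mathbb{B})}$ is a compact set with smooth boundary sitting inside the ball $\widehat{\mathbb{B}}'$, and since $\sigma$ is an embedding, $\sigma^{-1}$ identifies a neighborhood of $\partial \sigma(\mathbb{B})$ with a neighborhood of $\partial \mathbb{B}$ in $\mathbb{C}^n$; hence $V \cap \widehat{\mathbb{B}}'$ deformation retracts onto (and, after shrinking, is biholomorphic to) an annular region which, away from the exceptional set, is a genuine annulus in $\mathbb{C}^n$. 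In fact, by choosing $\widehat{\mathbb{B}}'$ so that its outer boundary avoids the exceptional fibre (exactly as $U_2$ was chosen to miss $\pi^{-1}(0)$), the set $V \cap \widehat{\mathbb{B}}'$ is biholomorphic to an annulus $\mathcal{A}$ in $\mathbb{C}^n$.

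The Mayer--Vietoris sequence for the sheaf $\Omega^p$ associated to this cover reads
\[
\cdots \to H^q(\widehat{\mathbb{B}}, \Omega^p) \to H^q(V,\Omega^p) \oplus H^q(\widehat{\mathbb{B}}', \Omega^p) \to H^q(V \cap \widehat{\mathbb{B}}', \Omega^p) \to H^{q+1}(\widehat{\mathbb{B}},\Omega^p) \to \cdots.
\]
Now $\widehat{\mathbb{B}}'$ is a proper modification at the origin of the Stein ball $\mathbb{B}'$; in particular it is holomorphically convex with Stein reduction $\mathbb{B}'$, so by Cartan's Theorem B applied on $\mathbb{B}'$ together with the fact that the higher direct images $R^j\pi_*\Omega^p$ are coherent and supported on the finite set $\{0\}$ (hence have vanishing higher cohomology), the Leray spectral sequence gives $H^q(\widehat{\mathbb{B}}', \Omega^p) = 0$ for all $q \ge 1$. (Alternatively, one may quote that a modification at finitely many points of a Stein space has vanishing positive-degree cohomology for coherent sheaves.) By Lemma \ref{annulus}, $H^q(V \cap \widehat{\mathbb{B}}', \Omega^p) = H^q(\mathcal{A}, \Omega^p) = 0$ for $1 \le q \le n-2$, and also $H^{q+1}(\mathcal{A},\Omega^p) = 0$ in the relevant range. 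Plugging these vanishings into the exact sequence, for $1 \le q \le n-2$ the map
\[
H^q(\widehat{\mathbb{B}}, \Omega^p) \longrightarrow H^q(V, \Omega^p)
\]
becomes an isomorphism, which is the claim.

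The main obstacle is the careful geometric setup: I must choose the auxiliary ball $\widehat{\mathbb{B}}'$ so that simultaneously (a) it contains $\overline{\sigma(\mathbb{B})}$, (b) its outer boundary sphere pulls back to avoid the exceptional fibre of $\pi$, so that $V \cap \widehat{\mathbb{B}}'$ is genuinely an annulus in $\mathbb{C}^n$ rather than something containing part of the exceptional divisor, and (c) it is relatively compact in $\widehat{\mathbb{B}}$ so that the cover is by open sets of $\widehat{\mathbb{B}}$. This is exactly the role played by the sets $U_1, U_2$ and the condition ``$U_2$ has no intersection with $\pi^{-1}(\{0\})$'' in the construction around Figure \ref{figure}, so the geometry has essentially already been arranged; the remaining point is the vanishing $H^q(\widehat{\mathbb{B}}',\Omega^p)=0$ for $q\ge 1$, which follows from Cartan's Theorem B via the Leray spectral sequence for the modification $\pi : \widehat{\mathbb{B}}' \to \mathbb{B}'$ since the exceptional set is finite.
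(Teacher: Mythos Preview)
Your argument has a genuine gap: the open set you use in the cover is the wrong one, and both vanishing claims you make about it are false.

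You take $\widehat{\mathbb{B}}' = \pi^{-1}(\mathbb{B}')$ for a ball $\mathbb{B}'$ of radius close to $1$. Since $0 \in \mathbb{B}'$, the entire exceptional divisor $E = \pi^{-1}(0)$ sits inside $\widehat{\mathbb{B}}'$. Consequently $V \cap \widehat{\mathbb{B}}' = \widehat{\mathbb{B}}' \setminus \overline{\sigma(\mathbb{B})}$ still contains $E \setminus \overline{\sigma(\mathbb{B})}$, which is almost all of the exceptional divisor; this is certainly not biholomorphic to an annulus in $\mathbb{C}^n$, and Lemma~\ref{annulus} does not apply to it. Your remark about arranging the outer boundary to avoid $\pi^{-1}(0)$ is true but irrelevant: the issue is the interior, not the boundary.

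The second claim, that $H^q(\widehat{\mathbb{B}}', \Omega^p) = 0$ for $q \ge 1$, is also incorrect. The Leray spectral sequence for $\pi : \widehat{\mathbb{B}}' \to \mathbb{B}'$ does degenerate (since $\mathbb{B}'$ is Stein), but it yields
\[
H^q(\widehat{\mathbb{B}}', \Omega^p) \cong H^0(\mathbb{B}', R^q\pi_*\Omega^p),
\]
i.e.\ the stalk at $0$ of the skyscraper sheaf $R^q\pi_*\Omega^p$, and this is generally nonzero. Indeed, $\widehat{\mathbb{B}}'$ has the same cohomology as $\widehat{\mathbb{B}}$ itself; the whole point of the lemma is that $H^q(\widehat{\mathbb{B}},\Omega^p)$ can be nontrivial, and this is exactly what feeds into Theorem~\ref{Hodge numbers}. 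Your parenthetical ``modification at finitely many points of a Stein space has vanishing positive-degree cohomology'' is simply false for $\Omega^p$.

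The fix is to use a \emph{small} ball rather than a large one: take $U$ to be a neighbourhood of $\overline{\sigma(\mathbb{B})}$ biholomorphic to a ball (for instance $\sigma(\mathbb{B}_{1+\delta})$, obtained by extending the embedding $\sigma$ slightly). Then $U$ is Stein, so $H^q(U,\Omega^p)=0$ for $q>0$ by Cartan~B, and $U \cap V = U \setminus \overline{\sigma(\mathbb{B})}$ is an honest annulus to which Lemma~\ref{annulus} applies. With Hartogs to handle surjectivity at the $q=0$ level, the Mayer--Vietoris sequence then gives the isomorphism directly. This is what the paper does.
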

\begin{proof}
Let $U$ be (biholomorphic to) a small ball containing $\overline{\sigma(\mathbb{B})}$. 
We compute the cohomology of $V$ using the Mayer--Vietoris sequence applied to 
the open cover $\{U, V\}$ of $\widehat{\mathbb{B}}$. Since
\(U \cap V\) is an annulus, Hartogs' extension theorem (see \cite[Chapter I, 3.28]{Demaillyag}) implies
\(\Omega^p(U \cap V) \cong \Omega^p(U)\). Hence, in the Mayer--Vietoris sequence associated with the open cover $\{U, V\}$, 
the map 
$$\Omega^p(U)\oplus\Omega^p(V)\rightarrow\Omega^p(U\cap V)$$
is surjective. Moreover, \(U\) is Stein, and therefore we have
\(H^q(U, \Omega^p) = 0\) for any \(q > 0\). 
By Lemma~\ref{annulus}, the Mayer--Vietoris sequence gives the desired isomorphisms:
$$0\rightarrow H^1(\widehat{\mathbb{B}},\Omega^p)\rightarrow  H^1(V,\Omega^p)\rightarrow 0,$$
$$H^{q-1}(U\cap V,\Omega^p)= 0\rightarrow H^q(\widehat{\mathbb{B}},\Omega^p)\rightarrow H^q(V,\Omega^p)\rightarrow 0= H^q(U\cap V,\Omega^p)$$
for any \(p\) and for any $2\le q\le n-2$ (in the case $n-2\ge 2$). 
\end{proof}

At this point, we can study the Mayer--Vietoris sequence coming from the open cover $\{U,V\}$ of $X$. From Lemma~\ref{annulus} and the fact that \(U\) is Stein, the 
Mayer--Vietoris sequence for \(X\) yields
\begin{equation}\label{cn2}
    \begin{split}
0\rightarrow\Omega^p(X)\rightarrow&\Omega^p(U)\oplus\Omega^p(V)\xrightarrow{\beta}\Omega^p(U_1)\oplus\Omega^p(U_2)\\
&\rightarrow H^1(X,\Omega^p)\rightarrow H^1(V,\Omega^p)\overset{\ref{V}}{\cong}H^1(\widehat{\mathbb{B}},\Omega^p)\rightarrow 0,
    \end{split}
\end{equation}
and for any $2\le q\le n-2$ (in the case $n-2\ge 2$):
\begin{equation}\label{cn3}
    \begin{split}
         H^{q-1}(U_1,\Omega^p)\oplus H^{q-1}(&U_2,\Omega^p)= 0 \rightarrow H^q(X,\Omega^p)\rightarrow H^q(V,\Omega^p)\\
    \rightarrow &H^q(U_1,\Omega^p)\oplus H^q(U_2,\Omega^p)= 0.
    \end{split}
\end{equation}
It follows from \eqref{cn3}, Lemma \ref{V}, and Dolbeault's theorem that for \(2\le q\le n-2\) (in the case $n-2\ge 2$),
\begin{equation}\label{cn1}
    \begin{split}
         H^{p,q}(X)&\cong H^q(X,\Omega^p)\\
    &\overset{(\ref{cn3})}{\cong}H^q(V,\Omega^p)\\
    &\overset{\ref{V}}{\cong}H^q(\widehat{\mathbb{B}},\Omega^p)\cong H^{p,q}(\widehat{\mathbb{B}}).
    \end{split}
\end{equation}

The following theorem follows from the results of Subsection~\ref{rewriting} and Section~\ref{isomorhism}.
\begin{thm}\label{betainfirstsection}
    The map $\beta$ in the Mayer--Vietoris sequence (\ref{cn2}) is an isomorphism for any $1\le p \le n$, whereas for $p=0$ its image has codimension one.
\end{thm}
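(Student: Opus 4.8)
The goal is to understand the map
\[
\beta : \Omega^p(U)\oplus\Omega^p(V)\longrightarrow \Omega^p(U_1)\oplus\Omega^p(U_2),
\qquad \beta(\alpha,\eta) = (\alpha|_{U_1}-\eta|_{U_1},\ \alpha|_{U_2}-\eta|_{U_2}).
\]
The plan is to exploit the very explicit geometry of the cover. The set \(U\) is an annulus (a global spherical shell), and \(U_1\), \(U_2\) are the two ``ends'' of \(U\) where it overlaps \(V\); crucially \(U_2\) lies away from the exceptional set \(\pi^{-1}(0)\), so on \(U_2\) the modification \(\pi\) is a biholomorphism, while \(U_1\) is glued, via \(\sigma\pi\), to a neighbourhood \(\sigma\pi(U_2)\) sitting inside \(\sigma(\mathbb B)\subset V\). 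First I would make this gluing precise: a section in \(\Omega^p(U_1)\) is, after transport by the contraction germ \(\gamma=\pi\sigma\), the same datum as a section on a thin shell deep inside \(\mathbb B\), and by Hartogs (since \(n\ge 2\)) it extends holomorphically across the origin, hence extends to all of \(\mathbb B\), hence pulls back along \(\sigma\) to a section of \(\Omega^p(V)\) on a neighbourhood of \(\overline{\sigma(\mathbb B)}\). Symmetrically, both \(U_1\) and \(U_2\) are annuli in \(\mathbb C^n\), so \(\Omega^p(U_i)\cong\Omega^p(\text{ball})\) by Hartogs again.

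Second, I would prove surjectivity of \(\beta\) for \(1\le p\le n\) together with the codimension-one statement for \(p=0\). Given \((\xi_1,\xi_2)\in\Omega^p(U_1)\oplus\Omega^p(U_2)\): using that \(U_2\) avoids the exceptional divisor, \(\xi_2\) extends (Hartogs) to a holomorphic \(p\)-form on a neighbourhood of \(\overline{\sigma(\mathbb B)}\cup U_2\), which is a piece of \(V\); subtract its restriction, so we may assume \(\xi_2=0\). Then we must hit \((\xi_1,0)\) by a pair \((\alpha,\eta)\) with \(\alpha\in\Omega^p(U)\), \(\eta\in\Omega^p(V)\), \(\alpha|_{U_2}=\eta|_{U_2}\) and \(\alpha|_{U_1}-\eta|_{U_1}=\xi_1\). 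Here the mechanism of Subsection~\ref{rewriting} (which I am allowed to invoke) should identify this with a Laurent-type problem on an annulus: the obstruction to writing a holomorphic form on an annulus as (form extending over the inner ball) minus (form extending over the outer complement) is measured by the negative Laurent part, i.e. by \(H^{n-1}_{\bar\partial}\) of the complement of a ball, which for \(\Omega^p\) with \(p\ge 1\) contributes nothing relevant to \(\beta\)'s cokernel but for \(p=0\) leaves exactly the one-dimensional space spanned by the locally-constant function — precisely the constants, giving codimension one. The key input making all the positive-\(p\) contributions vanish (or cancel through the gluing by \(\gamma\)) is that \(\gamma\) is a contraction, so pulling back by iterates of \(\gamma\) kills any would-be obstruction; this is the content I would defer to Section~\ref{isomorhism}.

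Third, injectivity of \(\beta\) for \(1\le p\le n\): if \(\beta(\alpha,\eta)=0\) then \(\alpha\) and \(\eta\) agree on \(U\cap V=U_1\sqcup U_2\) and hence glue to a global holomorphic \(p\)-form on \(X=U\cup V\); but \(X\) is a Kato manifold, whose universal cover is a \(\mathbb Z\)-cover with deck transformation induced by the contraction \(\gamma\), and a holomorphic \(p\)-form invariant under a contraction must vanish (pull back repeatedly to the fixed point; for \(p\ge 1\) the pullback by \(\gamma^k\) tends to zero), so \(\Omega^p(X)=0\) for \(p\ge1\) and the exact sequence \eqref{cn2} forces \(\beta\) injective. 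For \(p=0\) one has \(\Omega^0(X)=\mathbb C\) (global holomorphic functions on a compact connected manifold), matching the codimension-one claim: the sequence \(0\to\mathbb C\to\Omega^0(U)\oplus\Omega^0(V)\xrightarrow{\beta}\Omega^0(U_1)\oplus\Omega^0(U_2)\) shows \(\operatorname{coker}\beta\) has the same dimension as \(H^1(X,\mathcal O_X)\cong H^{0,1}(X)\) minus \(H^1(\widehat{\mathbb B},\mathcal O)\); identifying this correction as exactly one dimension is where the detailed Laurent-series bookkeeping of Subsection~\ref{rewriting} is needed.

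The main obstacle is the surjectivity argument for \(p\ge 1\), equivalently showing the potential obstruction group is killed: one must show that a holomorphic \(p\)-form on the shell \(U_1\), once transported by the contraction \(\gamma\) arbitrarily deep toward the origin and extended by Hartogs, genuinely patches to a global section on \(V\) compatibly with a section on \(U\) — in effect that no ``cohomology of the annulus in the bad degree \(n-1\)'' survives the gluing. Making this rigorous requires the careful rewriting of \(\beta\) in coordinates adapted to \(\gamma\), which is why I would structure the paper so that Theorem~\ref{betainfirstsection} is a formal consequence of the coordinate description in Subsection~\ref{rewriting} and the contraction estimates in Section~\ref{isomorhism}, rather than proved here directly.
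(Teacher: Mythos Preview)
Your overall architecture is exactly the paper's: Theorem~\ref{betainfirstsection} is stated with no local proof and is deduced from Subsection~\ref{rewriting} (rewriting $\beta$) together with Section~\ref{isomorhism} (contraction estimates). Your injectivity argument for $p\ge 1$ via $\ker\beta\cong\Omega^p(X)=0$ is correct and is precisely what the paper does after the reduction.

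Two points of divergence are worth flagging. First, your surjectivity heuristic is off. Subsection~\ref{rewriting} does \emph{not} identify the problem with a Laurent-type splitting on an annulus controlled by $H^{n-1}_{\bar\partial}(\mathbb C^n\setminus\overline{\mathbb B},\Omega^p)$; that group is infinite-dimensional for every $p$, so the sentence ``for $\Omega^p$ with $p\ge 1$ [it] contributes nothing relevant'' is false as written. What the paper actually does in Subsection~\ref{rewriting} is use Hartogs repeatedly to collapse $\beta$ (modulo an explicit summand coming from the first factor of the domain) onto the single endomorphism $\tilde\beta=\gamma^*-\mathrm{Id}$ acting on $\Omega^p$ of a ball. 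Surjectivity of $\mathrm{Id}-\gamma^*$ is then proved in Section~\ref{isomorhism} by showing the Neumann series $\sum_{m\ge 0}(\gamma^*)^m\eta$ converges uniformly on compacta (first for a high iterate $\gamma^d$ via explicit estimates, then descending via the factorization $\mathrm{Id}-(\gamma^*)^d=(\mathrm{Id}-\gamma^*)\bigl(\mathrm{Id}+\cdots+(\gamma^*)^{d-1}\bigr)$). Your instinct that ``pulling back by iterates of $\gamma$ kills any obstruction'' is the right one, but the mechanism is this Neumann series on the ball, not an annulus cohomology vanishing.

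Second, your $p=0$ paragraph is circular. You propose reading off $\dim\operatorname{coker}\beta$ from the exact sequence \eqref{cn2} as $h^{0,1}(X)-h^{0,1}(\widehat{\mathbb B})$, but $h^{0,1}(X)$ is exactly what Theorem~\ref{Hodge numbers} extracts \emph{from} Theorem~\ref{betainfirstsection}. The paper instead computes the cokernel directly (Proposition~\ref{p=0}): the image of $\mathrm{Id}-\gamma^*$ on $\mathcal O(\mathbb B)$ is the ideal of functions vanishing at $0$, again by summing $\sum_m(\gamma^*)^m g$ for $g(0)=0$.
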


We have thus proved the following theorem:
\begin{thm}\label{Hodge numbers}
    Let $X = X(\pi,\sigma)$ be a Kato manifold with Kato modification 
$\pi : \widehat{\mathbb{B}} \to \mathbb{B}$. 
Then we have the following relations for the Hodge numbers of $X$:
\begin{align*}
    h^{p,q}(X) &= h^{p,q}(\widehat{\mathbb{B}}),
\qquad q \neq 0,1,n-1,n\\
h^{p,0}(X) &= h^{n-p,n}(X) =
\begin{cases}
1, & p = 0 \\
0, & p \neq 0
\end{cases}\\
h^{p,1}(X) &= h^{n-p,n-1}(X) =
\begin{cases}
h^{0,1}(\widehat{\mathbb{B}}) + 1, & p = 0 \\
h^{p,1}(\widehat{\mathbb{B}}), & p \neq 0.
\end{cases}
\end{align*}
\end{thm}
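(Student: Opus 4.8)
The plan is to assemble Theorem~\ref{Hodge numbers} by combining three ingredients already prepared in this section: the low-degree part of the Mayer--Vietoris sequence \eqref{cn2}, the middle-degree isomorphisms \eqref{cn1}, and the structural input of Theorem~\ref{betainfirstsection} on the map $\beta$. First I would dispose of the ranges $q\neq 0,1,n-1,n$: for $2\le q\le n-2$ the identity $h^{p,q}(X)=h^{p,q}(\widehat{\mathbb{B}})$ is exactly \eqref{cn1}, so nothing further is needed there. The degrees $q=0,1$ require genuine work, and the degrees $q=n-1,n$ will be handled by Serre duality $h^{p,q}(X)=h^{n-p,n-q}(X)$ once $q=0,1$ are settled.

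Next I would treat $q=0$. Since $X$ is a compact connected complex manifold, $h^{0,0}(X)=1$; since $X$ has a global spherical shell it carries no nonzero holomorphic $p$-forms for $p\ge 1$ (equivalently, this follows from the exact sequence \eqref{cn2} together with Theorem~\ref{betainfirstsection}: for $1\le p\le n$ the map $\beta$ is an isomorphism, so $\Omega^p(X)=\ker\beta=0$), giving $h^{p,0}(X)=0$ for $p\neq 0$. Combined with Serre duality this yields the stated values of $h^{n-p,n}(X)$.

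The degree $q=1$ is the heart of the argument and where I expect the main obstacle. Here I would read off from \eqref{cn2} the four-term exact sequence
\begin{equation*}
0\longrightarrow \Omega^p(U)\oplus\Omega^p(V)\xrightarrow{\ \beta\ }\Omega^p(U_1)\oplus\Omega^p(U_2)\longrightarrow H^1(X,\Omega^p)\longrightarrow H^1(\widehat{\mathbb{B}},\Omega^p)\longrightarrow 0,
\end{equation*}
using that $\Omega^p(X)=\ker\beta$ has already been pinned down. By Theorem~\ref{betainfirstsection}, $\beta$ is an isomorphism for $1\le p\le n$, so the connecting map $\Omega^p(U_1)\oplus\Omega^p(U_2)\to H^1(X,\Omega^p)$ is zero and $H^1(X,\Omega^p)\cong H^1(\widehat{\mathbb{B}},\Omega^p)$, i.e.\ $h^{p,1}(X)=h^{p,1}(\widehat{\mathbb{B}})$ for $p\neq 0$. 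For $p=0$, Theorem~\ref{betainfirstsection} says $\operatorname{im}\beta$ has codimension one, so the cokernel of $\beta$ is one-dimensional, and exactness gives a short exact sequence $0\to \mathbb{C}\to H^1(X,\mathcal{O})\to H^1(\widehat{\mathbb{B}},\mathcal{O})\to 0$, whence $h^{0,1}(X)=h^{0,1}(\widehat{\mathbb{B}})+1$. Passing to Dolbeault cohomology via Dolbeault's theorem and applying Serre duality $h^{p,1}(X)=h^{n-p,n-1}(X)$ produces the last displayed family of equalities and completes the proof. The one subtlety to be careful about is that the whole degree-$1$ and degree-$0$ analysis hinges entirely on Theorem~\ref{betainfirstsection}, whose proof is deferred to Subsection~\ref{rewriting} and Section~\ref{isomorhism}; modulo that input the present theorem is a bookkeeping exercise in the long exact sequence together with Serre duality, so I would present it exactly in that spirit.
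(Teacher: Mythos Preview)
Your proposal is correct and follows essentially the same approach as the paper: the middle degrees come from \eqref{cn1}, the degrees $q=0,1$ are read off from the exact sequence \eqref{cn2} together with Theorem~\ref{betainfirstsection}, and $q=n-1,n$ are obtained by Serre duality. The only minor imprecision is that your displayed ``four-term exact sequence'' with a $0$ on the left is literally valid only for $p\ge 1$ (where $\ker\beta=\Omega^p(X)=0$); for $p=0$ the kernel is $\mathbb{C}$, but since your argument for $p=0$ uses only $\operatorname{coker}\beta\cong\mathbb{C}$, this does not affect the conclusion.
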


\subsection{Simplifying the map \texorpdfstring{$\beta$}{beta} in the Mayer--Vietoris sequence}\label{rewriting}
In this subsection, we rewrite the map 
\begin{equation}\label{firstform}
    \begin{split}
        \beta : \Omega^p(U) \oplus \Omega^p(V)
&\longrightarrow \Omega^p(U_1) \oplus \Omega^p(U_2)\\
(\omega, \eta) &\longmapsto 
\big( \omega_{|_{U_1}} - \eta_{|_{U_1}}, \, 
       \omega_{|_{U_2}} - \eta_{|_{U_2}} \big)
    \end{split}
\end{equation}
appearing in (\ref{cn2}) in a more manageable form. Our goal is to describe its kernel and image, which we do by identifying them with those of a simpler map.  \\

First, we consider the map
\begin{equation}
    \begin{split}
        \beta_1 : \Omega^p(U) &\longrightarrow 
\Omega^p(U_1) \oplus \Omega^p(U_2)\\
\omega &\longmapsto 
\big( \omega_{|_{U_1}}, \, \omega_{|_{U_2}} \big).
    \end{split}
\end{equation}
Giving a holomorphic $p$-form 
$\omega \in \Omega^p(U)$ is equivalent to giving 
a holomorphic $p$-form $\tilde{\omega}$ on 
$\tilde{U} := U_1 \cup \overline{\mathcal{A}_I} \cup \sigma\pi(U_2)$, defined by
\[
\tilde{\omega} =
\begin{cases}
\omega_{|_{U_1}} & \text{on } U_1,\\[4pt]
(\pi^{-1}\sigma^{-1})^*\omega_{|_{\mathcal{A}_E}}\equiv\omega_{|_{\mathcal{A}_I}} & \text{on } \mathcal{A}_I,\\[4pt]
(\pi^{-1}\sigma^{-1})^*\omega_{|_{U_2}} & \text{on } \sigma\pi(U_2).
\end{cases}
\]
Hence, giving $\beta_1$ is equivalent to giving a new map
\begin{align*}
    \beta_1 : \Omega^p(\tilde{U}) 
&\longrightarrow 
\Omega^p(U_1) \oplus \Omega^p\big(\sigma\pi(U_2)\big)\\
\omega &\longmapsto 
\big( \omega_{|_{U_1}}, \, \omega_{|_{\sigma\pi(U_2)}} \big).
\end{align*}
Using Hartogs' extension theorem, we obtain the following identifications:
\begin{align*}
    \Omega^p(\tilde{U}) &\equiv  \Omega^p(\mathbb{B}_1), \\
    \Omega^p(U_1) &\equiv  \Omega^p(\mathbb{B}_1),\\
    \Omega^p\big(\sigma\pi(U_2)\big) &\equiv \Omega^p(\mathbb{B}_2).
\end{align*}
We can rewrite $\beta_1$ as
\begin{equation}
    \begin{split}
        \beta_1 : \Omega^p(\mathbb{B}_1)
&\longrightarrow 
\Omega^p(\mathbb{B}_1) \oplus \Omega^p(\mathbb{B}_2)\\
\omega &\longmapsto 
\big( \omega, \, \omega_{|_{\mathbb{B}_2}} \big).
    \end{split}
\end{equation}
Now we consider the map
\begin{equation}
    \begin{split}
        \beta_2 : \Omega^p(V) &\longrightarrow 
    \Omega^p(U_1) \oplus \Omega^p(U_2)\\
    \eta &\longmapsto 
    \big( \eta_{|_{U_1}}, \, \eta_{|_{U_2}} \big).
    \end{split}
\end{equation}
Again, we can identify this map with the following one:
\begin{align*}
    \beta_2 : \Omega^p(V) 
    &\longrightarrow 
    \Omega^p(U_1) \oplus \Omega^p\big(\sigma\pi(U_2)\big)\\
    \eta &\longmapsto 
    \big( \eta_{|_{U_1}}, \, (\pi^{-1}\sigma^{-1})^*\eta_{|_{U_2}} \big).
\end{align*}
From Hartogs’ extension theorem, we can further regard $\beta_2$ as the map
\begin{equation}
    \begin{split}
        \beta_2 : \Omega^p(\widehat{\mathbb{B}}\setminus\overline{\mathcal{A}_E})
    &\longrightarrow 
    \Omega^p(\mathbb{B}_1) \oplus \Omega^p(\mathbb{B}_2)\\
    \eta &\longmapsto 
    \big( \eta_{|_{\mathbb{B}_1}}, \, (\pi^{-1}\sigma^{-1})^*\eta \big).
    \end{split}
\end{equation}
Finally, we identify the map $\beta$ with 
\begin{equation}
    \begin{split}
        \beta : \Omega^p(\mathbb{B}_1)\oplus\Omega^p(\widehat{\mathbb{B}}\setminus\overline{\mathcal{A}_E})
    &\longrightarrow 
    \Omega^p(\mathbb{B}_1) \oplus \Omega^p(\mathbb{B}_2)\\
    (\omega,\eta) &\longmapsto 
    \bigg( \omega-\eta_{|_{\mathbb{B}_1}}, \,\omega_{|_{\mathbb{B}_2}}-(\pi^{-1}\sigma^{-1})^*\eta \bigg).
    \end{split}
\end{equation}
Asking \(\beta(\omega,\eta) = (a,b)\) 
is equivalent to requiring the following system:
\[
\begin{cases}
\omega = a + \eta_{|_{\mathbb{B}_1}},\\[6pt]
b - a_{|_{\mathbb{B}_2}} 
= \eta_{|_{\mathbb{B}_2}} 
- (\pi^{-1}\sigma^{-1})^*\eta.
\end{cases}
\]

For our purposes, we observe that studying the kernel and image of $\beta$ is 
equivalent to studying the kernel and image of the map $\tilde{\beta}$ defined by
\begin{align*}
    \tilde{\beta} : \Omega^p(\widehat{\mathbb{B}}\setminus\overline{\mathcal{A}_E})
    &\longrightarrow  \Omega^p(\mathbb{B}_2)\\
    \eta &\longmapsto 
    \eta_{|_{\mathbb{B}_2}}-(\pi^{-1}\sigma^{-1})^*\eta.
\end{align*} 
Indeed, in the end we will show that for $1\le p \le n$ the map $\tilde{\beta}$ is an isomorphism, which in turn implies that $\beta$ is also an isomorphism. If $p=0$, the image of $\tilde{\beta}$ is the space of holomorphic functions vanishing at $\sigma(0)$, and therefore the image of $\beta$ consists of pairs $(a,b)$ satisfying 
$a\big(\sigma(0)\big) = b\big(\sigma(0)\big)$. This subspace has codimension one in the codomain of $\beta$.

The endomorphism \(\tilde{\beta}\) is equivalently given by
\begin{equation}
    \begin{split}
        \tilde{\beta} : \Omega^p(\widehat{\mathbb{B}}\setminus\overline{\mathcal{A}_E})
    &\longrightarrow \Omega^p(\widehat{\mathbb{B}}\setminus\overline{\mathcal{A}_E})\\
    \eta &\longmapsto 
   (\sigma\pi)^*\iota^*\eta-\eta=(\iota\sigma\pi)^*\eta-\eta,
    \end{split}
\end{equation}
where $\iota$ is the inclusion $\iota:\mathbb{B}_2\hookrightarrow\widehat{\mathbb{B}}\setminus\overline{\mathcal{A}_E}$. 
By construction, we have isomorphisms
\begin{align*}
    \mathbf{1}_1 : \Omega^p(\widehat{\mathbb{B}}\setminus\overline{\mathcal{A}_E}) &\overset{\sim}{\longrightarrow}\Omega^p\big(\sigma^{-1}(\mathbb{B}_2)\big)\\
    \omega &\longmapsto (\pi^{-1})^*\omega,
\end{align*}
and 
\begin{align*}
    \mathbf{1}_2 : \Omega^p\big(\sigma^{-1}(\mathbb{B}_2)\big)
&\overset{\sim}{\longrightarrow}
\Omega^p(\widehat{\mathbb{B}}\setminus\overline{\mathcal{A}_E})\\
\omega &\longmapsto \pi^*\omega.
\end{align*}
Then, up to considering the composition 
$\mathbf{1}_1 \circ \tilde{\beta} \circ \mathbf{1}_2$, 
we can rewrite the map $\tilde{\beta}$ as
\begin{equation}
    \begin{split}
        \tilde{\beta} : \Omega^p\big(\sigma^{-1}(\mathbb{B}_2)\big)
    &\longrightarrow \Omega^p\big(\sigma^{-1}(\mathbb{B}_2)\big),\\
    \eta &\longmapsto (\pi\iota\sigma)^*\eta - \eta \equiv \gamma^*\eta - \eta,
    \end{split}
\end{equation}
and, using (\ref{preimageB2}), we can equivalently write
\begin{equation}
    \begin{split}
        \tilde{\beta} : \Omega^p (\mathbb{B}_{1-\epsilon})
    &\longrightarrow \Omega^p (\mathbb{B}_{1-\epsilon}),\\
    \eta &\longmapsto \gamma^*\eta - \eta.
    \end{split}
\end{equation}

\section{The map \texorpdfstring{$\mathrm{Id}-\gamma^*$}{Id-gamma*}}\label{isomorhism}
This section aims to prove the following theorem:
\begin{thm}\label{beta}
    Let $\gamma:\mathbb B\to\mathbb B$ be a holomorphic contraction at zero. Then, the map
    \begin{align*}
    \beta : \Omega^p(\mathbb{B}) &\to \Omega^p(\mathbb{B})\\
\eta &\mapsto \eta - \gamma^*\eta
\end{align*}
is an isomorphism for any $1\le p \le n$, whereas for $p=0$ its image has codimension one.
\end{thm}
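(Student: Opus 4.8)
The plan is to analyze the operator $\mathrm{Id}-\gamma^*$ on $\Omega^p(\mathbb{B})$ by exploiting that $\gamma$ is a holomorphic contraction: after shrinking $\mathbb{B}$ slightly (which does not change the problem, by Hartogs / restriction arguments already used in Section~\ref{rewriting}), we may assume $\gamma$ maps $\overline{\mathbb{B}}$ into a relatively compact subset, so that the iterates $\gamma^k$ converge uniformly to the constant map $0$ on $\overline{\mathbb{B}}$, with exponential decay governed by the spectral radius $\rho<1$ of the linear part $d\gamma_0$. This suggests that $\mathrm{Id}-\gamma^*$ should be invertible with inverse given by the Neumann series $\sum_{k\ge 0}(\gamma^*)^k = \sum_{k\ge 0}(\gamma^k)^*$.

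First I would make the convergence precise. For $p\ge 1$, a holomorphic $p$-form on $\mathbb{B}$ is $\eta=\sum_{|I|=p}f_I\,dz_I$; then $(\gamma^k)^*\eta$ has coefficients that are sums of products of $f_I\circ\gamma^k$ with $p\times p$ minors of the Jacobian $d\gamma^k$. Since $\|\gamma^k\|\to 0$ uniformly on compacta and $\|d\gamma^k\|\lesssim (\rho+\epsilon)^k\to 0$ for any small $\epsilon$ with $\rho+\epsilon<1$ (by the chain rule $d\gamma^k_z=d\gamma_{\gamma^{k-1}z}\circ\cdots\circ d\gamma_z$ and submultiplicativity of the operator norm near $0$), each coefficient of $(\gamma^k)^*\eta$ is bounded on a fixed slightly smaller ball by $C\|\eta\|_{\overline{\mathbb{B}}}\,(\rho+\epsilon)^{pk}$. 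Hence $\sum_k (\gamma^k)^*\eta$ converges normally, defining a holomorphic $p$-form, and by the usual telescoping identity $(\mathrm{Id}-\gamma^*)\sum_{k\ge 0}(\gamma^k)^* = \mathrm{Id} = \sum_{k\ge 0}(\gamma^k)^*(\mathrm{Id}-\gamma^*)$, using $(\gamma^k)^*\gamma^* = (\gamma^{k+1})^*$ and that the tail $(\gamma^{N+1})^*\eta\to 0$. This proves $\mathrm{Id}-\gamma^*$ is a bijection of $\Omega^p(\mathbb{B})$ for $p\ge 1$. (Alternatively one can phrase this spectrally: the operator norm of $\gamma^*$ on an appropriate Banach space of holomorphic forms on a smaller ball is $<1$ once we use that $p\ge 1$ forces at least one Jacobian factor.)

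For $p=0$ the same Neumann series converges, but now $(\gamma^k)^*f = f\circ\gamma^k \to f(0)$ (constant), so the tail does not vanish: $(\mathrm{Id}-\gamma^*)$ kills the constants, and more precisely $(\mathrm{Id}-\gamma^*)f$ always vanishes at the fixed point $0$. Conversely, if $g\in\mathcal{O}(\mathbb{B})$ satisfies $g(0)=0$, then $\sum_{k\ge 0} g\circ\gamma^k$ converges (since $|g\circ\gamma^k|\lesssim \|dg\|\cdot\|\gamma^k\|\lesssim (\rho+\epsilon)^k$ near $0$ as $g$ vanishes at $0$) and is a preimage of $g$; the kernel is exactly the constants. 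Thus for $p=0$ the image is the codimension-one subspace $\{g : g(0)=0\}$, as claimed.

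The main obstacle is the quantitative control of the pullback $(\gamma^k)^*$ on $p$-forms: one must bound not just $\gamma^k$ but its derivatives uniformly in $k$, which requires choosing the ball on which one works carefully so that the contraction estimate $\|d\gamma_z\|\le \rho+\epsilon$ holds on it (this is where the Schwarz-lemma reduction in the Preliminaries, giving that $\gamma$ is a genuine contraction at $0$, is essential), and then controlling the $p\times p$ minors and Cauchy-estimating derivatives on a still smaller ball. Once this uniform exponential decay is in hand, the telescoping argument is routine, and the identification of image and kernel in the $p=0$ case is immediate.
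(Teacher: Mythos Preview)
Your Neumann-series strategy is exactly the one the paper uses, and your treatment of the $p=0$ case matches the paper's Proposition~\ref{p=0} essentially verbatim. The only substantive difference is in how you obtain the decay of $(\gamma^k)^*$ on $p$-forms for $p\ge 1$.

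You claim $\|d\gamma_z\|\le\rho+\epsilon$ on a small enough ball, where $\rho$ is the spectral radius of $d\gamma_0$. As written this is not quite right: shrinking the ball only gives $\|d\gamma_z\|\le\|d\gamma_0\|+\epsilon$, and the operator norm $\|d\gamma_0\|$ can strictly exceed $\rho$ when $d\gamma_0$ is not diagonalizable or not normal. The standard fix is to replace the Euclidean norm by a Hermitian norm adapted to the Jordan form of $d\gamma_0$, so that $\|d\gamma_0\|_{\mathrm{new}}<\rho+\epsilon$; then your chain-rule argument goes through. Alternatively one can use $\|(d\gamma_0)^k\|^{1/k}\to\rho$ to bound $d\gamma^k$ directly after splitting off finitely many initial factors. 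Either way the conclusion $\|d\gamma^k\|\lesssim(\rho+\epsilon)^k$ is correct, but the justification you gave needs this extra ingredient.

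The paper sidesteps this entirely. It first proves the result (Proposition~\ref{small contraction}) under the crude hypothesis that each partial $|\partial\gamma_i/\partial z_j|$ is small enough near $0$---which Lemma~\ref{control} guarantees when the contraction constant $C$ satisfies $(2C)^p\frac{n!}{(n-p)!}<1$---and then reduces the general case by observing that $\gamma^d$ has contraction constant $C^d$, so $\mathrm{Id}-(\gamma^*)^d$ is an isomorphism for $d\gg 1$; the factorization
\[
\mathrm{Id}-(\gamma^*)^d=(\mathrm{Id}-\gamma^*)\bigl(\mathrm{Id}+\gamma^*+\cdots+(\gamma^*)^{d-1}\bigr)
\]
then forces $\mathrm{Id}-\gamma^*$ itself to be bijective. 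This trade-off avoids any mention of spectral radius or adapted norms, at the cost of one extra algebraic step; your route is more direct once the norm issue is patched.
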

\begin{rem}\label{mutandis}
    The statement of Theorem~\ref{beta} still holds if, in the definition of \(\beta\), we use any ball \(\mathbb{B}_r\) of radius \(r \in (0,1)\) instead of \(\mathbb{B}\). The proof applies \emph{mutatis mutandis}. Since the germ \(\gamma\) of a Kato manifold is a holomorphic contraction at the origin, Theorem~\ref{beta}, together with the results of Subsection~\ref{rewriting},
implies Theorem~\ref{betainfirstsection}.

\end{rem}

\begin{rem} 
By Subsection~\ref{rewriting}, the kernel of the operator
$\mathrm{Id}-\gamma^*$ is isomorphic to the kernel of the map in~\eqref{firstform},
and hence to $\Omega^p(X)$.
Following the approach of~\cite[Section~4]{MallFred}, we find that $\mathrm{Id}-\gamma^*$ is a Fredholm
operator of index zero.
The key point of the proof is the compactness of the map
$\gamma^*$.
In our setting, this follows directly from the property
$|\gamma(z)| \le |z|$ for any $z \in \mathbb{B}$, which implies that
$\gamma^*\left(\overline{\mathbb{B}_1\big(\Omega^p(\mathbb{B})\big)}\right)$
is bounded and contained in
$\overline{\mathbb{B}_1\big(\Omega^p(\mathbb{B})\big)}$.
The remainder of the argument then proceeds as in~\cite{MallFred}. 
As a consequence, if $p = 0$ the cokernel of $\mathrm{Id}-\gamma^*$ has dimension
one, since its index is zero, while if $1 \le p \le n$ it suffices to prove the
injectivity of $\mathrm{Id}-\gamma^*$ to conclude that it is an isomorphism.
However, in the following, we provide a more self-contained proof of
Theorem~\ref{beta}, avoiding the use of functional analysis.
\end{rem}

First, we study the relation 
\begin{equation}\label{gammainvariance}
\eta = \gamma^* \eta,
\end{equation}
where $\eta \in \Omega^p(\mathbb{B})$ is a holomorphic $p$-form with $1\le p \le n$, 
and $\gamma : \mathbb{B} \to \mathbb{B}$ is a holomorphic map.
We denote by \( I = (i_1, i_2, \ldots, i_p) \) an ordered subset of \( p \) distinct elements in \( \{1,\ldots,n\} \). 
If \( i_1 < i_2 < \ldots < i_p \), we say that \( I \) is \emph{ord}.  
We write $I\equiv J$ if $I$ and $J$ represent the same set. For any $I$, we have
\begin{align*}
    \gamma^*(dz_I)
&= d\gamma_{i_1} \wedge \ldots \wedge d\gamma_{i_p}\\
&= \sum_J 
\frac{\partial \gamma_{i_1}}{\partial z_{j_1}}
\cdot\ldots\cdot
\frac{\partial \gamma_{i_p}}{\partial z_{j_p}}
\, dz_J\\
&= \sum_J 
\frac{\partial \gamma_I}{\partial z_J} \, dz_J\\
&=\sum_{J\text{ord}} 
\sum_{L \equiv J}
\frac{\partial \gamma_I}{\partial z_L} 
(-1)^L
\, dz_J,
\end{align*}
where we used the notation
\[
\frac{\partial \gamma_I}{\partial z_J}
:= 
\frac{\partial \gamma_{i_1}}{\partial z_{j_1}}
\cdot\ldots\cdot
\frac{\partial \gamma_{i_p}}{\partial z_{j_p}}.
\]
If
\[
\eta = \sum_{I_1\text{ord}} \eta_{I_1} \, dz_{I_1},
\]
we obtain
\begin{align*}
    \gamma^* \eta
&= 
\sum_{I_1\text{ord}} 
\eta_{I_1}(\gamma)
\sum_{I\text{ord}} 
\sum_{L \equiv I}
\frac{\partial \gamma_{I_1}}{\partial z_L}
(-1)^L
\, dz_I\\
&=\sum_{I\text{ord}} \sum_{I_1\text{ord}} 
\eta_{I_1}(\gamma)
\sum_{L \equiv I}
\frac{\partial \gamma_{I_1}}{\partial z_L}
(-1)^L
\, dz_I.
\end{align*}
Therefore, for any $I$ the relation (\ref{gammainvariance}) yields
\begin{equation}
\eta_I
= 
\sum_{I_1\text{ord}}
\eta_{I_1}(\gamma)
\sum_{L_1 \equiv I}
\frac{\partial \gamma_{I_1}}{\partial z_{L_1}}
(-1)^{L_1}.
\end{equation}
Iterating this identity, for any $r>0$ we find:
\begin{align}\label{Injectivity}
    \eta_I &= \sum_{I_1\text{ord}} \Bigg(
  \sum_{I_2\text{ord}} 
  \eta_{I_2}(\gamma^2)
  \sum_{L_2 \equiv I_1} 
  \frac{\partial \gamma_{I_2}}{\partial z_{L_2}}(\gamma) (-1)^{L_2}
\Bigg)
\sum_{L_1 \equiv I} 
\frac{\partial \gamma_{I_1}}{\partial z_{L_1}} (-1)^{L_1}\notag\\
&=\sum_{I_1, I_2\text{ord}} 
\eta_{I_2}(\gamma^2)
\sum_{\substack{L_2 \equiv I_1 \\ L_1 \equiv I}}
\frac{\partial \gamma_{I_2}}{\partial z_{L_2}}(\gamma)
\frac{\partial \gamma_{I_1}}{\partial z_{L_1}}
(-1)^{L_1 + L_2}\notag\\
&\vdots\notag\\
&=\sum_{\substack{I_1, \ldots, I_r \\ \text{ord}}}
\sum_{\substack{L_r\equiv I_{r-1}\\ \ldots \\ L_1 \equiv I}}
\eta_{I_r}(\gamma^r)
\prod_{j=1}^r 
\left(\frac{\partial \gamma_{I_j}}{\partial z_{L_j}}\big(\gamma^{j-1}\big)\right)
(-1)^{L_j}.
\end{align}
Now we write explicitly $(\gamma^*)^r \eta$ for any $r > 0$. As before, we have 
\[
\gamma^*\eta
= 
\sum_{J_0, J_1\text{ord}}
\eta_{J_0} (\gamma)
\sum_{L_1 \equiv J_1}
\frac{\partial \gamma_{J_0}}{\partial z_{L_1}}
(-1)^{L_1} \, dz_{J_1}.
\]
Then,
\begin{align*}
    (\gamma^*)^2\eta
&=
\sum_{J_0, J_1 \text{ord}}
\eta_{J_0} ( \gamma^2)
\sum_{L_1 \equiv J_1}
\frac{\partial \gamma_{J_0}}{\partial z_{L_1}} ( \gamma)\
(-1)^{L_1}
\sum_{J_2 \text{ord}}
\sum_{L_2 \equiv J_2}
\frac{\partial \gamma_{J_1}}{\partial z_{L_2}}
(-1)^{L_2} \, dz_{J_2}\\
&=\sum_{\substack{J_0, J_1, J_2\\ \text{ord}}}
\sum_{\substack{L_1 \equiv J_1 \\ L_2 \equiv J_2}}
\eta_{J_0} ( \gamma^2)\cdot
\frac{\partial \gamma_{J_0}}{\partial z_{L_1}}( \gamma)\cdot
\frac{\partial \gamma_{J_1}}{\partial z_{L_2}}
(-1)^{L_1 + L_2} \, dz_{J_2}.
\end{align*}
Iterating, for any $r>0$ we obtain
\begin{equation}\label{Powers}
(\gamma^*)^r \eta
=
\sum_{\substack{J_0, \ldots, J_r \\ \text{ord}}}
\sum_{\substack{L_1 \equiv J_1 \\ \ldots \\ L_r \equiv J_r}}
\eta_{J_0} ( \gamma^r)
\prod_{l=0}^{r-1}
\left(
\frac{\partial \gamma_{J_l}}{\partial z_{L_{l+1}}}
\big( \gamma^{r-1-l}\big)
\right)
(-1)^{L_{l+1}} \, dz_{J_r}.
\end{equation}

The next Lemma~\ref{control} is a simple observation, yet it plays a crucial role in the proof of Proposition~\ref{small contraction}. This proposition forms the core of the proof of Theorem~\ref{beta} for $1\le p \le n$.

\begin{lem}\label{control}
    Let $\gamma:\mathbb{B}\to\mathbb{B}$ be a holomorphic contraction at zero such that 
\[
|\gamma(z)| \le C |z| \quad \forall \,z \in \mathbb{B}.
\]
Then for any $1\le i,j\le n$ we have
\[
\left|\frac{\partial \gamma_i}{\partial z_j}(0)\right| \le C.
\]
\end{lem}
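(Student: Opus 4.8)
The plan is to derive this directly from the Cauchy estimates applied to the components $\gamma_i$. Since $\gamma:\mathbb{B}\to\mathbb{B}$ satisfies $|\gamma(z)|\le C|z|$ for all $z\in\mathbb{B}$, in particular $\gamma(0)=0$, and each component $\gamma_i$ is a holomorphic function on $\mathbb{B}$ with $\gamma_i(0)=0$. The partial derivative $\frac{\partial\gamma_i}{\partial z_j}(0)$ is the coefficient of $z_j$ in the Taylor expansion of $\gamma_i$ at the origin.

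First I would fix $1\le i,j\le n$ and restrict $\gamma_i$ to the complex line $z = \zeta e_j$, $\zeta\in\mathbb{C}$ with $|\zeta|<1$, where $e_j$ is the $j$-th standard basis vector. Setting $f(\zeta) := \gamma_i(\zeta e_j)$, this is a holomorphic function of one variable on the unit disk with $f(0)=0$, and by hypothesis $|f(\zeta)| = |\gamma_i(\zeta e_j)| \le |\gamma(\zeta e_j)| \le C|\zeta e_j| = C|\zeta|$. Then $f(\zeta)/\zeta$ extends holomorphically across $\zeta=0$ and is bounded by $C$ on a punctured disk of any radius $r<1$, hence by the maximum principle $|f(\zeta)/\zeta|\le C$ on all of the disk of radius $r$; letting $\zeta\to 0$ gives $|f'(0)|\le C$. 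Since $f'(0) = \frac{\partial\gamma_i}{\partial z_j}(0)$ by the chain rule, this is exactly the claimed bound. Alternatively, one may apply the Cauchy integral formula for the coefficient directly: $\frac{\partial\gamma_i}{\partial z_j}(0) = \frac{1}{2\pi i}\int_{|\zeta|=r} \frac{f(\zeta)}{\zeta^2}\,d\zeta$, and estimate $|\frac{\partial\gamma_i}{\partial z_j}(0)|\le \frac{1}{2\pi}\cdot 2\pi r\cdot \frac{Cr}{r^2} = C$.

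There is essentially no obstacle here; this is a one-variable Schwarz-lemma-type estimate, and the only care needed is to make sure the restriction to the coordinate line stays inside $\mathbb{B}$ (which it does, since $|\zeta e_j|=|\zeta|<1$) and that the chain rule correctly identifies $f'(0)$ with the partial derivative. I would write the proof in the Schwarz-lemma form for brevity, noting that the hypothesis $|\gamma(z)|\le C|z|$ immediately yields $|\gamma_i(z)|\le |\gamma(z)|\le C|z|$ componentwise.
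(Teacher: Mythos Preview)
Your proof is correct and is essentially the same as the paper's: both restrict to the line $z=\zeta e_j$ and use the hypothesis $|\gamma(z)|\le C|z|$ to bound the difference quotient at the origin. The paper just phrases this more tersely as $d_0\gamma\cdot v=\lim_{t\to 0}\gamma(tv)/t$, giving $|d_0\gamma\cdot v|\le C|v|$ directly, whereas you spell out the one-variable Schwarz-lemma/Cauchy-estimate step.
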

\begin{proof}
By definition, we have
\[
d_0\gamma \cdot v = \lim_{t \to 0} \frac{\gamma(tv)}{t} \quad \forall\ v \in \mathbb{C}^n.
\]
Hence,
\[
|d_0\gamma \cdot v| \le C\ |v| \quad \forall\ v \in \mathbb{C}^n,
\]
which allows us to conclude.
\end{proof}

\begin{prop}\label{small contraction}
Let $\gamma:\mathbb B\to\mathbb B$ be a holomorphic contraction at $0$ such that
\[
|\gamma(z)|\le C|z|\qquad\forall\, z\in\mathbb B.
\]
If $1\le p \le n$ and the constant $C>0$ satisfies
\begin{equation*}
    (2C)^p\frac{n!}{(n-p)!}<1,
\end{equation*}
then the map
\begin{align*}
    \beta : \Omega^p(\mathbb{B}) &\to \Omega^p(\mathbb{B})\\
\eta &\mapsto \eta - \gamma^*\eta
\end{align*}
is an isomorphism.
\end{prop}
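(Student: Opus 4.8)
The plan is to show that $\beta=\mathrm{Id}-\gamma^*$ is an isomorphism by proving separately that it is injective and surjective, exploiting the smallness hypothesis $(2C)^p\frac{n!}{(n-p)!}<1$ to make the operator $\gamma^*$ a genuine contraction on an appropriate Banach space of holomorphic forms. First I would fix the Banach space: on $\Omega^p(\mathbb{B})$ consider the norm given by the supremum over $\mathbb{B}$ of the maximum of the moduli of the ord-coefficients $\eta_I$ (or, equivalently up to a harmless constant, the sup-norm of $\eta$ as a bundle-valued form), so that $\Omega^p(\mathbb{B})\cap\{\|\cdot\|<\infty\}$ is complete. The key estimate is a bound on $\|\gamma^*\eta\|$ in terms of $\|\eta\|$. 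Using the explicit formula for $\gamma^*\eta$ recalled just before the statement, $\gamma^*\eta=\sum_{I\,\mathrm{ord}}\big(\sum_{I_1\,\mathrm{ord}}\eta_{I_1}(\gamma)\sum_{L\equiv I}\frac{\partial\gamma_{I_1}}{\partial z_L}(-1)^L\big)dz_I$, I would estimate each coefficient: there are $\binom{n}{p}$ choices of $I_1$, $p!$ permutations $L\equiv I$, and each factor $\frac{\partial\gamma_{i}}{\partial z_j}$ is controlled — this is where Lemma~\ref{control} enters, but applied not only at $0$: by the Schwarz-type bound $|\gamma(z)|\le C|z|$ together with the Cauchy estimates on slightly smaller balls, $|\frac{\partial\gamma_i}{\partial z_j}|$ is bounded by (a constant multiple of) $C$ on $\mathbb{B}$ (this is the mutatis-mutandis remark: one works on $\mathbb{B}_r$, $r<1$, where Cauchy estimates are clean). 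Collecting, $\|\gamma^*\eta\|\le \binom{n}{p}p!\,C^p\,\|\eta\| = \frac{n!}{(n-p)!}C^p\|\eta\|$, and the extra factor $2^p$ in the hypothesis absorbs the constants coming from the Cauchy estimates / passage to $\mathbb{B}_r$. Hence $\|\gamma^*\|<1$.

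Once $\|\gamma^*\|=:\lambda<1$ on this Banach space, injectivity and surjectivity are immediate: if $\eta=\gamma^*\eta$ then $\|\eta\|\le\lambda\|\eta\|$ forces $\eta=0$ (for the coefficients this is exactly the content of iterating identity~\eqref{Injectivity} and letting $r\to\infty$, since $\eta_{I_r}(\gamma^r)\to\eta_{I_r}(0)$ stays bounded while the product of derivative factors is $O(\lambda^r)$); and for surjectivity, given $a\in\Omega^p(\mathbb{B})$ of finite norm the Neumann series $\eta=\sum_{r\ge0}(\gamma^*)^r a$ converges in norm by comparison with $\sum\lambda^r$, using formula~\eqref{Powers} termwise, and satisfies $(\mathrm{Id}-\gamma^*)\eta=a$. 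The only subtlety is that an arbitrary $a\in\Omega^p(\mathbb{B})$ need not have finite sup-norm on all of $\mathbb{B}$; this is handled by the standard exhaustion argument — solve on each $\mathbb{B}_r$ (where Remark~\ref{mutandis} says the same estimate holds), obtaining $\eta_r\in\Omega^p(\mathbb{B}_r)$, and check that the solutions are compatible (uniqueness on each ball, from injectivity) so they glue to $\eta\in\Omega^p(\mathbb{B})$ with $\beta\eta=a$. One should also note $(\gamma^*)^r a$ is well-defined on $\mathbb{B}_r$ because $\gamma(\mathbb{B}_r)\subset\mathbb{B}_{Cr}\subset\mathbb{B}_r$ when $C<1$, which follows from the smallness hypothesis.

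I expect the main obstacle to be pinning down the constants in the coefficient estimate so that they really are subsumed by the stated bound $(2C)^p\frac{n!}{(n-p)!}<1$ rather than something slightly larger. Concretely: Lemma~\ref{control} as stated only bounds $\frac{\partial\gamma_i}{\partial z_j}$ at the origin, whereas the formula for $\gamma^*\eta$ needs the bound at every point of $\mathbb{B}$ (or of $\mathbb{B}_r$), where one gets $\frac{\partial\gamma_i}{\partial z_j}(\gamma^{k})$ evaluated at points pushed inward by $\gamma$; a clean way is to apply Cauchy's integral formula for the derivative on a polydisc of radius $\delta$ around a point $w\in\mathbb{B}_{r}$ with $w+\delta$-polydisc $\subset\mathbb{B}$, giving $|\frac{\partial\gamma_i}{\partial z_j}(w)|\le \sup_{\mathbb{B}}|\gamma_i|/\delta\le C/\delta$, and the factor $2$ is exactly what one needs to run this uniformly on $\mathbb{B}_r$ as $r\uparrow 1$. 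Everything else — the combinatorial count $\frac{n!}{(n-p)!}$ of (ordered) terms, completeness of the sup-norm space, the Neumann-series and exhaustion arguments — is routine.
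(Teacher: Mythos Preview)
Your strategy of making $\gamma^*$ a contraction in sup-norm is the right instinct, but the crucial derivative estimate you invoke does not hold. You claim that from $|\gamma(z)|\le C|z|$ and Cauchy estimates one gets $\bigl|\frac{\partial\gamma_i}{\partial z_j}(w)\bigr|\le 2C$ uniformly on $\mathbb{B}_r$ as $r\uparrow 1$, with ``the factor $2$ exactly what one needs.'' This is false: the Cauchy inequality yields only $\bigl|\frac{\partial\gamma_i}{\partial z_j}(w)\bigr|\le C/\delta$ with $\delta$ the distance from $w$ to $\partial\mathbb{B}$, so the bound blows up as $|w|\to 1$ and no fixed constant can absorb it. Hence $\gamma^*$ need not have operator norm $<1$ on the bounded holomorphic $p$-forms on $\mathbb{B}_r$ for $r$ close to $1$, and your Neumann series cannot be summed in that norm. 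The exhaustion step then becomes circular: invoking Remark~\ref{mutandis} to ``solve on each $\mathbb{B}_r$'' presupposes the proposition on $\mathbb{B}_r$, which is exactly what is at stake.

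The paper's proof avoids this by never asserting a global derivative bound. Lemma~\ref{control} gives $\bigl|\frac{\partial\gamma_i}{\partial z_j}(0)\bigr|\le C$, and by continuity there is a \emph{small} radius $R$ on which $\bigl|\frac{\partial\gamma_i}{\partial z_j}\bigr|\le 2C$---this is the only place the factor $2$ enters. The contraction hypothesis then ensures that high iterates $\gamma^m$ map any compact set into $\mathbb{B}_R$. For injectivity this suffices immediately: on $\mathbb{B}_R$ every factor in \eqref{Injectivity} is evaluated inside $\mathbb{B}_R$, so the right side is $O\bigl(((2C)^p\tfrac{n!}{(n-p)!})^r\bigr)\to 0$. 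For surjectivity one fixes $\overline{\mathbb{B}}_{R_0}$, picks $r_0$ with $\gamma^{r_0}(\overline{\mathbb{B}}_{R_0})\subset\mathbb{B}_R$, and in the product appearing in \eqref{Powers} separates the last $r_0$ factors (bounded by the crude constant $C_{R_0}=\max_{\overline{\mathbb{B}}_{R_0}}\bigl|\frac{\partial\gamma_i}{\partial z_j}\bigr|$) from the remaining ones (whose arguments $\gamma^{r+r_0-1-l}(z)$ lie in $\mathbb{B}_R$, hence bounded by $2C$). The series then decays like $\bigl((2C)^p\tfrac{n!}{(n-p)!}\bigr)^r$ times a fixed constant, giving uniform convergence on compacta. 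The missing idea in your argument is precisely this two-regime splitting: the sharp bound $2C$ is only available near $0$, but the contraction drags every orbit there after finitely many steps.
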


\begin{proof}
    Let 
\[
\eta = \sum_{I\text{ord}} \eta_I\, dz_I \in \Omega^p(\mathbb{B})
\]
be a holomorphic $p$-form on $\mathbb{B}$ such that $\eta = \gamma^* \eta$. 
As in (\ref{Injectivity}), for any $r>0$, we have
\[
\eta_I = 
\sum_{\substack{I_1, \ldots, I_r \\ \text{ord}}}
\sum_{\substack{L_r \equiv I_{r-1} \\ \ldots \\ L_1 \equiv I}}
\eta_{I_r}(\gamma^r)
\prod_{j=1}^r 
\left(\frac{\partial \gamma_{I_j}}{\partial z_{L_j}}\big(\gamma^{j-1}\big)\right)
(-1)^{L_j}.
\]
By hypotesis and Lemma \ref{control},  there exists a small positive number $R \ll \tfrac{1}{2}$ such that for any $1\le i,j\le n $,
\[
\left| \frac{\partial \gamma_i}{\partial z_j}(z) \right|
\le 2C
\qquad \forall\ z \in \mathbb{B}_{R}.
\]
Set
\[
M := \max_I \|\eta_I\|_{L^\infty(\overline{\mathbb{B}}_{1/2})}.
\]
For any $I,L$, we have
\[
\left| \frac{\partial \gamma_I}{\partial z_L} \right| 
\le (2C)^p \quad \text{on } \mathbb{B}_R.
\]
For any $r > 0$ and $p$-multi–index $I$,
\begin{align}
    |\eta_I| &\le M (2C)^{p r}\sum_{\substack{I_1, \ldots, I_r \\ \text{ord}}}
\sum_{\substack{L_r \equiv I_{r-1} \\ \ldots \\ L_1 \equiv I}}1\notag\\
&=M (2C)^{p r} \binom{n}{p}^r (p!)^r\notag\\
&= M \left((2C)^p \frac{n!}{(n - p)!} \right)^r\quad \text{on}\ \ \mathbb{B}_R.\label{RS}
\end{align}
By hypothesis the right-hand side of (\ref{RS}) tends to zero as $r$ goes to $+\infty$, and hence each $\eta_I$ vanishes on $\mathbb{B}_R$.  
Consequently, $\eta \equiv 0$. The injectivity is proved.\\
Let $\eta \in \Omega^p(\mathbb{B})$ be a holomorphic $p$-form and $M$ as before.
We will prove that the series
\begin{equation}\label{series}
S_\eta := \sum_{m=0}^\infty (\gamma^*)^m \eta
\end{equation}
converges uniformly on compact subsets of $\mathbb{B}$.
This shows that $S_\eta$ is a holomorphic $p$-form on $\mathbb{B}$ and then
$\beta(S_\eta)=\eta$.\\
Let $R_0\in(0,1)$ be fixed. We prove the uniform convergence of the
series (\ref{series}) on $\overline{\mathbb{B}}_{R_0} \subset \mathbb{B}$.
Set
\[
C_{R_0} := \max_{i,j}
\left\|
\frac{\partial \gamma_i}{\partial z_j}
\right\|_{L^\infty(\overline{\mathbb{B}}_{R_0})},
\]
and choose $r_0\equiv r_0(R_0)$ such that for any $r\ge r_0$,
\[
\gamma^r(\overline{\mathbb{B}}_{R_0}) \subset \mathbb{B}_R .
\]
For any $r>0$ and $p$-multi-index $J$, on $\overline{\mathbb{B}}_{R_0}$ we have
\begin{align}\label{Mtest}
    \Big|\left((\gamma^*)^{\,r+r_0}\eta\right)_{J}\Big|&= \left| \sum_{\substack{J_0, \ldots, J_{r+r_0-1}\\ \text{ord}}}
\sum_{\substack{L_1 \equiv J_1 \\ \ldots \\ L_{r+r_0} \equiv J}}
\eta_{J_0} ( \gamma^{\,r+r_0})
\prod_{l=0}^{r+r_0-1}
\left(
\frac{\partial \gamma_{J_l}}{\partial z_{L_{l+1}}}
\big( \gamma^{r+r_0-1-l}\big)
\right)(-1)^{L_{l+1}}
\right| \notag\\
&\le M \sum_{\substack{J_0,\ldots,J_{r+r_0-1}\\ \mathrm{ord}}}
\sum_{\substack{L_1\equiv J_1 \\ \cdots \\ L_{r+r_0}\equiv J}}
\left|\prod_{l=0}^{r-1}
\left(
\frac{\partial \gamma_{J_l}}{\partial z_{L_{l+1}}}
\big(\gamma^{r+r_0-1-l}\big)
\right)
\prod_{l=r}^{r+r_0-1}
\left(
\frac{\partial \gamma_{J_l}}{\partial z_{L_{l+1}}}
\big(\gamma^{r+r_0-1-l}\big)
\right)
\right|\notag\\
&\le M (2C)^{pr}\, C_{ R_0}^{\,p r_0}
\left(\frac{n!}{(n-p)!}\right)^{r+r_0}\notag\\
&=M \left( C_{R_0}^p \frac{n!}{(n-p)!}\right)^{r_0}\left((2C)^p \frac{n!}{(n - p)!} \right)^r.
\end{align}
Since 
$$\left((2C)^p \frac{n!}{(n - p)!} \right) < 1,$$
the geometric series given by the terms on the right-hand side of (\ref{Mtest})
is convergent. By the Weierstrass M-test, $S_\eta$ converges uniformly on 
$\overline{\mathbb{B}}_{R_0}$. From the arbitrariness of $R_0 \in (0,1)$, we conclude that the series converges uniformly on compact subsets of $\mathbb{B}$, and therefore (\cite[Corollary 2.2.4]{Hormander1990}) defines a holomorphic $p$-form $S_\eta \in \Omega^p(\mathbb{B})$.

\end{proof}

We are now in a position to prove Theorem~\ref{beta} for $1\le p\le n$.

\begin{thm}
    If $\gamma:\mathbb B\to\mathbb B$ is a holomorphic contraction at $0$, 
then for any $1\le p \le n$ the map
\begin{align*}
    \beta : \Omega^p(\mathbb{B}) &\to \Omega^p(\mathbb{B})\\
    \eta &\mapsto \eta - \gamma^* \eta
\end{align*}
is an isomorphism.
\end{thm}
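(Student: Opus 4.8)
The plan is to bootstrap from the small-contraction case (Proposition~\ref{small contraction}) to an arbitrary holomorphic contraction $\gamma$ at $0$ by replacing $\gamma$ with a sufficiently high iterate $\gamma^m$, which contracts as strongly as we like, and then using an algebraic identity relating $\mathrm{Id}-(\gamma^*)^m$ to $\mathrm{Id}-\gamma^*$ via the factorization
\[
\mathrm{Id}-(\gamma^*)^m = (\mathrm{Id}-\gamma^*)\circ\Big(\sum_{k=0}^{m-1}(\gamma^*)^k\Big) = \Big(\sum_{k=0}^{m-1}(\gamma^*)^k\Big)\circ(\mathrm{Id}-\gamma^*).
\]

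First I would record that for any holomorphic contraction $\gamma$ at $0$ (so $\gamma(0)=0$ and, by the Schwarz Lemma as invoked in the preliminaries, $|\gamma(z)|\le|z|$ on $\mathbb{B}$), the iterate $\gamma^m$ is again a holomorphic contraction at $0$, and moreover for every $R_0\in(0,1)$ and every $\varepsilon>0$ there is an $m$ with $|\gamma^m(z)|\le \varepsilon|z|$ on $\mathbb{B}_{R_0}$: indeed $\gamma^m(\overline{\mathbb{B}}_{R_0})$ shrinks to $\{0\}$ by the contraction property, so on that ball one can bound $|\gamma^m(z)|$ by $C_m|z|$ with $C_m\to0$ (using $d_0\gamma^m=(d_0\gamma)^m$ and $|d_0\gamma|\le 1$, plus a Taylor estimate, as in Lemma~\ref{control}). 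Fixing $p$ with $1\le p\le n$, choose $m$ large enough that the constant $C_m$ satisfies $(2C_m)^p\frac{n!}{(n-p)!}<1$; by Proposition~\ref{small contraction} (applied on a ball $\mathbb{B}_{R_0}$, using Remark~\ref{mutandis} and the usual identification $\Omega^p(\mathbb{B})\cong\Omega^p(\mathbb{B}_{R_0})$ via restriction, which is injective with dense image — or, more cleanly, just run the argument of Proposition~\ref{small contraction} directly with $\gamma^m$ in place of $\gamma$ on all of $\mathbb{B}$) the map $\mathrm{Id}-(\gamma^m)^* = \mathrm{Id}-(\gamma^*)^m$ is an isomorphism of $\Omega^p(\mathbb{B})$.

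Now I would conclude from the factorization. Injectivity of $\beta=\mathrm{Id}-\gamma^*$ is immediate, since $\mathrm{Id}-(\gamma^*)^m = (\sum_{k=0}^{m-1}(\gamma^*)^k)\circ(\mathrm{Id}-\gamma^*)$ is injective, hence so is its right factor $\mathrm{Id}-\gamma^*$. For surjectivity, let $\eta\in\Omega^p(\mathbb{B})$; since $\mathrm{Id}-(\gamma^*)^m$ is onto there is $\xi$ with $(\mathrm{Id}-(\gamma^*)^m)\xi=\eta$, and then the other factorization $\mathrm{Id}-(\gamma^*)^m=(\mathrm{Id}-\gamma^*)\circ(\sum_{k=0}^{m-1}(\gamma^*)^k)$ gives $\beta\big(\sum_{k=0}^{m-1}(\gamma^*)^k\xi\big)=\eta$, so $\beta$ is onto. (Alternatively, one can avoid factorization entirely: the explicit series $S_\eta=\sum_{k\ge 0}(\gamma^*)^k\eta$ from the proof of Proposition~\ref{small contraction} still converges uniformly on compact subsets of $\mathbb{B}$ once one replaces the single-step estimate by the estimate for $m$-fold iterates obtained above, since $\|( (\gamma^*)^{km}\eta)_J\|_{L^\infty(\overline{\mathbb{B}}_{R_0})}$ decays geometrically in $k$, and grouping the series into blocks of length $m$ gives a dominating convergent series; then $\beta(S_\eta)=\eta$ directly.)

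The only genuine subtlety — and hence the main thing to get right — is the passage from "$\gamma$ is a contraction" to the quantitative bound $|\gamma^m(z)|\le C_m|z|$ with $C_m$ arbitrarily small on a fixed $\mathbb{B}_{R_0}$: one must be careful that the contraction property gives uniform convergence $\gamma^m\to 0$ on compact sets (true since $\gamma^m(\overline{\mathbb{B}}_{R_0})$ is a nested sequence of compacta shrinking to the attracting fixed point $0$), and then combine this with the linear bound near $0$ coming from $|d_0\gamma|\le1$ and Cauchy estimates. Everything else is the bookkeeping of the geometric-series identity and an appeal to Proposition~\ref{small contraction}; the $p=0$ statement is handled separately and is already explained in the surrounding text (the cokernel is spanned by the class of a nonzero constant, since $\gamma^*$ fixes constants), so it need not be re-proved here.
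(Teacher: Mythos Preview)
Your proposal is correct and follows essentially the same route as the paper's proof: pass to a high iterate $\gamma^m$ so that Proposition~\ref{small contraction} applies to $\mathrm{Id}-(\gamma^*)^m$, and then deduce bijectivity of $\mathrm{Id}-\gamma^*$ from the two factorizations $\mathrm{Id}-(\gamma^*)^m=(\mathrm{Id}-\gamma^*)\circ\sum_{k<m}(\gamma^*)^k=\sum_{k<m}(\gamma^*)^k\circ(\mathrm{Id}-\gamma^*)$. The only simplification worth noting is that the paper takes ``contraction'' to mean $|\gamma(z)|\le C|z|$ for some $C<1$ (see the proof of Proposition~\ref{p=0}), so $|\gamma^m(z)|\le C^m|z|$ immediately gives the small constant you need, and the Taylor/operator-norm discussion you flag as a subtlety is unnecessary.
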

\begin{proof}
From Lemma~\ref{control} and Proposition~\ref{small contraction}, it follows that if $d \gg 1$ and $1\le p \le n$, the endomorphism 
$$\mathrm{Id} - (\gamma^*)^d:\Omega^p(\mathbb{B}) \to \Omega^p(\mathbb{B})$$
is an isomorphism.
By the decomposition
\begin{align*}
    \mathrm{Id} - (\gamma^*)^d 
&= (\mathrm{Id} - \gamma^*)
\bigl(\mathrm{Id} + \gamma^* + \ldots + (\gamma^*)^{d-1}\bigr)\\
&= \bigl(\mathrm{Id} + \gamma^* + \ldots + (\gamma^*)^{d-1}\bigr)
(\mathrm{Id} - \gamma^*),
\end{align*}
we deduce that $\mathrm{Id} - \gamma^*$ is also an isomorphism.
\end{proof}

The following Proposition~\ref{p=0} concludes the proof of Theorem~\ref{beta}.

\begin{prop}\label{p=0}
    If $\gamma:\mathbb B\to\mathbb B$ is a holomorphic contraction at $0$, then the image of the map
\begin{align*}
    \beta : \mathcal{O}(\mathbb{B}) &\to \mathcal{O}(\mathbb{B})\\
f &\mapsto f - \gamma^*f
\end{align*}
is given by the holomorphic functions on $\mathbb{B}$ vanishing at zero.
\end{prop}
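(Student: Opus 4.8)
The plan is to show the two inclusions separately, using the already-established Fredholm/injectivity facts together with the contraction property of $\gamma$. First I would observe that, since $\gamma(0)=0$, for any $f\in\mathcal O(\mathbb B)$ we have $(\gamma^*f)(0)=f(\gamma(0))=f(0)$, so $\beta(f)(0)=f(0)-f(0)=0$. Hence $\operatorname{im}\beta$ is contained in the subspace $\mathcal O_0(\mathbb B)$ of holomorphic functions vanishing at $0$; this is the easy inclusion.

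For the reverse inclusion, let $g\in\mathcal O_0(\mathbb B)$; I want to produce $f\in\mathcal O(\mathbb B)$ with $f-\gamma^*f=g$. The natural candidate is the telescoping series $f:=\sum_{m=0}^{\infty}(\gamma^*)^m g=\sum_{m=0}^{\infty} g\circ\gamma^{m}$, since formally $f-\gamma^*f$ telescopes to $g$ once convergence is known. The key estimate is that $g$ vanishes at $0$, so by the Schwarz lemma (the $p=0$ analogue of the estimates in Proposition~\ref{small contraction}) there is, on a small ball $\mathbb B_R$, a bound $|g(z)|\le K|z|$ for some constant $K$; combined with $|\gamma^{m}(z)|\le C^{m}|z|$ — which holds eventually, since $\gamma$ is a contraction at $0$ so after finitely many iterations everything lands in a ball where $|\gamma(z)|\le\lambda|z|$ with $\lambda<1$ — one gets $|g\circ\gamma^{m}(z)|\le K\lambda^{m}R$ there, giving uniform convergence near $0$. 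To get convergence on all of $\mathbb B$, fix $\overline{\mathbb B}_{R_0}\subset\mathbb B$, choose $r_0$ with $\gamma^{r_0}(\overline{\mathbb B}_{R_0})\subset\mathbb B_R$, split the sum as $\sum_{m<r_0}+\sum_{m\ge r_0}$, bound the finitely many initial terms by $\|g\|_{L^\infty(\overline{\mathbb B}_{R_0})}$ and the tail using the preceding estimate applied to $\gamma^{m-r_0}$ of a point already in $\mathbb B_R$; this is exactly the bookkeeping already carried out in \eqref{Mtest}. By the Weierstrass M-test the series converges uniformly on $\overline{\mathbb B}_{R_0}$, hence locally uniformly on $\mathbb B$, so $f\in\mathcal O(\mathbb B)$ by \cite[Corollary~2.2.4]{Hormander1990}, and then $\beta(f)=g$ by telescoping.

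Alternatively — and more in the spirit of the remark on Fredholmness — one may argue that $\mathrm{Id}-\gamma^*$ is Fredholm of index zero on $\mathcal O(\mathbb B)$ with trivial kernel (a $\gamma^*$-invariant holomorphic function is constant on the orbit closures accumulating at $0$, hence equal to its value at $0$, so $\ker(\mathrm{Id}-\gamma^*)$ consists of the constants and has dimension one; wait — here $p=0$ so the kernel is the constants, dimension one, and by index zero the cokernel also has dimension one), so that $\operatorname{im}\beta$ is a codimension-one closed subspace; since it is contained in the codimension-one subspace $\mathcal O_0(\mathbb B)$, the two coincide. The main obstacle is the convergence of the series on \emph{all} of $\mathbb B$ rather than just near $0$: $\gamma$ only contracts near the origin, so one cannot bound $|g\circ\gamma^{m}|$ uniformly in one step, and the splitting-at-$r_0$ device together with the Schwarz-type estimate $|g(z)|\le K|z|$ (which crucially uses $g(0)=0$) is what makes it work. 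I would lead with the direct series argument, as it is self-contained and parallels Proposition~\ref{small contraction} verbatim.
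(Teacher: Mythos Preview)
Your proposal is correct and follows essentially the same route as the paper: the easy inclusion via $\gamma(0)=0$, then the telescoping series $f_g=\sum_{m\ge 0}(\gamma^*)^m g$, using $g(0)=0$ to obtain a linear bound $|g(z)|\lesssim |z|$ and the contraction property to get geometric decay, with convergence justified by the Weierstrass M-test. The only cosmetic difference is that the paper takes as part of the hypothesis the \emph{global} estimate $|\gamma(z)|\le C|z|$ on all of $\mathbb{B}$ with $C<1$ (and obtains the linear bound on $g$ via the factorization $g(z)=\sum_i z_i g_i(z)$), which makes your $r_0$-splitting device unnecessary; your version is slightly more cautious about where the contraction holds, but this does not change the argument.
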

\begin{proof}
If $f \in \mathcal{O}(\mathbb{B})$, then $\beta(f)(0) = f(0) - f(\gamma(0)) = 0$, so one inclusion is clear.\\
Let $g \in \mathcal{O}(\mathbb{B})$ be a holomorphic function on $\mathbb{B}$ such that $g(0)=0$.
We will prove the uniform convergence of the series
\begin{equation}
f_g := \sum_{m=0}^\infty (\gamma^*)^m g
\end{equation}
on compact subsets of $\mathbb{B}$.
This will show that $f_g$ is a holomorphic function on $\mathbb{B}$, and consequently $\beta(f_g) = g$.\\
Let $R\in(0,1)$ be fixed. We prove the uniform convergence of the
series on $\overline{\mathbb{B}}_R \subset \mathbb{B}$.\\
Since $g(0)=0 $, there exist holomorphic functions $g_1, g_2, \ldots, g_n \in \mathcal{O}(\mathbb{B})$ such that
\[
g(z) = \sum_{i=1}^n z_i g_i(z),
\]  
see, for instance, \cite[Theorem 1.1.17]{Huybrechts}.
Set
\[
M\equiv M(R) := \max_{1\le i\le n} \| g_i \|_{L^\infty(\overline{\mathbb{B}}_R)}.
\]  
Since $\gamma$ is a contraction, there exists $C \in (0,1)$ such that
\[
|\gamma(z)| \le C |z| \quad \forall\,z \in \mathbb{B}.
\]
Then, for any $z \in \overline{\mathbb{B}}_R$,
\begin{align}\label{Mtest2}
\left|(\gamma^*)^m g(z)\right| 
&= |g\big(\gamma^m(z)\big)| \notag\\
&\le \sum_{i=1}^n \big|\big(\gamma^m(z)\big)_i\ g_i\big(\gamma^m(z)\big)\big| \notag\\
&\le M n |\gamma^m(z)| \notag\\
&\le M n C^m.
\end{align}  
Since $C < 1$, the geometric series given by the terms on the right-hand side of (\ref{Mtest2}) is convergent. By the Weierstrass M-test, the series $\sum_{m=0}^{+\infty} (\gamma^*)^m g$ converges uniformly on 
$\overline{\mathbb{B}}_R$. From the arbitrariness of $R \in (0,1)$, we conclude that $f_g$ converges uniformly on compact subsets of $\mathbb{B}$, and therefore (\cite[Corollary 2.2.4]{Hormander1990}) defines a holomorphic function $f_g \in \mathcal{O}(\mathbb{B})$.
\end{proof}

\section{Hodge decomposition}\label{finalsection}
In this section, we prove the Hodge decomposition for Kato manifolds. To this end, we compare the cohomology of a Kato manifold with that of the space 
\(\widehat{\mathbb{CP}^n}\) induced by the Kato modification, on which the Hodge decomposition is known to hold.\\

Let $X(\pi,\sigma)$ be a Kato manifold with proper modification 
$\pi : \widehat{\mathbb B} \to \mathbb B$ and let $\widehat H$ denote the modification of the primary Hopf manifold $H$ obtained from $X$ via small deformation \cite[Theorem 1]{Kato77}. As noted in \cite{IOPR22}, any proper modification 
$\pi : \widehat{\mathbb B} \to \mathbb B$ naturally induces proper modifications 
$\pi : \widehat{\mathbb{C}^n} \to \mathbb C^n$ and 
$\pi : \widehat{\mathbb{CP}^n} \to \mathbb{CP}^n$, 
where $\widehat{\mathbb{CP}^n}$ is the compactification of 
$\widehat{\mathbb C^n}$ by adding a hyperplane at infinity. 
In particular, we have natural holomorphic embeddings that give the following commutative diagram:
\begin{equation}
\begin{tikzcd}
\widehat{\mathbb B} \arrow[r, hook ] \arrow[d, "\pi"'] & \widehat{\mathbb{CP}^n} \arrow[d, "\pi"] \\
\mathbb B \arrow[r, hook] & \mathbb{CP}^n.
\end{tikzcd}
\end{equation}

In the next proposition, we relate certain Hodge and Betti numbers of $\widehat{\mathbb{CP}^n}$ to those of $\widehat{\mathbb{B}}$.

\begin{prop}\label{noWFT}
For any \(p\) and any \(1\le q \le n-2 \), we have
\[
h^{p,q}(\widehat{\mathbb{CP}^n}) 
    =  h^{p,q}(\mathbb{CP}^n)+h^{p,q}(\widehat{\mathbb{B}}).
\]
Moreover, for any \(1\le k \le 2n-2 \), we have 
\[
b_k(\widehat{\mathbb{CP}^n}) 
    = b_k(\mathbb{CP}^n)+b_k(\widehat{\mathbb{B}}).
\]
\end{prop}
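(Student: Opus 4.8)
The plan is to compute the cohomology of $\widehat{\mathbb{CP}^n}$ by applying the Mayer--Vietoris sequence to a suitable open cover, exactly as was done for $X$ in Section~\ref{picture}, and then transport the already-established comparison between $V \cong \widehat{\mathbb{B}}\setminus\overline{\sigma(\mathbb{B})}$ and $\widehat{\mathbb{B}}$. Concretely, I would cover $\widehat{\mathbb{CP}^n}$ by two open sets: $V' := \widehat{\mathbb{CP}^n}\setminus \overline{\mathbb{B}_{1-\epsilon}}$, which is biholomorphic to $\mathbb{CP}^n\setminus\overline{\mathbb{B}_{1-\epsilon}}$ since the modification $\pi$ only affects a neighbourhood of the origin (so $V'$ avoids the exceptional set entirely), and $V'' := \pi^{-1}(\mathbb{B})\subset\widehat{\mathbb{CP}^n}$, which is precisely $\widehat{\mathbb{B}}$. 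Their intersection $V'\cap V''$ is an annulus $\mathcal{A}$ in $\mathbb{C}^n$, on which Lemma~\ref{annulus} gives $H^q(\mathcal{A},\Omega^p)=0$ for $1\le q\le n-2$, and on which Hartogs' extension theorem gives $\Omega^p(\mathcal{A})\cong\Omega^p(\mathbb{B})\cong\Omega^p(V'')$, so that the first map $\Omega^p(V')\oplus\Omega^p(V'')\to\Omega^p(\mathcal{A})$ in the Mayer--Vietoris sequence is already surjective on global sections.

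With these vanishings in place, the long exact Mayer--Vietoris sequence in sheaf cohomology degenerates, for each fixed $p$ and for $1\le q\le n-2$, into short exact sequences
\[
0 \to H^q(\widehat{\mathbb{CP}^n},\Omega^p) \to H^q(V',\Omega^p)\oplus H^q(V'',\Omega^p) \to 0,
\]
using $H^{q-1}(\mathcal{A},\Omega^p)=0$ on the left (the case $q=1$ uses the surjectivity on global sections noted above) and $H^q(\mathcal{A},\Omega^p)=0$ on the right. Hence $h^{p,q}(\widehat{\mathbb{CP}^n}) = h^{p,q}(V') + h^{p,q}(V'')$. Now $V''=\widehat{\mathbb{B}}$ gives the term $h^{p,q}(\widehat{\mathbb{B}})$. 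For the term $h^{p,q}(V')=h^{p,q}(\mathbb{CP}^n\setminus\overline{\mathbb{B}_{1-\epsilon}})$, I would run the same Mayer--Vietoris argument one more time on the cover of $\mathbb{CP}^n$ by $\mathbb{CP}^n\setminus\overline{\mathbb{B}_{1-\epsilon}}$ and the ball $\mathbb{B}$: the intersection is again an annulus, the same two inputs (Lemma~\ref{annulus} and Hartogs) apply verbatim, and $\mathbb{B}$ is Stein, so one obtains $h^{p,q}(\mathbb{CP}^n\setminus\overline{\mathbb{B}_{1-\epsilon}})=h^{p,q}(\mathbb{CP}^n)$ for $1\le q\le n-2$. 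Combining the two isomorphisms yields the first formula. For the Betti numbers, the identical two-step Mayer--Vietoris argument runs with de~Rham cohomology in place of Dolbeault cohomology: the annulus $\mathcal{A}\simeq S^{2n-1}$ has reduced cohomology concentrated in degree $2n-1$, hence $H^k_{dR}(\mathcal{A})=0$ for $1\le k\le 2n-2$, which is exactly the range needed to split the sequence and obtain $b_k(\widehat{\mathbb{CP}^n})=b_k(\mathbb{CP}^n\setminus\overline{\mathbb{B}_{1-\epsilon}})+b_k(\widehat{\mathbb{B}})$ and then $b_k(\mathbb{CP}^n\setminus\overline{\mathbb{B}_{1-\epsilon}})=b_k(\mathbb{CP}^n)$, for $1\le k\le 2n-2$.

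The main obstacle is a bookkeeping point rather than a deep one: one must be careful that the open set $V'$ genuinely avoids the exceptional set of $\pi$ so that $V'\cong\mathbb{CP}^n\setminus\overline{\mathbb{B}_{1-\epsilon}}$, and that the annulus $V'\cap V''$ lies in the biholomorphic locus $\widehat{\mathbb{CP}^n}\setminus E \cong \mathbb{CP}^n\setminus\{0\}$, so that Lemma~\ref{annulus} (which is a statement about annuli in $\mathbb{C}^n$) applies on the nose. This is guaranteed because $\pi$ is a modification at the origin only, hence an isomorphism over $\mathbb{B}\setminus\overline{\mathbb{B}_{1-\epsilon}}$ for $\epsilon$ small. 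A second minor point is the endpoint $q=1$ (resp.\ $k=1$): there the left-hand vanishing $H^0$ is replaced by the surjectivity of $\Omega^p(V')\oplus\Omega^p(V'')\to\Omega^p(\mathcal{A})$, which as noted follows from Hartogs' theorem, so the short exact sequence still holds. Everything else is a formal consequence of the exactness of Mayer--Vietoris together with the vanishing ranges, exactly mirroring the computation already carried out in Section~\ref{picture}.
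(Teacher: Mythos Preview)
Your proposal is correct and follows essentially the same approach as the paper: both cover $\widehat{\mathbb{CP}^n}$ by the complement of a closed ball (identified with the corresponding open set in $\mathbb{CP}^n$) and the preimage of a slightly larger ball (identified with $\widehat{\mathbb{B}}$), note that the intersection is an annulus, apply Lemma~\ref{annulus} and Hartogs' theorem to split the Mayer--Vietoris sequence for $1\le q\le n-2$, and then run the identical argument on $\mathbb{CP}^n$ to identify $h^{p,q}(\mathbb{CP}^n\setminus\overline{\mathbb{B}})$ with $h^{p,q}(\mathbb{CP}^n)$; the de~Rham case is handled in both by the same argument using $\mathcal{A}\simeq S^{2n-1}$. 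The only difference is a cosmetic choice of radii for the cover.
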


\begin{proof}
We relate the Dolbeault cohomology of $\widehat{\mathbb{CP}^n}$ to that of 
$\widehat{\mathbb{B}}$ by applying the Mayer--Vietoris sequence in sheaf cohomology 
to the open cover
\[
\bigl(\widehat{\mathbb{CP}^n} \setminus \overline{\widehat{\mathbb{B}}},\,
      \widehat{\mathbb{B}}_2\bigr)
\]
of $\widehat{\mathbb{CP}^n}$, where $\widehat{\mathbb{B}}_2 := \pi^{-1}(\mathbb{B}_2)$.
By construction, we have the following identifications:
\begin{align*}
    \widehat{\mathbb{CP}^n} \setminus \overline{\widehat{\mathbb{B}}}
    &\equiv \mathbb{CP}^n \setminus \overline{\mathbb{B}};\\
    \bigl(\widehat{\mathbb{CP}^n} \setminus \overline{\widehat{\mathbb{B}}}\bigr)
    \cap \widehat{\mathbb{B}}_2
    &\equiv \mathcal{A},
\end{align*}
where $\mathcal{A} \equiv \mathbb{B}_2 \setminus \overline{\mathbb{B}}$ is an annulus.\\
In what follows, we will use the vanishing of the sheaf cohomology of \(\mathcal{A}\) proved in Lemma~\ref{annulus}, as well as the Dolbeault theorem, without explicitly recalling them each time.
If $2\le q \le n-2 $, the Mayer--Vietoris sequence yields
\begin{align*} 
H^{q-1}(\mathcal{A},\Omega^p)= 0 \longrightarrow H^q(\widehat{\mathbb{CP}^n},\Omega^p) \longrightarrow &H^q(\mathbb{CP}^n \setminus \overline{\mathbb{B}},\Omega^p) \oplus H^q(\widehat{\mathbb{B}}_2,\Omega^p)\\
\longrightarrow H^{q}(\mathcal{A},\Omega^p)= 0. \end{align*}
Applying the same sequence to the open cover 
\((\mathbb{CP}^n \setminus \overline{\mathbb{B}},\, \mathbb{B}_2)\) of 
$\mathbb{CP}^n$ we find
\[
H^q(\mathbb{CP}^n \setminus \overline{\mathbb{B}},\Omega^p)
    \cong H^q(\mathbb{CP}^n,\Omega^p).
\]
Thus,
\[
h^{p,q}(\widehat{\mathbb{CP}^n})
    =  h^{p,q}(\mathbb{CP}^n)+h^{p,q}(\widehat{\mathbb{B}}_2).
\]
We now consider the first part of the Mayer--Vietoris sequence.  
After applying Hartogs' extension theorem, we obtain
\begin{align*}
0 \longrightarrow\,
&\Omega^p(\widehat{\mathbb{CP}^n})
\longrightarrow
\Omega^p(\mathbb{CP}^n)
\oplus
\Omega^p(\widehat{\mathbb{B}}_2)
\xrightarrow{\ \beta\ }
\Omega^p(\mathbb{B}_2)
\\
&\longrightarrow
H^1(\widehat{\mathbb{CP}^n},\Omega^p)
\longrightarrow
H^1(\mathbb{CP}^n \setminus \overline{\mathbb{B}},\Omega^p)
\oplus
H^1(\widehat{\mathbb{B}}_2,\Omega^p)
\longrightarrow 0 .
\end{align*}
As before,
\[
H^1(\mathbb{CP}^n \setminus \overline{\mathbb{B}},\Omega^p)
    \cong H^1(\mathbb{CP}^n,\Omega^p).
\]
Moreover, the map $\beta$ is always surjective and for $p\neq 0$ is an isomorphism.  
Therefore, the sequence above shows that
\[
h^{p,1}(\widehat{\mathbb{CP}^n})
    = h^{p,1}(\mathbb{CP}^n)+h^{p,1}(\widehat{\mathbb{B}}_2),
\]
which concludes the proof for the Hodge numbers.

Since an annulus $\mathcal{A}$ retracts onto the sphere
$\mathbb{S}^{2n-1}$, its de Rham cohomology vanishes in all degrees
$1\le k\le 2n-2 $. Applying the Mayer--Vietoris sequence for de Rham cohomology to the same open covers used above, both for $\widehat{\mathbb{CP}^n}$ and for
$\mathbb{CP}^n$, we obtain, for any $2\le k\le 2n-2 $,
\begin{align*}
    0 = H^{k-1}(\mathcal{A})
   \longrightarrow H^{k}(\widehat{\mathbb{CP}^n})
   \longrightarrow
     H^{k}\!(\mathbb{CP}^n\setminus
                   \overline{\mathbb{B}})
     \oplus
     H^{k}(\widehat{\mathbb{B}}_2)
   \longrightarrow H^{k}(\mathcal{A}) = 0
\end{align*}
and
\[
0 \longrightarrow H^{k}(\mathbb{CP}^n)
   \longrightarrow H^{k}(\mathbb{CP}^n\setminus\overline{\mathbb{B}})
   \longrightarrow 0.
\]
We then conclude that, for \(2\le k\le 2n-2 \),
\[
b_k(\widehat{\mathbb{CP}^n})
    = b_k(\mathbb{CP}^n)+b_k(\widehat{\mathbb{B}}_2) .
\]
For $k = 1$, the first part of the Mayer--Vietoris sequence gives
\begin{align*}
    0 \longrightarrow H^{0}(\widehat{\mathbb{CP}^n}) \longrightarrow 
&H^{0}(\mathbb{CP}^n\setminus\overline{\mathbb{B}})\oplus H^{0}(\widehat{\mathbb{B}}_2) \xrightarrow{\beta} 
H^{0}(\mathcal{A}) \\
&\longrightarrow H^{1}(\widehat{\mathbb{CP}^n}) \longrightarrow
H^{1}(\mathbb{CP}^n\setminus\overline{\mathbb{B}})\oplus H^{1}(\widehat{\mathbb{B}}_2) \longrightarrow 0,
\end{align*}
where $\beta$ is surjective and, as before, $H^{1}(\mathbb{CP}^n\setminus\overline{\mathbb{B}})\cong H^{1}(\mathbb{CP}^n)$.
Then we find 
\[
b_1(\widehat{\mathbb{CP}^n})
    = b_1(\mathbb{CP}^n)+b_1(\widehat{\mathbb{B}}_2) .
\]

\end{proof}

The arguments used in Proposition~\ref{noWFT} are more general and do not depend on the specific structure of $\widehat{\mathbb{CP}^n}$; this leads to the following result:

\begin{prop}\label{generalization}
Let $X$ be a compact complex manifold, and let 
$\pi : \widehat{X} \to X$ be a proper holomorphic modification of $X$ at a point 
$p \in X$.  
Let $\mathbb{B}$ be a ball in $X$ centered at $p$, and let 
$\pi : \widehat{\mathbb{B}} \to \mathbb{B}$ denote the induced modification on \(\mathbb{B}\).  
Then, for any $p$ and any $1\le q\le n-2 $, we have
\[
h^{p,q}(\widehat{X})
    =h^{p,q}(X)+ h^{p,q}(\widehat{\mathbb{B}}).
\]
Moreover, for any $1\le k\le 2n-2 $, we have
\[b_k(\widehat{X})=b_k(X)+b_k(\widehat{\mathbb{B}}).\]
\end{prop}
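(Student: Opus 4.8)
\textbf{Proof plan for Proposition~\ref{generalization}.}
The strategy is to repeat the Mayer--Vietoris argument of Proposition~\ref{noWFT} with the pair $(\mathbb{CP}^n, \widehat{\mathbb{CP}^n})$ replaced by $(X, \widehat{X})$. First I would fix a ball $\mathbb{B}$ centered at $p$ which is small enough that it is relatively compact in a coordinate chart of $X$, so that the modification $\pi:\widehat{X}\to X$ restricts to $\pi:\widehat{\mathbb{B}}\to\mathbb{B}$ and is a biholomorphism away from $\pi^{-1}(\mathbb{B})$. Introduce the slightly larger concentric ball $\mathbb{B}_2\Subset X$ (again inside the chart) and set $\widehat{\mathbb{B}}_2:=\pi^{-1}(\mathbb{B}_2)$. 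The two open covers to use are $\bigl(\widehat{X}\setminus\overline{\widehat{\mathbb{B}}},\,\widehat{\mathbb{B}}_2\bigr)$ of $\widehat{X}$ and $\bigl(X\setminus\overline{\mathbb{B}},\,\mathbb{B}_2\bigr)$ of $X$. As in the proposition, $\widehat{X}\setminus\overline{\widehat{\mathbb{B}}}\cong X\setminus\overline{\mathbb{B}}$ via $\pi$ (since $\pi$ is an isomorphism there), and both intersections are the annulus $\mathcal{A}\cong\mathbb{B}_2\setminus\overline{\mathbb{B}}$.

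Next I would run the Mayer--Vietoris sequence in sheaf cohomology with coefficients in $\Omega^p$. By Lemma~\ref{annulus} one has $H^{q}(\mathcal{A},\Omega^p)=0$ for $1\le q\le n-2$, so for $2\le q\le n-2$ the sequence gives $H^q(\widehat{X},\Omega^p)\cong H^q(X\setminus\overline{\mathbb{B}},\Omega^p)\oplus H^q(\widehat{\mathbb{B}}_2,\Omega^p)$, and the same sequence for the cover of $X$ gives $H^q(X\setminus\overline{\mathbb{B}},\Omega^p)\cong H^q(X,\Omega^p)$; hence $h^{p,q}(\widehat X)=h^{p,q}(X)+h^{p,q}(\widehat{\mathbb{B}}_2)$. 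For $q=1$ one uses instead the left end of the sequence: Hartogs' extension theorem identifies $\Omega^p(\mathcal{A})$ with $\Omega^p(\mathbb{B}_2)$, which makes the connecting map $\Omega^p(X\setminus\overline{\mathbb{B}})\oplus\Omega^p(\widehat{\mathbb{B}}_2)\to\Omega^p(\mathbb{B}_2)$ surjective (the $\Omega^p(\widehat{\mathbb{B}}_2)$ summand already surjects onto $\Omega^p(\widehat{\mathbb{B}})\cong\Omega^p(\mathbb{B}_2)$), exactly as in Proposition~\ref{noWFT}; comparing with the analogous sequence for $X$ yields $h^{p,1}(\widehat X)=h^{p,1}(X)+h^{p,1}(\widehat{\mathbb{B}}_2)$. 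Finally, since $\mathbb{B}_2\setminus\overline{\mathbb{B}}\Subset\mathbb{B}_2\Subset X$ and $\mathbb{B}_2$ is Stein, $\pi$ restricts to a modification $\widehat{\mathbb{B}}_2\to\mathbb{B}_2$ biholomorphic to the given $\widehat{\mathbb{B}}\to\mathbb{B}$ up to shrinking, so $h^{p,q}(\widehat{\mathbb{B}}_2)=h^{p,q}(\widehat{\mathbb{B}})$ for all $p,q$; alternatively one notes, as already done in the excerpt's transition from Theorem~\ref{Hodge numbers} to later sections, that the modification depends only on the germ at $p$. The de~Rham statement is identical, using that $\mathcal{A}$ retracts onto $\mathbb{S}^{2n-1}$, so $H^k(\mathcal{A})=0$ for $1\le k\le 2n-2$, together with $X\setminus\overline{\mathbb{B}}\cong\widehat X\setminus\overline{\widehat{\mathbb{B}}}$ and $H^k(X\setminus\overline{\mathbb{B}})\cong H^k(X)$ in the relevant range; the $k=1$ case again uses the left end of the sequence with the surjective connecting map.

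The only point requiring care — and the main obstacle — is that $X$ is now an arbitrary compact complex manifold rather than $\mathbb{CP}^n$, so one cannot invoke the explicit cohomology of $\mathbb{CP}^n$ nor its Stein-by-affine-chart structure; one must instead phrase everything intrinsically. Concretely, the step ``$H^q(X\setminus\overline{\mathbb{B}},\Omega^p)\cong H^q(X,\Omega^p)$'' must be justified purely from Mayer--Vietoris for the cover $(X\setminus\overline{\mathbb{B}},\mathbb{B}_2)$ together with $H^{q}(\mathcal{A},\Omega^p)=0$ and $H^{q}(\mathbb{B}_2,\Omega^p)=0$ (the latter since $\mathbb{B}_2$ is Stein, via Cartan's Theorem~B), plus Hartogs at the $q=1$ level — no global structure of $X$ is used, which is exactly the remark preceding the proposition. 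Once this bookkeeping is set up correctly, the proof is a verbatim copy of Proposition~\ref{noWFT}, and I would write it as such, citing Lemma~\ref{annulus}, Cartan's Theorem~B, and Hartogs' extension theorem at the appropriate places, and noting that $h^{p,q}(\widehat{\mathbb{B}}_2)=h^{p,q}(\widehat{\mathbb{B}})$ and $b_k(\widehat{\mathbb{B}}_2)=b_k(\widehat{\mathbb{B}})$.
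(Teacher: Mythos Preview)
Your proposal is correct and follows exactly the paper's approach: the paper does not give a separate proof of Proposition~\ref{generalization}, but simply observes that the Mayer--Vietoris argument of Proposition~\ref{noWFT} is intrinsic and does not use any special feature of $\mathbb{CP}^n$. Your write-up makes explicit precisely the points the paper leaves implicit (Cartan's Theorem~B for $H^q(\mathbb{B}_2,\Omega^p)=0$, the surjectivity of $\beta$ via Hartogs at the $q=1$ level, and the identification $h^{p,q}(\widehat{\mathbb{B}}_2)=h^{p,q}(\widehat{\mathbb{B}})$), so there is nothing to add.
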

As a direct application of Proposition~\ref{generalization}, we obtain the following corollary:

\begin{cor}\label{HodgeBetti}
    Let $X$ be a compact complex manifold, and let 
$\pi : \widehat{X} \to X$ be a proper holomorphic modification of $X$ at a point 
$p \in X$.  
Let $\widehat{\mathbb{CP}^n}$ be the modification at a point of $\mathbb{CP}^n$ induced by $\pi$.
Then, for any $p$ and any $q$, we have
\[
h^{p,q}(\widehat{X})
    = h^{p,q}(X)
      + h^{p,q}(\widehat{\mathbb{CP}^n})-h^{p,q}(\mathbb{CP}^n).
\]
Moreover, for any $k$, we have
\[b_k(\widehat{X})=b_k(X)
      + b_k(\widehat{\mathbb{CP}^n})-b_k(\mathbb{CP}^n).\]
\end{cor}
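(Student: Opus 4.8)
The plan is to deduce both identities from Proposition~\ref{generalization}, applied simultaneously to $X$ and to $\mathbb{CP}^n$, and then to settle by duality and bimeromorphic invariance the finitely many bidegrees $q\in\{0,n-1,n\}$ and degrees $k\in\{0,2n-1,2n\}$ that fall outside the ranges covered by that proposition.

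First I would fix a ball $\mathbb{B}\subset X$ centered at $p$ and identify a ball around the chosen point of $\mathbb{CP}^n$ with this same $\mathbb{B}$, so that the induced modification $\pi:\widehat{\mathbb{B}}\to\mathbb{B}$ entering Proposition~\ref{generalization} for $X$ and the one entering it for $\mathbb{CP}^n$ are the same; this is exactly how $\widehat{\mathbb{CP}^n}$ is built from $\pi$. Proposition~\ref{generalization} then gives, for $1\le q\le n-2$,
\[
h^{p,q}(\widehat{X})-h^{p,q}(X)=h^{p,q}(\widehat{\mathbb{B}})=h^{p,q}(\widehat{\mathbb{CP}^n})-h^{p,q}(\mathbb{CP}^n),
\]
and similarly $b_k(\widehat{X})-b_k(X)=b_k(\widehat{\mathbb{CP}^n})-b_k(\mathbb{CP}^n)$ for $1\le k\le 2n-2$; these are exactly the asserted formulas in those degrees. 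Since $n\ge 3$, the bidegree $q=1$ and the degree $k=1$ are included in these ranges.

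It remains to handle $q\in\{0,n-1,n\}$ and $k\in\{0,2n-1,2n\}$. For $q=0$: a proper modification is a bimeromorphism, so Corollary~\ref{p00q} gives $h^{p,0}(\widehat{X})=h^{p,0}(X)$ and $h^{p,0}(\widehat{\mathbb{CP}^n})=h^{p,0}(\mathbb{CP}^n)$, whence the right-hand side of the claimed identity collapses to $h^{p,0}(X)$, which equals the left-hand side. For $q=n$ I would take the already-proven identity in bidegree $(n-p,0)$ and apply Serre duality $h^{a,b}(Y)=h^{n-a,n-b}(Y)$ to each of the four terms, turning it into the identity in bidegree $(p,n)$; for $q=n-1$ the same maneuver starting from bidegree $(n-p,1)$ works, and $(n-p,1)$ is legitimate since $1\le n-2$. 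For the Betti numbers: $k=0$ and $k=2n$ are trivial since each manifold involved is connected, compact and canonically oriented, so $b_0=b_{2n}=1$ and the formula reads $1=1+1-1$; for $k=2n-1$ one starts from the identity in degree $1$ and applies Poincar\'e duality $b_j(Y)=b_{2n-j}(Y)$ to each term. Collecting the cases yields the corollary.

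There is no genuinely hard step here, as the analytic input is entirely packaged in Proposition~\ref{generalization}; the only points requiring care are the observation that the intermediate modification $\widehat{\mathbb{B}}\to\mathbb{B}$ is common to the two applications of that proposition, and the duality bookkeeping that reduces $q=n$ to $q=0$ and $q=n-1$ to $q=1$ (respectively $k=2n-1$ to $k=1$).
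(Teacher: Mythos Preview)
Your proposal is correct and follows essentially the same route as the paper: apply Proposition~\ref{generalization} twice (to $\widehat{X}$ and to $\widehat{\mathbb{CP}^n}$) to cover $1\le q\le n-2$ and $1\le k\le 2n-2$, then use Corollary~\ref{p00q} for $q=0$ and Serre/Poincar\'e duality to reduce $q=n-1,n$ and $k=2n-1,2n$ to the already-established cases. The paper's write-up is terser for the Betti numbers (it just says ``analogous arguments''), but your explicit case split is exactly what is meant.
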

\begin{proof}
We start with the Hodge numbers. 
For \(1 \leq q \leq n-2\), the statement follows directly from 
Proposition~\ref{generalization}, applied twice: once to \(\widehat{X}\) and once to 
\(\widehat{\mathbb{CP}^n}\).
For \(q = n-1\), the result follows from Serre duality and the case \(q=1\):
\begin{align*}
h^{p,n-1}(\widehat{X})
    &= h^{n-p,1}(\widehat{X}) \\
    &=  h^{n-p,1}(X)
        + h^{n-p,1}(\widehat{\mathbb{CP}^n})
        - h^{n-p,1}(\mathbb{CP}^n) \\
    &= h^{p,n-1}(X)
        + h^{p,n-1}(\widehat{\mathbb{CP}^n})
        - h^{p,n-1}(\mathbb{CP}^n).
\end{align*}
For \(q = 0\), the claim follows immediately from Corollary~\ref{p00q}, 
and applying Serre duality yields the case \(q = n\).\\
Using analogous arguments, we obtain the corresponding relation for the Betti numbers.

\end{proof}

The next step is to compare the Hodge and Betti numbers of $X$ to those of $\widehat{\mathbb{CP}^n}$.

\begin{thm}\label{Hodge numbers21}
   Let $X = X(\pi,\sigma)$ be a Kato manifold with Kato modification 
$\pi : \widehat{\mathbb{B}} \to \mathbb{B}$. 
Then the Hodge numbers of $X$ are given by:
\[
h^{p,q}(X)=
\begin{cases}
1, & (p,q)\in\{(0,0),(0,1),(n,n-1),(n,n)\}\\
h^{p,q}(\widehat{\mathbb{CP}^n}) - h^{p,q}(\mathbb{CP}^n), &\text{otherwise}.
\end{cases}
\]
Moreover, the Betti numbers of $X$ satisfy:
\[
b_k(X)=
\begin{cases}
1, & k\in \{0,1,2n-1,2n\}\\
b_k(\widehat{\mathbb{CP}^n}) - b_k(\mathbb{CP}^n),& \text{otherwise}.
\end{cases}
\]
\end{thm}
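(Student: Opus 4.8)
The plan is to assemble the statement from three facts already in hand --- Theorem~\ref{Hodge numbers}, which expresses $h^{p,q}(X)$ through $h^{p,q}(\widehat{\mathbb{B}})$ and trivial values; Proposition~\ref{noWFT}, which relates $h^{p,q}(\widehat{\mathbb{B}})$ and $b_k(\widehat{\mathbb{B}})$ to the cohomology of $\widehat{\mathbb{CP}^n}$ and $\mathbb{CP}^n$ in a range of degrees; and Corollary~\ref{p00q}, the bimeromorphic invariance of $H^{p,0}$ and $H^{0,q}$ --- together with Serre duality on the compact manifolds $X$, $\widehat{\mathbb{CP}^n}$, $\mathbb{CP}^n$, Poincaré duality on $X$, and the Frölicher inequality. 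The single new observation needed is the vanishing $h^{0,1}(\widehat{\mathbb{B}})=0$: since $\widehat{\mathbb{CP}^n}$ and $\mathbb{CP}^n$ are bimeromorphic, Corollary~\ref{p00q} gives $h^{0,1}(\widehat{\mathbb{CP}^n})=h^{0,1}(\mathbb{CP}^n)=0$, while Proposition~\ref{noWFT} (applicable in bidegree $(0,1)$ because $1\le 1\le n-2$ as $n\ge 3$) gives $h^{0,1}(\widehat{\mathbb{CP}^n})=h^{0,1}(\mathbb{CP}^n)+h^{0,1}(\widehat{\mathbb{B}})$, forcing the vanishing.

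I would then treat the Hodge numbers by a case analysis on $q$. With $h^{0,1}(\widehat{\mathbb{B}})=0$ in hand, Theorem~\ref{Hodge numbers} immediately produces the four exceptional values $h^{p,q}(X)=1$ for $(p,q)\in\{(0,0),(0,1),(n,n-1),(n,n)\}$, using the symmetries $h^{n,n-1}(X)=h^{0,1}(X)$ and $h^{n,n}(X)=h^{0,0}(X)=1$ contained in that theorem. For $2\le q\le n-2$, Theorem~\ref{Hodge numbers} gives $h^{p,q}(X)=h^{p,q}(\widehat{\mathbb{B}})$, and Proposition~\ref{noWFT} rewrites this as $h^{p,q}(\widehat{\mathbb{CP}^n})-h^{p,q}(\mathbb{CP}^n)$. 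For $q=1$ and $p\ne 0$ the same two results apply verbatim. For $q=0$ and $p\ne 0$, Theorem~\ref{Hodge numbers} gives $0$, and Corollary~\ref{p00q} shows that $h^{p,0}(\widehat{\mathbb{CP}^n})-h^{p,0}(\mathbb{CP}^n)$ also vanishes. Finally, for $q=n-1$ (resp.\ $q=n$) with $p\ne n$, Serre duality on $X$ reduces to the already-treated case $q=1$, $p'=n-p$ (resp.\ $q=0$), and Serre duality on $\widehat{\mathbb{CP}^n}$ and $\mathbb{CP}^n$ converts the answer back to bidegree $(p,n-1)$ (resp.\ $(p,n)$). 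These cases exhaust all $(p,q)$.

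For the Betti numbers, $b_0(X)=b_{2n}(X)=1$ because $X$ is compact, connected and oriented. Since Kato manifolds admit a $\mathbb{Z}$-cover, $H^1(X;\mathbb{Z})\ne 0$ and hence $b_1(X)\ge 1$; the Frölicher inequality gives the reverse bound $b_1(X)\le h^{1,0}(X)+h^{0,1}(X)=0+1$, so $b_1(X)=1$, and then $b_{2n-1}(X)=b_1(X)=1$ by Poincaré duality. For $2\le k\le 2n-2$ I would run the de~Rham analogue of the Mayer--Vietoris computation of Section~\ref{picture}: for the cover $X=U\cup V$, with $U$ a spherical shell and $U\cap V=U_1\sqcup U_2$ a disjoint union of two spherical shells each homotopy equivalent to $S^{2n-1}$, one has $H^{k-1}_{dR}(U\cap V)=H^k_{dR}(U\cap V)=H^k_{dR}(U)=0$ throughout this range (using $n\ge 3$), so $H^k_{dR}(X)\cong H^k_{dR}(V)$; a second Mayer--Vietoris sequence for $\widehat{\mathbb{B}}$ covered by a ball and $V$ (the de~Rham version of Lemma~\ref{V}) gives $H^k_{dR}(V)\cong H^k_{dR}(\widehat{\mathbb{B}})$; and Proposition~\ref{noWFT} concludes with $b_k(\widehat{\mathbb{B}})=b_k(\widehat{\mathbb{CP}^n})-b_k(\mathbb{CP}^n)$.

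There is no deep obstacle: the argument is essentially bookkeeping, and the one thing that demands care is that Theorem~\ref{Hodge numbers}, Proposition~\ref{noWFT} and the two de~Rham Mayer--Vietoris sequences are each only valid in restricted degree ranges, so one must check that the cases considered (the four exceptional bidegrees; $2\le q\le n-2$; $q=1$; $q\in\{0,n-1,n\}$; and for the Betti numbers $k\in\{0,1,2n-1,2n\}$ versus $2\le k\le 2n-2$) genuinely cover everything, which works precisely because $n\ge 3$. A secondary point worth stressing is that $b_1(X)=1$ must be obtained independently of the Mayer--Vietoris sequence, since the latter only controls $b_k$ for $2\le k\le 2n-2$.
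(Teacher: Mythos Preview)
Your treatment of the Hodge numbers is essentially identical to the paper's: both assemble Theorem~\ref{Hodge numbers}, Proposition~\ref{noWFT}, Corollary~\ref{p00q}, and Serre duality into a case analysis, and both isolate the vanishing $h^{0,1}(\widehat{\mathbb{B}})=0$ as the point that pins down $h^{0,1}(X)=1$. The organization differs slightly (you split on $q$, the paper lists individual bidegrees), but the logic is the same and your case coverage is complete.

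For the Betti numbers your route is genuinely different. The paper does \emph{not} run a de~Rham Mayer--Vietoris on the Kato cover $\{U,V\}$; instead it invokes Kato's deformation theorem to obtain a diffeomorphism $X\cong\widehat{H}$ with a modification of a primary Hopf manifold, then applies Corollary~\ref{HodgeBetti} to $\widehat{H}$ together with the known Betti numbers of $H$. Your argument avoids the deformation entirely: you use that $U$, $U_1$, $U_2$ are homotopy equivalent to $S^{2n-1}$, so the de~Rham Mayer--Vietoris sequences for $X=U\cup V$ and for $\widehat{\mathbb{B}}$ covered by a ball and $V$ collapse in the range $2\le k\le 2n-2$, giving $b_k(X)=b_k(V)=b_k(\widehat{\mathbb{B}})$ directly, after which Proposition~\ref{noWFT} finishes. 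This is correct and arguably more elementary, since it stays within the Mayer--Vietoris framework already set up for Dolbeault cohomology and does not appeal to the external deformation result. The paper's approach, on the other hand, simultaneously yields the comparison with $\widehat{H}$ used later in Theorem~\ref{samenumbers}. One minor remark: the Fr\"olicher step in your $b_1$ computation is unnecessary, since the universal cover being a $\mathbb{Z}$-cover gives $\pi_1(X)\cong\mathbb{Z}$ and hence $b_1(X)=1$ outright; but your argument is of course still valid.
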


\begin{proof}
We start by comparing the Hodge numbers.  
Using Corollary~\ref{p00q}, Theorem~\ref{Hodge numbers}, Proposition~\ref{noWFT}, and Serre duality, we obtain the desired relations between the  
Hodge numbers of \(X\) and those of \(\widehat{\mathbb{CP}^n}\):
\begin{align*}
    h^{1,n-1}(X)&\overset{\ref{Hodge numbers}}{=} h^{n-1,1}(\widehat{\mathbb{B}})\\
    &\overset{\ref{noWFT}}{=}h^{n-1,1}(\widehat{\mathbb{CP}^n})
    = h^{1,n-1}(\widehat{\mathbb{CP}^n});\\
      h^{0,n-1}(X)&\overset{\ref{Hodge numbers}}{=} h^{n,1}(\widehat{\mathbb{B}})\\
    &\overset{\ref{noWFT}}{=} h^{n,1}(\widehat{\mathbb{CP}^n})
    = h^{0,n-1}(\widehat{\mathbb{CP}^n});\\
    h^{0,n}(X) &\overset{\ref{Hodge numbers}}{=} 0 \overset{\ref{p00q}}{=} h^{0,n}(\widehat{\mathbb{CP}^n});\\
    h^{k,0}(X) &\overset{\ref{Hodge numbers}}{=} 0 \overset{\ref{p00q}}{=} h^{k,0}(\widehat{\mathbb{CP}^n})
    \qquad \text{for } 1\le k\le n ;\\
    h^{0,1}(X)
    &\overset{\ref{Hodge numbers}}{=} 1 + h^{0,1}(\widehat{\mathbb{B}})\\
    &\overset{\ref{noWFT}}{=} 1 + h^{0,1}(\widehat{\mathbb{CP}^n})
    \overset{\ref{p00q}}{=} 1;
\end{align*}
and for any $2\le p+q\le n$ with $q\ne 0,n-1,n$,
\begin{align*}
    h^{p,q}(X)
    &\overset{\ref{Hodge numbers}}{=} h^{p,q}(\widehat{\mathbb{B}})\\
    &\overset{\ref{noWFT}}{=} h^{p,q}(\widehat{\mathbb{CP}^n})
      - h^{p,q}(\mathbb{CP}^n).
\end{align*}
Let \(\widehat{H}\) be the modification of a primary Hopf manifold \(H\) obtained via small deformation from \(X\). Since \(\widehat{H}\) and \(X\) are diffeomorphic, they have the same Betti numbers. Recall also (see, $\textsl{e.g.}$, \cite{istrati2025propertieshopfmanifolds}) that the Betti numbers \(b_k\) of a primary Hopf manifold are equal to one for \(k = 0,1,2n-1,2n\) and zero otherwise.  
For \(2\le k \le 2n-2 \), Corollary~\ref{HodgeBetti} gives
\begin{align*}
    b_k(X) &= b_k(\widehat{H}) \\
    &\overset{\ref{HodgeBetti}}{=}  b_k(H)+ b_k(\widehat{\mathbb{CP}^n}) -b_{k}(\mathbb{CP}^n)=b_k(\widehat{\mathbb{CP}^n}) -b_{k}(\mathbb{CP}^n).
\end{align*}
Finally, we note that the first Betti number of \(X\) is equal to one, as its universal cover is a \(\mathbb{Z}\)-cover.
\end{proof}

\begin{rem}\label{corollario}
We point out that Theorem~\ref{Hodge numbers21} can also be deduced, instead of using Proposition~\ref{generalization}, by applying the Weak Factorization Theorem \cite[Theorem~0.3.1]{AKMWo02} together with the blow-up formulas in Subsection~\ref{blowformulas}.  
Indeed, by the Weak Factorization Theorem, the Kato modification 
\(\pi\) decomposes as a sequence of blow-ups and blow-downs along irreducible
smooth centers. Using this fact and Theorem~\ref{blowupD}, we can relate the 
Dolbeault cohomology of \(\widehat{\mathbb{B}}\) to that of 
\(\widehat{\mathbb{CP}^n}\).
In particular, for any \(p\) and any \(q \neq 0\), we have
\begin{equation}
     h^{p,q}(\widehat{\mathbb{B}})
    = h^{p,q}(\widehat{\mathbb{CP}^n}) - h^{p,q}(\mathbb{CP}^n).
\end{equation}
Indeed, \(\pi\) is a composition of blow-ups and blow-downs, so each  \(h^{p,q}(\widehat{\mathbb{B}})\) can be expressed as a sum involving 
only certain Hodge numbers of the centers along which these blow-ups 
and blow-downs are performed, together with the \((p,q)\)-Hodge number
of the ball \(\mathbb{B}\).
More precisely, if \(\{Z_i\}_{i \in I}\) are the centers of the blow-ups 
and \(\{W_j\}_{j \in J}\) those of the blow-downs, then, applying 
Theorem~\ref{blowupD} iteratively, we obtain
\begin{align*}
    h^{p,q}(\widehat{\mathbb{B}}) 
&= \sum_{i \in I} \sum_{k=1}^{\mathrm{codim}(Z_i)-1} h^{p-k,q-k}(Z_i) 
- \sum_{j \in J} \sum_{k=1}^{\mathrm{codim}(W_j)-1} h^{p-k,q-k}(W_j) 
+ h^{p,q}(\mathbb{B})\\
&= \sum_{i \in I} \sum_{k=1}^{\mathrm{codim}(Z_i)-1} h^{p-k,q-k}(Z_i) 
- \sum_{j \in J} \sum_{k=1}^{\mathrm{codim}(W_j)-1} h^{p-k,q-k}(W_j) 
+ h^{p,q}(\mathbb{CP}^n)- h^{p,q}(\mathbb{CP}^n)\\
&= h^{p,q}(\widehat{\mathbb{CP}^n}) - h^{p,q}(\mathbb{CP}^n),
\end{align*}
since the Dolbeault cohomology of the ball vanishes for \(q \neq 0\).\\
Applying the same argument but using Theorem~\ref{blowupd}, we obtain the analogous relation for Betti numbers. That is, for any \(k \neq 0\),
\begin{equation}
    \begin{split}
         b_{k}(\widehat{\mathbb{B}})
    = b_{k}(\widehat{\mathbb{CP}^n}) - b_{k}(\mathbb{CP}^n).
    \end{split}
\end{equation}
\end{rem}

\begin{figure}[!t]
    \centering
    \caption{ Hodge diamond of \(X = X(\pi, \sigma)\), where \(h^{p,q} := h^{p,q}(\widehat{\mathbb{B}})\).}
  \vspace{3mm} 
  
\begin{tikzpicture}[any node/.style={font=\scriptsize, inner sep=1pt}]

\def\dx{0.7}   
\def\dy{0.8}   

\node (p0q0) at (0,0) {$1$};

\node (p1q0) at (-1*\dx,-1*\dy) {0};
\node (p0q1) at (1*\dx,-1*\dy) {1};

\node (p2q0) at (-2*\dx,-2*\dy) {0};
\node (p1q1) at (0,-2*\dy) {$h^{1,1}$};
\node (p0q2) at (2*\dx,-2*\dy) {$h^{0,2}$};

\node (A) at (-2.8*\dx,-2.8*\dy) {.};
\node (B) at (-3*\dx,-3*\dy) {.};
\node (C) at (-3.2*\dx,-3.2*\dy) {.};
\node (D) at (2.8*\dx,-2.8*\dy) {.};
\node (E) at (3*\dx,-3*\dy) {.};
\node (F) at (3.2*\dx,-3.2*\dy) {.};
\node (G) at (-0.8*\dx,-2.8*\dy) {.};
\node (H) at (-1*\dx,-3*\dy) {.};
\node (I) at (-1.2*\dx,-3.2*\dy) {.};
\node (L) at (0.8*\dx,-2.8*\dy) {.};
\node (M) at (1*\dx,-3*\dy) {.};
\node (N) at (1.2*\dx,-3.2*\dy) {.};

\node (pn1q0) at (-4*\dx,-4*\dy) {0};
\node (pn2q1) at (-2*\dx,-4*\dy) {$h^{n-2,1}$};
\node (dotsRightgiu') at (0*\dx,-4.15*\dy) {$.\,\,.\,\,.$};

\node (p1qn2) at (2*\dx,-4*\dy) {$h^{1,n-2}$};
\node (p0qn1) at (4*\dx,-4*\dy) {$h^{0,n-1}$};

\node (pnq0) at (-5*\dx,-5*\dy) {0};
\node (pn1q1) at (-3*\dx,-5*\dy) {$h^{n-1,1}$};

\node (dotsn2) at (0,-5.15*\dy) {$.\,\,.\,\,.\,\,.\,\,.$};

\node (p1qn1) at (3*\dx,-5*\dy) {$h^{1,n-1}$};
\node (p0qn)  at (5*\dx,-5*\dy) {$h^{0,n}$};

\end{tikzpicture}

\end{figure}

We can also relate the Hodge and Betti numbers of $X$ to those of $\widehat{H}$.
\begin{thm}\label{samenumbers}
     Let $X = X(\pi,\sigma)$ be a Kato manifold with Kato modification
   $\pi : \widehat{\mathbb{B}} \to \mathbb{B}$. Let $\widehat{H}$ be the modification of a primary Hopf manifold $H$ obtained via small deformation from $X$. Then $\widehat{H}$ and $X$ have the same Hodge and Betti numbers.
\end{thm}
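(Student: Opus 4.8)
The plan is to reduce the statement entirely to Theorem~\ref{Hodge numbers21}, exploiting the fact that the small deformation relating $X$ to $\widehat{H}$ affects only the embedding $\sigma$ and leaves the proper modification $\pi : \widehat{\mathbb{B}} \to \mathbb{B}$ — hence the induced modification $\pi : \widehat{\mathbb{CP}^n} \to \mathbb{CP}^n$ — unchanged. First I would recall, following \cite{Kato77} (see also \cite{Dlo84}, \cite{istrati2019classkatomanifolds}), that $\widehat{H}$ is again a Kato manifold, and in fact one may write $\widehat{H} = X(\pi, \widehat{\sigma})$ with the \emph{same} Kato modification $\pi$ as for $X$: the deformation moves the embedding, and $\widehat{\sigma}$ can be chosen so that $\widehat{\sigma}(P)\cap E = \emptyset$, which is precisely the reason $\widehat{H}$ is a modification of a primary Hopf manifold. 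Equivalently, $X$ and $\widehat{H}$ fit into a deformation family over a disk along which the proper modification is constant.

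Next I would invoke Theorem~\ref{Hodge numbers21} for both $X$ and $\widehat{H}$. Although the standing assumption in Sections~\ref{picture}--\ref{finalsection} is $\sigma(P)\cap E \neq \emptyset$, all the arguments leading to Theorem~\ref{Hodge numbers21} — the Mayer--Vietoris computation of Section~\ref{picture}, the analysis of $\mathrm{Id}-\gamma^*$ in Section~\ref{isomorhism}, and Proposition~\ref{noWFT} — apply verbatim in the degenerate case as well, the germ $\widehat{\gamma} = \pi\widehat{\sigma}$ still being a holomorphic contraction at the origin. Thus the Hodge numbers of $\widehat{H}$ are governed by the same formula as those of $X$, with $h^{p,q}(\widehat{\mathbb{CP}^n})$ and $h^{p,q}(\mathbb{CP}^n)$ on the right-hand side; since $\pi$, and therefore $\widehat{\mathbb{CP}^n}$, is common to $X$ and $\widehat{H}$, and the exceptional corner values in Theorem~\ref{Hodge numbers21} are the same universal constants, we conclude $h^{p,q}(X) = h^{p,q}(\widehat{H})$ for all $p,q$.

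For the Betti numbers I would give the short argument: a small deformation is, by Ehresmann's theorem, a locally trivial $C^\infty$ fibration, so $X$ and $\widehat{H}$ are diffeomorphic and hence $b_k(X) = b_k(\widehat{H})$ for all $k$ — this is exactly the fact already used in the proof of Theorem~\ref{Hodge numbers21}. Alternatively, one may argue exactly as for the Hodge numbers, using the Betti part of Theorem~\ref{Hodge numbers21} together with Corollary~\ref{HodgeBetti}.

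The only genuine point requiring care — and the step I expect to be the main obstacle — is the first one: pinning down precisely what Kato's deformation preserves, namely that $\widehat{H}$ can be realized as $X(\pi,\widehat{\sigma})$ with the very same $\pi$ (equivalently, that the induced $\widehat{\mathbb{CP}^n}\to\mathbb{CP}^n$ is unchanged). Once this is in place, the equality of all cohomological invariants is formal, since Theorem~\ref{Hodge numbers21} expresses them purely in terms of data attached to $\pi$.
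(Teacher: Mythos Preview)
Your route differs from the paper's and contains a gap at exactly the point you flag. The paper does \emph{not} apply Theorem~\ref{Hodge numbers21} to $\widehat{H}$; instead, since $\widehat{H}$ is by construction a proper modification of a primary Hopf manifold $H$ at a point, it invokes Corollary~\ref{HodgeBetti} to get $h^{p,q}(\widehat{H}) = h^{p,q}(H) + h^{p,q}(\widehat{\mathbb{CP}^n}) - h^{p,q}(\mathbb{CP}^n)$, and then plugs in Mall's explicit values for $h^{p,q}(H)$ (one at the four corners $(0,0),(0,1),(n,n-1),(n,n)$, zero elsewhere). Comparing with Theorem~\ref{Hodge numbers21} for $X$ yields the equality. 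Your approach is attractive because it would avoid Mall's computation, trading it for a second appeal to Theorem~\ref{Hodge numbers21}; but that second appeal is not justified as written.

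The gap is the assertion that $\widehat{\gamma} = \pi\widehat{\sigma}$ is ``still a holomorphic contraction at the origin''. Precisely because you arrange $\widehat{\sigma}(0)\notin E$, one has $\widehat{\gamma}(0)=\pi(\widehat{\sigma}(0))\neq 0$, so $\widehat{\gamma}$ does not fix the origin. All of Section~\ref{isomorhism} --- Lemma~\ref{control}, Proposition~\ref{small contraction}, Proposition~\ref{p=0} --- and hence Theorem~\ref{betainfirstsection} and Theorem~\ref{Hodge numbers21}, are proved under the hypothesis $\gamma(0)=0$; this is exactly the paper's standing assumption $\sigma(0)\in E$, and the case $\sigma(P)\cap E=\emptyset$ is explicitly set aside in the Preliminaries and handled via Mall. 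So the arguments do not apply ``verbatim''. The obstruction is not fatal: $\widehat{\gamma}:\overline{\mathbb{B}}\to\mathbb{B}$ is a strict holomorphic self-map, hence (e.g.\ by Earle--Hamilton) has a unique attracting fixed point $p\in\mathbb{B}$, and one can rework Section~\ref{isomorhism} around $p$ rather than $0$. But that is additional work your proposal does not supply, and the paper's route via Corollary~\ref{HodgeBetti} plus \cite{mall1991cohomology} sidesteps it entirely. Your Betti-number argument via Ehresmann is correct and is exactly what the paper does.
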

\begin{proof}
    The Betti numbers agree by construction, since the manifolds are diffeomorphic.
As for the Hodge numbers, the equality follows from Corollary~\ref{p00q}, 
Corollary~\ref{HodgeBetti} and Theorem~\ref{Hodge numbers21}, recalling that the Hodge numbers  
of an \(n\)-dimensional primary Hopf manifold (see \cite{mall1991cohomology}) are given by:
\[
h^{p,q}_{\bar{\partial}} =
\begin{cases}
1, & \text{if } (p,q) \in \{(0,0),(0,1),(n,n-1),(n,n)\}\\[2mm]
0, & \text{otherwise.}
\end{cases}
\]
\end{proof}

We are now ready to prove Theorem~\ref{Hodge decomposition}.
First, we observe that Theorem~\ref{Hodge numbers21} implies

\begin{equation}
\begin{split}
    h^{0,1}(X) + h^{1,0}(X) &= 1 = b_1(X),\\
    h^{n,n-1}(X) + h^{n-1,n}(X) &= 1 = b_{2n-1}(X).
\end{split} 
\end{equation}
Since \(\widehat{\mathbb{CP}^n}\) is a proper modification of the projective space \(\mathbb{CP}^n\), it is Moishezon \cite[Theorem 2.2.14]{Moishezon}. 
The Hodge decomposition holds on Moishezon manifolds \cite[Theorem 2.2.18]{Moishezon}.
By Theorem~\ref{Hodge numbers21}, we can conclude the proof.
Indeed, if \(2\le k\le 2n-2 \), we obtain

\begin{equation}
\begin{split}
   \sum_{p+q=k} h^{p,q}(X)&\overset{\ref{Hodge numbers21}}{=}\sum_{p+q=k}h^{p,q}(\widehat{\mathbb{CP}^n})-h^{p,q}(\mathbb{CP}^n) \\
  &=b_k(\widehat{\mathbb{CP}^n})-b_k(\mathbb{CP}^n)\overset{\ref{Hodge numbers21}}{=}b_k(X).
\end{split} 
\end{equation}

\begin{rem}\label{lckremark}
   As a consequence of Corollary~\ref{HodgeBetti} and of the previous discussion, 
we observe that if \(X\) is a compact (lcK) manifold and \(\widehat{X}\) is a proper holomorphic modification 
of \(X\) at a single point, then \(\widehat{X}\) satisfies the Hodge decomposition if and only if 
\(X\) does.

\end{rem}

\section{Examples}\label{SectionExamples}
In this section, we explicitly compute the Hodge and Betti numbers for two
classes of Kato manifolds. The first class concerns Kato manifolds obtained
through successive blow-ups at points: the modification \(\pi\) is a finite
sequence of point blow-ups, starting with the blow-up at the origin in the ball
\(\mathbb{B}\), and at each step, blowing up a point on the exceptional divisor
created in the previous step. This family of Kato manifolds admits a locally
conformally Kähler metric
(see \cite{Bru11, istrati2019classkatomanifolds}).
The second class is that of toric Kato manifolds, arising from toric modifications and studied in \cite{IOPR22}.

\begin{ex}[Blow-ups at points]
   Let $X = X(\pi,\sigma)$ be a Kato manifold such that the Kato modification
$\pi$ is a composition of $r$ blow-ups at points. Iterating Theorem~\ref{blowupd}, the Betti numbers of \(X\) are given by:
\begin{align*}
    b_k(X)=b_k(\widehat{H}) &= \begin{cases}
        1, & k \in \{0,1,2n-1,2n\}\\
        r, & k=2j,1\le j\le n-1\\
        0, & \text{otherwise},
    \end{cases}
\end{align*}
where \(\widehat{H}\) is the modification of a primary Hopf manifold $H$ obtained via small deformation from \(X\).
Applying Theorem~\ref{Hodge numbers} and Theorem~\ref{blowupD} iteratively, we find that the Hodge numbers of \(X\) are given by:
\begin{align*}
    h^{p,q}(X) &= \begin{cases}
        1, & (p,q) \in \{(0,0),(0,1),(n,n-1),(n,n)\}\\
        r, & 1\le p = q\le n-1 \\
        0, & \text{otherwise}.
    \end{cases}
\end{align*}
In particular, we can directly see that the Hodge decomposition holds for \(X\) without relying on the arguments in Section~\ref{finalsection}.

\begin{rem}
    In this case, \(h^{1,2}(X) = 0\); and hence, by \cite[Theorem 1.6]{IOPR22}, Kato manifolds arising from blow-ups at points cannot admit pluriclosed metrics in dimensions greater than two.  
Given that they admit lcK metrics, this provides additional evidence for the folklore conjecture asserting that compact lcK manifolds admit pluriclosed metrics only in dimension two.\\
We note that Kato manifolds with non-vanishing $h^{1,2}$ do exist.
Indeed, in dimension $n \ge 4$, one may consider the Kato manifold
$X = X(\pi, \sigma)$, where $\pi$ is a modification obtained as the composition
of two blow-ups: first, the blow-up of the unit ball in $\mathbb{C}^n$ at the origin,
with exceptional divisor $E = \mathbb{CP}^{n-1}$; and second, the blow-up
along an elliptic curve $Z \subset E$.
By Theorem~\ref{Hodge numbers21} together with the blow-up formula,
Theorem~\ref{blowupD}, we obtain
\[
h^{1,2}(X) = h^{0,1}(Z) = 1.
\]
Furthermore, $X$ is lcK, since the modification $\pi$ is K\"ahler
\cite[Theorem~10.3]{IOPR22}.

\end{rem}

\end{ex}

\begin{ex}[Toric Kato manifolds]
    In \cite{IOPR22}, the authors study a class of toric Kato manifolds. We briefly recall the construction. Let \(N := \mathbb{Z}^n\) and denote by \(T_N\) the complex torus
\[
T_N := N \otimes_{\mathbb{Z}} \mathbb{C}^* = (\mathbb{C}^*)^n .
\]
A proper modification \(\pi : \widehat{\mathbb{C}^n} \to \mathbb{C}^n\) at \(0 \in \mathbb{C}^n\) 
is called a \emph{toric modification} if \(\widehat{\mathbb{C}^n}\) is a \(T_N\)-toric variety and 
\(\pi\) is \(T_N\)-equivariant.
A Kato data \((\pi,\sigma)\) is called a \emph{toric Kato data} if the extension 
\(\pi : \widehat{\mathbb{C}^n} \to \mathbb{C}^n\) is a smooth toric modification at the origin and there exists 
\(\nu \in \mathrm{Aut}_{\mathrm{gr}}(T_N)\) such that \(\sigma(\underline{\lambda} z)=\nu(\underline{\lambda})\,\sigma(z)\) for any \(z\in \mathbb{B}\) and \( \underline{\lambda}\in T_N\) for which this is defined.
In this case, we say that \(\sigma\) is \(\nu\)-equivariant.  
A Kato manifold is called a \emph{toric Kato manifold} if it admits a toric Kato data.

The Betti numbers of toric Kato manifolds are known.  
If \(X\) is an \(n\)-dimensional toric Kato manifold, then its Betti numbers 
\cite[Theorem~6.1]{IOPR22} are given by
\[
b_k(X)=
\begin{cases}
   1, \quad &k\in\{0,1,2n-1,2n\}\\
   -1+\sum_{s=j}^{n}(-1)^{\,s-j}\binom{s}{j}\left(a_{n-s}+\binom{n}{s+1}\right),& k=2j, 1\le j\le n-1 \\
   0,& \text{otherwise},
\end{cases}
\]
where \(a_k\) denotes the number of \(k\)-dimensional cones in the fan defining \(\widehat{\mathbb{C}^n}\).
In particular, the Betti numbers of $\widehat{\mathbb{CP}^n}$ are given by: 
\[b_k(\widehat{\mathbb{CP}^n})=
\begin{cases}
    \sum_{s=j}^{n}(-1)^{\,s-j}\binom{s}{j}\left(a_{n-s}+\binom{n}{s+1}\right), \quad &k=2j, 0\le j\le n\\
    0,&\text{otherwise}.
\end{cases}
\]
The Hodge numbers of $\widehat{\mathbb{CP}^n}$ satisfy
$h^{p,q}(\widehat{\mathbb{CP}^n}) = 0$ whenever $p$ is different from $q$
\cite[Corollary~12.7]{Danilov}.  
Without appealing to the Moishezon property, the Hodge decomposition for 
$\widehat{\mathbb{CP}^n}$ follows from the fact that it holds for any complete smooth toric variety 
\cite[Theorem~12.5]{Danilov}.  
Consequently, for any $p$ we have
\[
h^{p,p}(\widehat{\mathbb{CP}^n}) = b_{2p}(\widehat{\mathbb{CP}^n}).
\]
Using Theorem~\ref{Hodge numbers21}, we can compute the Hodge numbers of
toric Kato manifolds directly from the fan defining
$\widehat{\mathbb{C}^n}$.
\begin{prop}
    Let $X = X(\pi,\sigma)$ be a toric Kato manifold, and let $\pi:\widehat{\mathbb{C}^n}\to\mathbb{C}^n$ be the associated toric modification at the origin.  
    Then the Hodge numbers of $X$ are given by:
    \[
    h^{p,q}(X)=
    \begin{cases}
         1, & (p,q)\in\{(0,0),(0,1),(n,n-1),(n,n)\}\\
         -1+\displaystyle\sum_{s=p}^{n}(-1)^{\,s-p}\binom{s}{p}\!
         \left(a_{n-s}+\binom{n}{\,s+1\,}\right),
         & 1\le p=q\le n-1\\
         0, &\text{otherwise},
    \end{cases}
    \]
    where $a_k$ denotes the number of $k$-dimensional cones in the fan
    defining $\widehat{\mathbb{C}^n}$.
\end{prop}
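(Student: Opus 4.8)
The plan is to deduce everything from Theorem~\ref{Hodge numbers21}, which already expresses the Hodge numbers of a Kato manifold $X$ entirely in terms of those of $\widehat{\mathbb{CP}^n}$ and $\mathbb{CP}^n$, and then to substitute the known description of the cohomology of the complete smooth toric variety $\widehat{\mathbb{CP}^n}$ in terms of the fan of $\widehat{\mathbb{C}^n}$. So the whole proof is a case analysis over bidegrees $(p,q)$, carried out in parallel with the two piecewise formulas.

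First I would dispose of the special bidegrees: for $(p,q)\in\{(0,0),(0,1),(n,n-1),(n,n)\}$, Theorem~\ref{Hodge numbers21} gives $h^{p,q}(X)=1$ directly, which is the first line of the claim. Next, for off-diagonal bidegrees $p\neq q$, Theorem~\ref{Hodge numbers21} gives $h^{p,q}(X)=h^{p,q}(\widehat{\mathbb{CP}^n})-h^{p,q}(\mathbb{CP}^n)$; since $\widehat{\mathbb{CP}^n}$ is a complete smooth toric variety, \cite[Corollary~12.7]{Danilov} yields $h^{p,q}(\widehat{\mathbb{CP}^n})=0$, and $h^{p,q}(\mathbb{CP}^n)=0$ too, so $h^{p,q}(X)=0$, matching the ``otherwise'' line.

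It remains to treat the diagonal bidegrees $1\le p=q\le n-1$ (the cases $p=q=0$ and $p=q=n$ being already handled above). Here Theorem~\ref{Hodge numbers21} gives $h^{p,p}(X)=h^{p,p}(\widehat{\mathbb{CP}^n})-h^{p,p}(\mathbb{CP}^n)=h^{p,p}(\widehat{\mathbb{CP}^n})-1$. Using that the Hodge decomposition holds on $\widehat{\mathbb{CP}^n}$ \cite[Theorem~12.5]{Danilov} together with the vanishing of its off-diagonal Hodge numbers, one has $h^{p,p}(\widehat{\mathbb{CP}^n})=b_{2p}(\widehat{\mathbb{CP}^n})$, and substituting the cone-count formula for $b_{2p}(\widehat{\mathbb{CP}^n})$ recalled above yields
\[
h^{p,p}(X)=-1+\sum_{s=p}^{n}(-1)^{\,s-p}\binom{s}{p}\left(a_{n-s}+\binom{n}{s+1}\right),
\]
which is the second line. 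This exhausts all bidegrees.

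I do not expect a genuine obstacle: every ingredient—Theorem~\ref{Hodge numbers21}, the Hodge decomposition and off-diagonal vanishing on complete smooth toric varieties, and the explicit formula for $b_{2p}(\widehat{\mathbb{CP}^n})$—is already in hand. The only point requiring care is the bookkeeping needed to check that the piecewise ranges in Theorem~\ref{Hodge numbers21} and in the stated formula match up along the diagonal, which the explicit split above makes transparent.
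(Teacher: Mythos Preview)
Your proposal is correct and follows essentially the same approach as the paper: the discussion preceding the proposition in the paper invokes exactly the same three ingredients you use---Theorem~\ref{Hodge numbers21}, the off-diagonal vanishing and Hodge decomposition for complete smooth toric varieties from \cite{Danilov}, and the explicit Betti number formula for $\widehat{\mathbb{CP}^n}$---and combines them in the same way.
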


\end{ex}

\section{Bott--Chern cohomology}
In this final section, we apply the same arguments used to compute the Dolbeault
cohomology of Kato manifolds to relate certain Bott--Chern numbers
of a Kato manifold $X$ to those of $\widehat{H}$. We also show that, in these
cases, Bott--Chern cohomology coincides with Dolbeault cohomology.
Before proceeding, we briefly recall some definitions and properties of the
cohomologies arising in the non-K\"ahler setting, such as Bott--Chern and
Aeppli cohomology. They build, on general complex manifolds, a bridge between Dolbeault and de~Rham cohomology and are naturally associated with the complex structure
of the manifold. Since the Dolbeault picture for Kato manifolds is now
clear, it is natural to investigate them as well.

Let $X$ be a complex manifold.
The Bott--Chern cohomology of $X$ is the bi-graded algebra
$$ H^{\bullet,\bullet}_{BC}(X) \;:=\; \frac{\ker\partial\cap\ker\overline{\partial}}{\text{Im }\partial\overline{\partial}}\;, $$
and the Aeppli cohomology of $X$ is the bi-graded $H^{\bullet,\bullet}_{BC}(X)$-module
$$ H^{\bullet,\bullet}_{A}(X) \;:=\; \frac{\ker\partial\overline{\partial}}{\text{Im }\partial+\text{Im }\overline{\partial}} \;.$$
The identity map induces the following natural morphisms of (bi-)graded $\mathbb{C}$-vector spaces:
$$
\xymatrix{
 & H^{\bullet,\bullet}_{BC}(X) \ar[d]\ar[ld]\ar[rd] & \\
 H^{\bullet,\bullet}_{\partial}(X) \ar[rd] & H^{\bullet}_{dR}(X;\mathbb{C}) \ar[d] & H^{\bullet,\bullet}_{\overline{\partial}}(X) \ar[ld] \\
 & H^{\bullet,\bullet}_{A}(X) &
}
$$
In general, the maps above are neither injective nor surjective.
They are all isomorphisms if and only if the manifold $X$
satisfies the $\partial\overline{\partial}$-lemma.
In particular, in the K\"ahler case all these cohomology groups coincide,
whereas in the non-K\"ahler setting they provide additional information
on the complex structure.
For a detailed treatment of these cohomologies, see~\cite{Angellabook}.

To any complex manifold $X$ and any bidegree $(p,q)$ one can associate the Schweitzer complex
$(\mathcal{L}^{\bullet}_{p,q},d_{\mathcal{L}})$ (see~\cite{schweitzer2007autourlacohomologiebottchern,StelzigJonas2025SROT}). It is a simple complex of locally free sheaves defined as follows:
\begin{align*}
\mathcal{L}_{p,q}^k&:=\bigoplus_{\substack{r+s=k\\r<p,s<q}} \mathcal{A}^{r,s}_X&\text{if }k\leq p+q-2,\\
\mathcal{L}_{p,q}^k&:=\bigoplus_{\substack{r+s=k+1\\r\geq p,s\geq q}} \mathcal{A}^{r,s}_X&\text{if }k\geq p+q-1,
\end{align*}
where $\mathcal{A}_X^{r,s}$ is the sheaf of smooth complex valued $(r,s)$-forms of $X$.
The differential $d_{\mathcal{L}}$ is given by
\[
\cdots\overset{\text{pr}\,\circ d}{\longrightarrow}\mathcal{L}_{p,q}^{p+q-3}\overset{\text{pr}\,\circ d}{\longrightarrow}\mathcal{L}_{p,q}^{p+q-2}\overset{\partial\overline{\partial}}{\longrightarrow}\mathcal{L}_{p,q}^{p+q-1}\overset{d}{\longrightarrow}\mathcal{L}_{p,q}^{p+q}\overset{d}{\longrightarrow}\cdots\: ,
\]
where $\text{pr}$ denotes the projection from the sheaf of all forms in a given degree
to the direct summand $\mathcal{L}_{p,q}^k$.
By construction, denoting by
$\mathbb{H}^{k}\big(\mathcal{L}_{p,q}^\bullet(X)\big)$
the $k$-th cohomology group associated with the Schweitzer complex $\mathcal{L}_{p,q}^\bullet$,
we have
\begin{equation}\label{S1}
    H^{p,q}_{BC}(X)\cong\mathbb{H}^{p+q-1}\big(\mathcal{L}_{p,q}^\bullet(X)\big),
\end{equation}
and
\begin{equation}\label{S2}
    H^{p-1,q-1}_{A}(X)\cong\mathbb{H}^{p+q-2}\big(\mathcal{L}_{p,q}^\bullet(X)\big).
\end{equation}

Given an open cover $\{U,V\}$ of $X$, the Mayer--Vietoris sequence associated
with the complex $\mathcal{L}^{\bullet}_{p,q}$ yields the exact sequence
\begin{align*}
    \mathbb{H}^{p+q-2}\!\big(\mathcal{L}_{p,q}(U \cap V)\big)
\to
\mathbb{H}^{p+q-1}\!\big(&\mathcal{L}_{p,q}(X)\big)
\to
\mathbb{H}^{p+q-1}\!\big(\mathcal{L}_{p,q}(U)\big)
\oplus
\mathbb{H}^{p+q-1}\!\big(\mathcal{L}_{p,q}(V)\big)\\
&\to
\mathbb{H}^{p+q-1}\!\big(\mathcal{L}_{p,q}(U \cap V)\big).
\end{align*}
Via the identifications (\ref{S1}) and (\ref{S2}), for any $p,q\ge 1$ this sequence becomes
\begin{equation}\label{MVBottAeppli}
H^{p-1,q-1}_A(U \cap V)
\to
H^{p,q}_{BC}(X)
\to
H^{p,q}_{BC}(U) \oplus H^{p,q}_{BC}(V)
\to
H^{p,q}_{BC}(U \cap V).
\end{equation}

We are now in a position to state and prove a relation between some Bott--Chern
numbers of a Kato manifold $X$ and those of $\widehat{H}$.

\begin{prop}\label{BottChern}
Let $X$ be a Kato manifold, and let $\widehat{H}$ be the modification of a primary Hopf manifold $H$ obtained via small deformation from $X$.
Then $X$ and $\widehat{H}$ have the same Bott--Chern numbers $h^{p,q}_{BC}$ in the following cases:
\begin{enumerate}
\item $p,q \ge 2$ and $p+q \le n-1$;
\item $p = n$ or $q = n$;
\item $p = 0$ or $q = 0$.
\end{enumerate}
\end{prop}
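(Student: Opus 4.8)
The strategy mirrors the one already used for Dolbeault cohomology in Section~\ref{picture}: cover the Kato manifold $X$ by the open sets $\{U,V\}$ described there, where $U$ is an annular neighbourhood of the gluing region and $V\cong\widehat{\mathbb B}\setminus\overline{\sigma(\mathbb B)}$, and feed this cover into the Mayer--Vietoris sequence~\eqref{MVBottAeppli} for the Schweitzer complex. Since $U$ is Stein (biholomorphic to an annulus, which we may take inside a ball), $H^{p,q}_{BC}(U)$ and $H^{p-1,q-1}_A(U)$ can be controlled, and the same is true for $U\cap V=U_1\sqcup U_2$, a disjoint union of annuli. The first step is therefore to prove the analogue of Lemma~\ref{annulus} for Bott--Chern and Aeppli cohomology: for an annulus $\mathcal A\subset\mathbb C^n$ with $n\ge 3$, one has $H^{p,q}_{BC}(\mathcal A)=0$ and $H^{p,q}_A(\mathcal A)=0$ in a suitable range of bidegrees. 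This should follow by combining the vanishing $H^q(\mathcal A,\Omega^p)=0$ for $1\le q\le n-2$ from Lemma~\ref{annulus}, the vanishing of the de~Rham cohomology of $\mathcal A\simeq\mathbb S^{2n-1}$ in degrees $1\le k\le 2n-2$, and the description of $H^{p,q}_{BC}$ and $H^{p,q}_A$ via the Schweitzer complex $\mathcal L^\bullet_{p,q}$ whose cohomology sheaves' hypercohomology is computed by a spectral sequence whose $E_1$-page involves Dolbeault groups of $\mathcal A$. Concretely, once the relevant Dolbeault and de~Rham groups of the annulus vanish, the hypercohomology of $\mathcal L^\bullet_{p,q}(\mathcal A)$ vanishes in the degrees $p+q-2$ and $p+q-1$ that control $H^{p-1,q-1}_A$ and $H^{p,q}_{BC}$, provided $(p,q)$ lies in the stated ranges.

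With the annulus vanishing in hand, the Mayer--Vietoris sequence~\eqref{MVBottAeppli} collapses. In case~(1), where $p,q\ge 2$ and $p+q\le n-1$, both $H^{p-1,q-1}_A(U\cap V)$ and $H^{p,q}_{BC}(U\cap V)$ vanish, and $H^{p,q}_{BC}(U)=0$ since $U$ is Stein; hence $H^{p,q}_{BC}(X)\cong H^{p,q}_{BC}(V)$. One then runs the same Mayer--Vietoris argument that produced Lemma~\ref{V}, now applied to the cover $\{U',V\}$ of $\widehat{\mathbb B}$ with $U'$ a small ball containing $\overline{\sigma(\mathbb B)}$, to obtain $H^{p,q}_{BC}(V)\cong H^{p,q}_{BC}(\widehat{\mathbb B})$; and a further application to the cover of $\widehat{\mathbb{CP}^n}$ used in Proposition~\ref{noWFT}, together with the bimeromorphic invariance statements, identifies this with a combination of Bott--Chern numbers of $\widehat{\mathbb{CP}^n}$ and $\mathbb{CP}^n$. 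Since $\widehat{\mathbb{CP}^n}$ is Moishezon it satisfies the $\partial\overline\partial$-lemma, so all its cohomologies agree and the Bott--Chern count matches the Dolbeault one; comparing with Theorem~\ref{Hodge numbers21} and the fact that $\widehat H$ and $X$ have the same Hodge numbers then gives $h^{p,q}_{BC}(X)=h^{p,q}_{BC}(\widehat H)$ in case~(1), and along the way shows these coincide with the Dolbeault numbers. For cases~(2) and~(3), where $p=n$, $q=n$, $p=0$, or $q=0$, one uses Serre-type duality for Bott--Chern/Aeppli cohomology ($H^{p,q}_{BC}(X)\cong H^{n-p,n-q}_A(X)^\vee$) to reduce the extreme degrees to edge cases, where the complex $\mathcal L^\bullet_{p,q}$ degenerates: for $p=0$ or $q=0$ the Schweitzer complex in the relevant degree reduces essentially to computing spaces of holomorphic forms or global functions, so the Mayer--Vietoris analysis becomes the one already carried out for $\beta$ and $\tilde\beta$ in Section~\ref{isomorhism}, giving $H^{p,0}_{BC}(X)\cong H^{p,0}_{\overline\partial}(X)$ (which in turn equals the corresponding number for $\widehat H$ by Corollary~\ref{p00q}).

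The main obstacle is the first step: establishing the vanishing of $H^{p,q}_{BC}(\mathcal A)$ and $H^{p,q}_A(\mathcal A)$ for an annulus in the right range of bidegrees. Unlike the Dolbeault case, where Lemma~\ref{annulus} rests on the Andreotti--Grauert finiteness theorem together with Laufer's dichotomy, the Bott--Chern and Aeppli groups are not the cohomology of a coherent sheaf, so one must argue through the Schweitzer complex and the hypercohomology spectral sequence, carefully tracking which Dolbeault and de~Rham groups of $\mathcal A$ enter in each total degree and checking that they all vanish when $(p,q)$ satisfies the constraints in the three cases. A secondary technical point is verifying that the edge-degree behaviour of $\mathcal L^\bullet_{p,q}$ (when $p=0$ or $q=0$, or $p=n$ or $q=n$) genuinely reduces the relevant hypercohomology group to the holomorphic-form computation already performed, so that Theorem~\ref{beta}, Proposition~\ref{p=0}, and Corollary~\ref{p00q} can be reused verbatim. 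Once these vanishing and reduction statements are secured, the rest is a bookkeeping exercise identical in structure to the Dolbeault argument of Sections~\ref{picture}--\ref{finalsection}.
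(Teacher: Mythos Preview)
Your overall architecture for case~(1) is the same as the paper's: run the Mayer--Vietoris sequence~\eqref{MVBottAeppli} on the cover $\{U,V\}$ from Section~\ref{picture}, use vanishing of Bott--Chern and Aeppli cohomology on the annulus to collapse the sequence, identify $H^{p,q}_{BC}(X)\cong H^{p,q}_{BC}(\widehat{\mathbb B})$, and then compare with $\widehat H$ via $\widehat{\mathbb{CP}^n}$. The differences are in the ingredients, and they make the paper's argument considerably shorter.

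First, the ``main obstacle'' you identify---vanishing of $H^{p,q}_{BC}(\mathcal A)$ and $H^{p,q}_A(\mathcal A)$ for an annulus---is handled in the paper not via a hypercohomology spectral sequence for $\mathcal L^\bullet_{p,q}$, but by quoting two results of Angella--Calamai (Theorems~\ref{Angella1} and~\ref{Angella2}): if $H^{p,q}_{\overline\partial}$ and $H^{q,p}_{\overline\partial}$ vanish then $H^{p,q}_A$ vanishes, and a similar Dolbeault hypothesis forces $H^{p,q}_{BC}\hookrightarrow H^{p+q}_{dR}$. Together with Lemma~\ref{annulus} and the de~Rham vanishing for $\mathbb S^{2n-1}$ this gives the annulus lemma in one line (Lemma~\ref{annulusBC}). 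Your spectral sequence route would work, but it is the hard way.

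Second, for cases~(2) and~(3) the paper does not go through Schweitzer complexes, edge degeneration, or Bott--Chern/Aeppli duality at all; it uses only the Dolbeault numbers of $X$ and $\widehat H$, which are already known from Theorems~\ref{Hodge numbers} and~\ref{samenumbers}. For $q=0$ one simply observes that $H^{k,0}_{BC}$ consists of $d$-closed holomorphic $k$-forms, hence is zero once $h^{k,0}_{\overline\partial}=0$. For $p=n$ the paper argues directly: any $\overline\partial$-closed $(n,q)$-form is $\overline\partial$-exact (since $h^{n,q}_{\overline\partial}(X)=h^{0,n-q}_{\overline\partial}(X)=0$ for $q\neq n,n-1$), say $\alpha=\overline\partial\gamma$ with $\gamma$ automatically $\partial$-closed of type $(n,q-1)$; then $h^{q-1,n}_{\overline\partial}(X)=0$ forces $\gamma$ to be $\partial$-exact, so $\alpha$ is $\partial\overline\partial$-exact. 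This elementary chase replaces your duality reduction, which as written is incomplete: you reduce $H^{n,q}_{BC}$ to $H^{0,n-q}_A$ but then only discuss $H^{p,0}_{BC}$, never the Aeppli side you would actually need.
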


We begin by proving an analogue of Lemma~\ref{annulus} for the Aeppli and
Bott--Chern cohomology of an annulus.
Before doing so, we recall two results from~\cite{AngellaDaniele2013Bcaq} concerning Aeppli
and Bott--Chern cohomology.
We then state Proposition~\ref{generalization} and Corollary~\ref{HodgeBetti} for Bott--Chern cohomology, 
whose proofs are the same as in the Dolbeault cohomology case, but using Lemma~\ref{annulusBC} and the exact sequence in (\ref{MVBottAeppli}).

\begin{thm}[{\cite[Theorem~1.2]{AngellaDaniele2013Bcaq}}]\label{Angella1}
    Let $X$ be a complex manifold. Fix $(p,q)\in\big(\mathbb{N}\setminus\{0\}\big)^2$.
If
\[
H^{p,q}_{\overline{\partial}}(X)=\{0\}
\quad\text{and}\quad
H^{q,p}_{\overline{\partial}}(X)=\{0\},
\]
then
\[
H^{p,q}_{A}(X)=\{0\}.
\]
\end{thm}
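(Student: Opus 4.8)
The plan is to reduce the statement to a single non‑formal ``splitting'' assertion about $\partial\bar\partial$-closed forms and then obtain that assertion from the standard exact sequences interrelating the $\partial$-, $\bar\partial$-, Bott--Chern and Aeppli cohomologies (equivalently, from the structure of the Dolbeault double complex).

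I would first dispose of the formal part. Complex conjugation exchanges $\partial$ with $\bar\partial$ and the bidegree $(p,q)$ with $(q,p)$, so the hypothesis $H^{q,p}_{\bar\partial}(X)=0$ is equivalent to $H^{p,q}_{\partial}(X)=0$; we may thus assume $H^{p,q}_{\bar\partial}(X)=0=H^{p,q}_{\partial}(X)$. Let $\alpha\in A^{p,q}$ with $\partial\bar\partial\alpha=0$; we must write $\alpha=\partial\beta+\bar\partial\gamma$. If $\bar\partial\alpha=0$ this is immediate from $H^{p,q}_{\bar\partial}(X)=0$, and if $\partial\alpha=0$ it is immediate from $H^{p,q}_{\partial}(X)=0$. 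So it is enough to prove: \emph{every $\partial\bar\partial$-closed $(p,q)$-form can be written as $\alpha_1+\alpha_2$ with $\bar\partial\alpha_1=0$ and $\partial\alpha_2=0$} --- for then one applies the two vanishing hypotheses to $\alpha_1$ and to $\alpha_2$ separately. Note also that, granted $H^{p,q}_{\bar\partial}(X)=0$, such a splitting exists for $\alpha$ if and only if $\partial\alpha\in\operatorname{Im}\partial\bar\partial$: one direction is formal, and for the other a $\bar\partial$-closed $\alpha_1$ is $\bar\partial$-exact, whence $\partial\alpha=\partial\alpha_1\in\operatorname{Im}\partial\bar\partial$.

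Thus the crux is the vanishing of the group $\dfrac{\operatorname{Im}\partial\cap\ker\bar\partial}{\operatorname{Im}\partial\bar\partial}$ in bidegree $(p+1,q)$, which is exactly the cohomology $H^{q}\bigl(\partial A^{p,\bullet},\bar\partial\bigr)$ of the image complex and which sits as a subgroup of $H^{p+1,q}_{BC}(X)$. To reach it I would use the short exact sequence of complexes $0\to\ker\partial\cap A^{p,\bullet}\to A^{p,\bullet}\to\partial A^{p,\bullet}\to 0$, whose first map is $\bar\partial$-linear because $\partial\bar\partial=-\bar\partial\partial$. Since $H^{q}(A^{p,\bullet},\bar\partial)=H^{p,q}_{\bar\partial}(X)=0$, the associated long exact sequence embeds $H^{q}(\partial A^{p,\bullet},\bar\partial)$ into $H^{q+1}(\ker\partial\cap A^{p,\bullet},\bar\partial)$, the latter being a quotient of $H^{p,q+1}_{BC}(X)$; running the symmetric row-wise sequence and feeding in $H^{p,q}_{\partial}(X)=0$ then forces this image to vanish. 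This is precisely the kind of bookkeeping carried out by Varouchas and by Schweitzer, and it is where the two hypotheses genuinely enter.

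The main obstacle is exactly this last step --- showing that a $\partial\bar\partial$-closed $(p,q)$-form really does split into a $\bar\partial$-closed and a $\partial$-closed summand once both Dolbeault groups vanish. It is the only non-formal ingredient and cannot be produced by diagram chasing in the abstract; one must exploit the extra structure of the Dolbeault double complex of a complex manifold --- for instance its decomposition into acyclic squares and indecomposable zigzags, which records precisely which cohomology classes are detected in which bidegrees --- and it is here that imposing the vanishing simultaneously at $(p,q)$ \emph{and} at the conjugate bidegree $(q,p)$ is indispensable.
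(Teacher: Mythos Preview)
The paper does not supply its own proof of this statement: it is quoted verbatim from \cite{AngellaDaniele2013Bcaq} and used as a black box in Lemma~\ref{annulusBC}. So there is nothing in the paper to compare against; the only question is whether your sketch actually establishes the result.

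It does not, and the gap is precisely where you yourself locate it. Your reduction is correct: under $H^{p,q}_{\bar\partial}=0$ the statement is equivalent to the splitting assertion, which in turn is equivalent to the vanishing of $\dfrac{\operatorname{Im}\partial\cap\ker\bar\partial}{\operatorname{Im}\partial\bar\partial}$ in bidegree $(p+1,q)$. But the sentence ``running the symmetric row-wise sequence and feeding in $H^{p,q}_{\partial}(X)=0$ then forces this image to vanish'' is an assertion, not an argument---and if you trace what the connecting map of your short exact sequence actually does, you find that asking its image to vanish is \emph{exactly} the splitting assertion again, so the exact sequence only reformulates the problem rather than solving it. Your final paragraph then retracts the claim and appeals to the zigzag decomposition, but that cannot work either: consider the three-term double complex generated by $a$ in bidegree $(p,q)$ with $\bar\partial a\neq 0$, $\partial a\neq 0$ and $\partial\bar\partial a=0$, together with its complex conjugate (placed at $(q,p)$ for the real structure, with $|p-q|\geq 2$). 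This bounded real double complex has $H^{p,q}_{\bar\partial}=H^{q,p}_{\bar\partial}=0$ yet $H^{p,q}_{A}\neq 0$. Since the zigzag decomposition is a purely algebraic statement about bounded double complexes, it is available for this example too and therefore cannot, on its own, yield the conclusion.

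In other words, neither of your two suggested mechanisms---the Varouchas-style short exact sequences, or the Stelzig decomposition---can close the gap without further input; any complete proof must bring in something specific to the Dolbeault complex of a genuine complex manifold (local $\partial\bar\partial$-exactness, or the hypercohomology description via resolutions by fine sheaves, as in \cite{schweitzer2007autourlacohomologiebottchern}). Your write-up identifies the obstacle clearly but does not overcome it.
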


\begin{thm}[{\cite[Theorem~1.1]{AngellaDaniele2013Bcaq}}]\label{Angella2}
    Let $X$ be a complex manifold. Fix $(p,q)\in\big(\mathbb{N}\setminus\{0\}\big)^2$.

\medskip
\noindent
 If
\[
\sum_{\substack{r+s=p+q-1 \\ s \ge \min\{p,q\}}}
\dim_{\mathbb{C}} H^{r,s}_{\overline{\partial}}(X) = 0,
\]
then there is a natural injective map
\[
H^{p,q}_{BC}(X) \longrightarrow H^{p+q}_{dR}(X;\mathbb{C}).
\]

\end{thm}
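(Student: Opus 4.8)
The natural map is the one induced by the identity: it sends a Bott--Chern class $[\alpha]_{BC}$, represented by a $\partial$- and $\overline{\partial}$-closed form $\alpha$ of bidegree $(p,q)$, to its de~Rham class $[\alpha]_{dR}$. This is well defined, since such an $\alpha$ is automatically $d$-closed, and any $\alpha=\partial\overline{\partial}\beta$ equals $d(\overline{\partial}\beta)$ and is thus $d$-exact. The plan is therefore to prove \emph{injectivity}, that is: if a $\partial$- and $\overline{\partial}$-closed $(p,q)$-form $\alpha$ is $d$-exact, say $\alpha=d\gamma$, then already $\alpha\in\mathrm{Im}\,\partial\overline{\partial}$. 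I would establish this ``weak $\partial\overline{\partial}$-lemma in bidegree $(p,q)$'' by a bidegree-by-bidegree diagram chase in the Dolbeault double complex, using the vanishing hypothesis to feed the chase.

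First I would decompose $\gamma=\sum_k\gamma_k$ into its pure components, with $\gamma_k$ of bidegree $(p+q-1-k,k)$, so that the bidegree-$(p+q-k,k)$ part of $d\gamma=\alpha$ becomes $\partial\gamma_k+\overline{\partial}\gamma_{k-1}$, which equals $\alpha$ when $k=q$ and $0$ otherwise. The goal is to replace $\gamma$ by $\gamma-d\rho$ for suitable forms $\rho$---an operation that leaves $\alpha=d\gamma$ unchanged---until only the component $\gamma_q$, of bidegree $(p-1,q)$, survives. I would do this by two staircases meeting at $\gamma_q$.

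From the bottom: the bidegree-$(p+q,0)$ part of $d\gamma=\alpha$ reads $\partial\gamma_0=0$, so $\gamma_0$ is $\partial$-closed, and since $H^{p+q-1,0}_{\partial}(X)\cong\overline{H^{0,p+q-1}_{\overline{\partial}}(X)}$ vanishes by hypothesis, I can write $\gamma_0=\partial\nu$ and subtract $d\nu$, which removes $\gamma_0$ and only modifies $\gamma_1$. Iterating upward strips $\gamma_0,\dots,\gamma_{q-1}$, using at the $k$-th step the vanishing of $H^{p+q-1-k,k}_{\partial}(X)\cong\overline{H^{k,p+q-1-k}_{\overline{\partial}}(X)}$; here the relevant $\overline{\partial}$-group has second index $p+q-1-k\ge p\ge\min\{p,q\}$, so it is covered by the hypothesis. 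Symmetrically, from the top the bidegree-$(0,p+q)$ part gives $\overline{\partial}\gamma_{p+q-1}=0$, and the vanishing of $H^{p+q-1-k,k}_{\overline{\partial}}(X)$ for $k>q$ (second index $k>q\ge\min\{p,q\}$) lets me peel off $\gamma_{p+q-1},\dots,\gamma_{q+1}$. Each correction pushes the alteration toward $\gamma_q$ and never touches a component already set to zero, so the two staircases leave exactly $\gamma=\gamma_q$. At this last stage $\alpha=\partial\gamma_q$ and $\overline{\partial}\gamma_q=0$; as $H^{p-1,q}_{\overline{\partial}}(X)=0$ (second index $q\ge\min\{p,q\}$), I get $\gamma_q=\overline{\partial}\mu$ and hence $\alpha=\partial\overline{\partial}\mu$, completing the proof. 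By design every Dolbeault group invoked lies among the summands $H^{r,s}_{\overline{\partial}}(X)$ with $r+s=p+q-1$ and $s\ge\min\{p,q\}$ (after conjugating the $\partial$-cohomology groups), so the single hypothesis exactly suffices.

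The hard part will be the bookkeeping of this zig-zag: checking that each $d$-exact correction annihilates precisely one component, perturbs only its neighbour on the side of $\gamma_q$, and preserves the components already cleared, so that the upward and downward staircases meet consistently at $\gamma_q$. A secondary technical point is the systematic use of complex conjugation to turn vanishing of $\partial$-cohomology into vanishing of $\overline{\partial}$-cohomology, which is what allows the lower staircase to be controlled by the one-sided hypothesis on $\overline{\partial}$-groups alone.
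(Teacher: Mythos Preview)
The paper does not prove this statement at all; it is quoted verbatim from \cite[Theorem~1.1]{AngellaDaniele2013Bcaq} and used as a black box in the proof of Lemma~\ref{annulusBC}. Your proposal therefore cannot be compared to a proof in the present paper, but on its own merits it is correct and is precisely the standard zig-zag argument one expects: the decomposition of $\gamma$ into pure bidegrees, the two staircases clearing $\gamma_0,\dots,\gamma_{q-1}$ from below via $\partial$-exactness and $\gamma_{p+q-1},\dots,\gamma_{q+1}$ from above via $\overline{\partial}$-exactness, and the final step $\gamma_q=\overline{\partial}\mu$ using $H^{p-1,q}_{\overline{\partial}}(X)=0$, are all correct. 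Your bookkeeping is accurate---at the $k$-th lower step the relevant group is $\overline{H^{k,\,p+q-1-k}_{\overline{\partial}}(X)}$ with second index $p+q-1-k\ge p\ge\min\{p,q\}$, at the $k$-th upper step it is $H^{p+q-1-k,\,k}_{\overline{\partial}}(X)$ with $k>q\ge\min\{p,q\}$, and at the final step it is $H^{p-1,q}_{\overline{\partial}}(X)$ with $q\ge\min\{p,q\}$---so every Dolbeault group invoked indeed lies in the range covered by the hypothesis. The two staircases modify disjoint sets of components (meeting only at $\gamma_q$), so there is no interference, and each $d$-exact correction $d\nu$ or $d\mu$ shifts only the intended neighbouring component, as you claim.
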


\begin{lem}\label{annulusBC}
	    Let $\mathcal{A}$ be an annulus in $\mathbb{C}^n$, with $n \ge 3$. Then the following vanishing results hold for the Aeppli and Bott--Chern cohomology of $\mathcal{A}$:
\begin{align*}
H^{p,q}_A(\mathcal{A}) &= 0, \quad  1 \le p,q \le n-2\\
H^{p,q}_{BC}(\mathcal{A}) &= 0, \quad  1 \le p,q \text{  and  } p+q\le n-1.
\end{align*}
	\end{lem}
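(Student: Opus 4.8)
The plan is to derive both vanishing statements directly from the Dolbeault vanishing already established in Lemma~\ref{annulus}, by feeding it into the two criteria of Angella recalled in Theorem~\ref{Angella1} and Theorem~\ref{Angella2}, together with the de~Rham cohomology of the annulus. No further analytic input should be needed: Lemma~\ref{annulus} (via the Dolbeault theorem) gives $H^{r,s}_{\overline{\partial}}(\mathcal{A})=H^s(\mathcal{A},\Omega^r)=0$ for all $r$ and all $1\le s\le n-2$, and since an annulus retracts onto $\mathbb{S}^{2n-1}$ we also have $H^k_{dR}(\mathcal{A};\mathbb{C})=0$ for $1\le k\le 2n-2$.

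For the Aeppli part, I would fix $1\le p,q\le n-2$ and simply observe that both $H^{p,q}_{\overline{\partial}}(\mathcal{A})$ and $H^{q,p}_{\overline{\partial}}(\mathcal{A})$ vanish, because in each case the second (antiholomorphic) index lies in the range $[1,n-2]$ covered by Lemma~\ref{annulus}. Theorem~\ref{Angella1} then yields $H^{p,q}_A(\mathcal{A})=0$ at once. This step is immediate.

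For the Bott--Chern part, I would fix $p,q\ge 1$ with $p+q\le n-1$ and check the hypothesis of Theorem~\ref{Angella2}: for every pair $(r,s)$ with $r+s=p+q-1$ and $s\ge\min\{p,q\}$, one has $1\le\min\{p,q\}\le s\le p+q-1\le n-2$ and $r=p+q-1-s\ge 0$, so $H^{r,s}_{\overline{\partial}}(\mathcal{A})=0$ by Lemma~\ref{annulus}. Hence the relevant sum vanishes and Theorem~\ref{Angella2} produces an injection $H^{p,q}_{BC}(\mathcal{A})\hookrightarrow H^{p+q}_{dR}(\mathcal{A};\mathbb{C})$. Since $2\le p+q\le n-1\le 2n-2$, the target is zero, and therefore $H^{p,q}_{BC}(\mathcal{A})=0$.

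The proof is essentially bookkeeping with index ranges, and the only point requiring care—which I regard as the ``main obstacle,'' such as it is—is verifying that the index $s$ appearing in Angella's criteria always falls in the window $[1,n-2]$ where Lemma~\ref{annulus} applies; this is precisely what forces the hypotheses $p,q\le n-2$ in the Aeppli case and $p+q\le n-1$ in the Bott--Chern case, and it is also why the constraint $n\ge 3$ is needed so that these ranges are nonempty.
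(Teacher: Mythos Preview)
Your proof is correct and follows exactly the same approach as the paper: both invoke Lemma~\ref{annulus} together with Theorems~\ref{Angella1} and~\ref{Angella2}, using the de~Rham vanishing for the annulus (which retracts onto $\mathbb{S}^{2n-1}$) to kill the target of the injection in the Bott--Chern case. You have simply spelled out the index-range verifications that the paper leaves implicit.
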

\begin{proof}
    Both vanishing results follow from the vanishing of the Dolbeault cohomology of an annulus in bidegree $(p,q)$ for $1 \le q \le n-2$ (Lemma~\ref{annulus}), together with Theorems \ref{Angella1},\ref{Angella2}.
\end{proof}

\begin{prop}\label{generalization2}
Let $X$ be a compact complex manifold, and let 
$\pi : \widehat{X} \to X$ be a proper holomorphic modification of $X$ at a point 
$p \in X$.  
Let $\mathbb{B}$ be a ball in $X$ centered at $p$, and let 
$\pi : \widehat{\mathbb{B}} \to \mathbb{B}$ denote the induced modification on \(\mathbb{B}\).  
Then, for any $2 \le p,q \le n-1$ with $p+q\le n-1$, we have
\[
h^{p,q}_{BC}(\widehat{X})
    =h^{p,q}_{BC}(X)+ h^{p,q}_{BC}(\widehat{\mathbb{B}}).
\]

\end{prop}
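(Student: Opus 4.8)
The plan is to imitate, essentially line for line, the proof of Proposition~\ref{generalization} (equivalently, the Dolbeault argument in the first half of Proposition~\ref{noWFT}), systematically replacing the Dolbeault Mayer--Vietoris sequence by the Bott--Chern--Aeppli sequence (\ref{MVBottAeppli}) attached to the Schweitzer complex, and the vanishing of Lemma~\ref{annulus} by that of Lemma~\ref{annulusBC}. The restriction to the bidegrees $2\le p,q\le n-1$ with $p+q\le n-1$ is exactly what is forced by Lemma~\ref{annulusBC}: it is the range in which \emph{both} the Aeppli group $H^{p-1,q-1}_A$ and the Bott--Chern group $H^{p,q}_{BC}$ of an annulus in $\mathbb{C}^n$ vanish.

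Concretely, I would fix a ball $\mathbb{B}_0$ with $\overline{\mathbb{B}_0}\subset\mathbb{B}$ and cover $\widehat{X}$ by $U:=\widehat{\mathbb{B}}=\pi^{-1}(\mathbb{B})$ and $V:=\widehat{X}\setminus\overline{\pi^{-1}(\mathbb{B}_0)}$. Since $\pi$ is a biholomorphism away from $p$, one has $U\cap V\cong\mathcal{A}:=\mathbb{B}\setminus\overline{\mathbb{B}_0}$, an annulus, and $V\cong X\setminus\overline{\mathbb{B}_0}$. Feeding $\{U,V\}$ into (\ref{MVBottAeppli}) in bidegree $(p,q)$ gives the exact fragment
\[
H^{p-1,q-1}_A(\mathcal{A}) \to H^{p,q}_{BC}(\widehat{X}) \to H^{p,q}_{BC}(\widehat{\mathbb{B}}) \oplus H^{p,q}_{BC}\!\big(X\setminus\overline{\mathbb{B}_0}\big) \to H^{p,q}_{BC}(\mathcal{A}),
\]
whose two outer terms vanish by Lemma~\ref{annulusBC} in the stated range; the fragment therefore collapses to an isomorphism $H^{p,q}_{BC}(\widehat{X})\cong H^{p,q}_{BC}(\widehat{\mathbb{B}})\oplus H^{p,q}_{BC}(X\setminus\overline{\mathbb{B}_0})$. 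Running the same sequence for the cover $\{\mathbb{B},\,X\setminus\overline{\mathbb{B}_0}\}$ of $X$, and using that $H^{p,q}_{BC}(\mathbb{B})=0$ for $p,q\ge 1$ (the local $\partial\overline{\partial}$-lemma, see \cite{schweitzer2007autourlacohomologiebottchern}) together with the same annulus vanishings, one obtains $H^{p,q}_{BC}(X\setminus\overline{\mathbb{B}_0})\cong H^{p,q}_{BC}(X)$. Combining the two isomorphisms gives $h^{p,q}_{BC}(\widehat{X})=h^{p,q}_{BC}(X)+h^{p,q}_{BC}(\widehat{\mathbb{B}})$. As in the Dolbeault case, applying this twice --- to $\widehat{X}$ and to the induced modification of $\mathbb{CP}^n$ --- then yields the Bott--Chern analogue of Corollary~\ref{HodgeBetti}.

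I do not expect a genuine obstacle here. The Mayer--Vietoris sequence for the Schweitzer complex is exact because the sheaves $\mathcal{A}^{r,s}_X$ are fine, and the identifications $U\cap V\cong\mathcal{A}$ and $V\cong X\setminus\overline{\mathbb{B}_0}$ are immediate from $\pi$ being a modification at the single point $p$. The only point demanding attention --- and the reason the statement is more restricted than its Dolbeault counterpart --- is that the four-term Mayer--Vietoris fragment now mixes an \emph{Aeppli} group with a Bott--Chern group of the overlap, so the collapsing step is available only where both vanish; keeping track of the hypotheses of Lemma~\ref{annulusBC} (together with the vanishing of $H^{p,q}_{BC}(\mathbb{B})$ for $p,q\ge 1$) pins this down to precisely $2\le p,q\le n-1$, $p+q\le n-1$. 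Everything else runs parallel to the proof of Proposition~\ref{generalization} verbatim.
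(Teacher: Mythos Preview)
Your proposal is correct and follows exactly the approach the paper indicates: the paper states only that the proof of Proposition~\ref{generalization2} is ``the same as in the Dolbeault cohomology case, but using Lemma~\ref{annulusBC} and the exact sequence in (\ref{MVBottAeppli}),'' which is precisely what you carry out. Your identification of the range $2\le p,q$, $p+q\le n-1$ as the exact locus where both the Aeppli term $H^{p-1,q-1}_A(\mathcal{A})$ and the Bott--Chern term $H^{p,q}_{BC}(\mathcal{A})$ vanish is also on the mark.
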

As a direct application of Proposition~\ref{generalization2}, we obtain the following corollary:

\begin{cor}\label{BC22}
    Let $X$ be a compact complex manifold, and let 
$\pi : \widehat{X} \to X$ be a proper holomorphic modification of $X$ at a point 
$p \in X$.  
Let $\widehat{\mathbb{CP}^n}$ be the modification at a point of $\mathbb{CP}^n$ induced by $\pi$.
Then, for any $2 \le p,q \le n-1$ with $p+q\le n-1$, we have
\[
h^{p,q}_{BC}(\widehat{X})
    = h^{p,q}_{BC}(X)
      + h^{p,q}_{BC}(\widehat{\mathbb{CP}^n})-h^{p,q}(\mathbb{CP}^n).
\]
\end{cor}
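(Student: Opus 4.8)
The plan is to mirror the deduction of Corollary~\ref{HodgeBetti} from Proposition~\ref{generalization}, now in the Bott--Chern setting using Proposition~\ref{generalization2}. The key structural observation is that the proper modification $\pi:\widehat{X}\to X$ at the point $p$ is determined, near $p$, by its restriction to a small ball $\mathbb{B}$ centered at $p$, and that this same germ of modification is precisely the datum inducing the modification $\pi:\widehat{\mathbb{CP}^n}\to\mathbb{CP}^n$ at a point. Consequently the induced modification $\pi:\widehat{\mathbb{B}}\to\mathbb{B}$ that appears in Proposition~\ref{generalization2} is the same object whether we regard $\mathbb{B}$ as an affine ball inside $X$ or inside $\mathbb{CP}^n$.

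First I would apply Proposition~\ref{generalization2} to $\pi:\widehat{X}\to X$ with $\mathbb{B}\subset X$ a ball centered at $p$, obtaining, for all $2\le p,q\le n-1$ with $p+q\le n-1$,
\[
h^{p,q}_{BC}(\widehat{X})=h^{p,q}_{BC}(X)+h^{p,q}_{BC}(\widehat{\mathbb{B}}).
\]
Next I would apply the same proposition to the induced modification $\pi:\widehat{\mathbb{CP}^n}\to\mathbb{CP}^n$, with $\mathbb{B}$ now viewed as an affine ball around the modified point of $\mathbb{CP}^n$, to get, in the same range of bidegrees,
\[
h^{p,q}_{BC}(\widehat{\mathbb{CP}^n})=h^{p,q}_{BC}(\mathbb{CP}^n)+h^{p,q}_{BC}(\widehat{\mathbb{B}}).
\]
Subtracting these two identities eliminates $h^{p,q}_{BC}(\widehat{\mathbb{B}})$ and yields
\[
h^{p,q}_{BC}(\widehat{X})=h^{p,q}_{BC}(X)+h^{p,q}_{BC}(\widehat{\mathbb{CP}^n})-h^{p,q}_{BC}(\mathbb{CP}^n).
\]
Since $\mathbb{CP}^n$ is K\"ahler, it satisfies the $\partial\overline{\partial}$-lemma, so all its cohomologies coincide and $h^{p,q}_{BC}(\mathbb{CP}^n)=h^{p,q}(\mathbb{CP}^n)$; substituting this gives the claimed formula.

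There is no real obstacle specific to this corollary: all the analytic content---the Mayer--Vietoris sequence for the Schweitzer complex in~(\ref{MVBottAeppli}), the annulus vanishing of Lemma~\ref{annulusBC}, and the Hartogs-type extension arguments---has already been absorbed into Proposition~\ref{generalization2}. The only point requiring (minor) care is the bookkeeping of bidegrees: I would verify that $2\le p,q\le n-1$ with $p+q\le n-1$ is exactly the hypothesis under which Proposition~\ref{generalization2} applies to both modifications simultaneously, so that the subtraction is legitimate in every bidegree asserted.
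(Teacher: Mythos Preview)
Your proposal is correct and follows exactly the approach the paper intends: the paper states Corollary~\ref{BC22} as a direct application of Proposition~\ref{generalization2}, which amounts precisely to applying that proposition once to $\widehat{X}\to X$ and once to $\widehat{\mathbb{CP}^n}\to\mathbb{CP}^n$ and subtracting, just as in the derivation of Corollary~\ref{HodgeBetti} from Proposition~\ref{generalization}. Your observation that $h^{p,q}_{BC}(\mathbb{CP}^n)=h^{p,q}(\mathbb{CP}^n)$ via the $\partial\overline{\partial}$-lemma on $\mathbb{CP}^n$ correctly accounts for the mixed notation in the statement.
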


Now we prove Proposition \ref{BottChern}.
\begin{proof}
Let $X$ be a Kato manifold with an open cover $(U,V)$ as in Section~\ref{picture}. Using the same arguments as in Section \ref{picture} and Proposition \ref{generalization2}, we find that, for any $2 \le p,q \le n-1$ with $p+q\le n-1$,
\begin{align*}
    H^{p,q}_{BC}(X) &\cong H^{p,q}_{BC}(V) \\
    &\cong H^{p,q}_{BC}(\widehat{\mathbb{B}}) \\
    &\cong H^{p,q}_{BC}(H) \oplus H^{p,q}_{BC}(\widehat{\mathbb{B}}) \cong H^{p,q}_{BC}(\widehat{H}),
\end{align*}
since the Bott--Chern numbers of a primary Hopf manifold are given by \cite[Theorem 4.2]{istrati2025propertieshopfmanifolds}:
      \begin{equation}\label{BCHopf}
         h_{BC}^{p,q}=\begin{cases}
      1, & \text{if}\,\, (p, q) \in \{(0, 0), (1,1), (n-1,n), (n, n-1), (n,n)\}\\
      0, & \text{otherwise.}
    \end{cases}      
      \end{equation} 
If $k \neq 0$, then
\[
h^{k,0}_{BC}(X) = 0 = h^{k,0}_{BC}(\widehat{H}),
\]
since both $X$ and $\widehat{H}$ satisfy $h^{k,0}_{\bar\partial}=0$.\\
If $k = 0$, then
\(
h^{0,0}_{BC}(X) = 1 = h^{0,0}_{BC}(\widehat{H}),
\)
as both manifolds are compact and connected.

We now study the Bott--Chern cohomology in bidegree $(p,q)$ with $p=n$. Assume first that $q \neq n,n-1$. Let $\alpha \in A^{n,q}(X)$ be a $\overline{\partial}$-closed $(n,q)$-form. Then $[\alpha] \in H^{n,q}_{\overline{\partial}}(X) \cong H^{0,n-q}_{\overline{\partial}}(X) = 0$, so that $\alpha = \overline{\partial}\gamma$ for some $\gamma \in A^{n,q-1}(X)$ $\partial$-closed. Moreover,
\[
H^{n,q-1}_{\partial}(X) = \overline{H^{q-1,n}_{\overline{\partial}}(X)} = \overline{H^{n-q+1,0}_{\overline{\partial}}(X)} = 0,
\]
which implies that $\gamma$ is $\partial$-exact. Hence, $\alpha$ is $\partial\overline{\partial}$-exact.
As this argument depends only on the Dolbeault cohomology of $X$, we obtain
\[
h^{n,q}_{BC}(X) = h^{n,q}_{BC}(\widehat{H}) = 0.
\]
Using an analogous argument for $\bar\partial$-closed $(n,q)$-forms not representing the generator of the corresponding Dolbeault cohomology group, we find that if $q = n-1$ or $q = n$, then
\[
h^{n,q}_{BC}(X) =1 = h^{n,q}_{BC}(\widehat{H}) .
\]
\end{proof}

We conclude with two theorems that establish a relation between the Bott--Chern
and Aeppli cohomology of Kato manifolds and the
corresponding Dolbeault cohomology in certain degrees.
\begin{thm}\label{BCsameH}
Let $X = X(\pi,\sigma)$ be a Kato manifold.
If $(p,q)$ satisfies either
\begin{enumerate}
    \item $p,q \ge 2$ and $p+q \le n-1$, or
    \item $p = n$, or
    \item $q = 0$,
\end{enumerate}
then the Bott--Chern numbers of $X$ coincide with the corresponding Dolbeault numbers:
\[
    h^{p,q}_{BC}(X) = h^{p,q}_{\overline{\partial}}(X).
\]
\end{thm}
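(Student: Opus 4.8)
The plan is to treat the three cases of the statement separately. Cases~(2) and~(3) will be quick consequences of arguments already present in the proof of Proposition~\ref{BottChern}, combined with the explicit Hodge diamond of $X$ from Theorem~\ref{Hodge numbers21}; the substance is case~(1), which I would reduce to the fact that the Moishezon manifold $\widehat{\mathbb{CP}^n}$ satisfies the $\partial\overline{\partial}$-lemma.

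For case~(3), first note that in bidegree $(p,0)$ one has $\mathrm{Im}\,\partial\overline{\partial}\cap A^{p,0}(X)=0$ and $\mathrm{Im}\,\overline{\partial}\cap A^{p,0}(X)=0$, so $H^{p,0}_{BC}(X)$ is the space of holomorphic $p$-forms annihilated by $\partial$ and maps injectively into $H^{p,0}_{\overline{\partial}}(X)=H^0(X,\Omega^p)$; since $h^{p,0}_{\overline{\partial}}(X)=0$ for $p\neq 0$ and $h^{0,0}_{BC}(X)=1=h^{0,0}_{\overline{\partial}}(X)$ ($X$ being compact and connected), the equality $h^{p,0}_{BC}(X)=h^{p,0}_{\overline{\partial}}(X)$ follows for all $p$. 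For case~(2) I would invoke the computation of $H^{n,q}_{BC}(X)$ carried out in the proof of Proposition~\ref{BottChern}: it depends only on the Dolbeault cohomology of $X$ and shows that $h^{n,q}_{BC}(X)=0$ for $q\neq n-1,n$, while for $q\in\{n-1,n\}$ the class of a Dolbeault generator --- automatically $\partial$-closed, being of top holomorphic degree --- spans $H^{n,q}_{BC}(X)$, whence $h^{n,q}_{BC}(X)=1$. Comparing with the Hodge diamond of $X$, which gives $h^{n,q}_{\overline{\partial}}(X)=1$ exactly for $q\in\{n-1,n\}$, yields $h^{n,q}_{BC}(X)=h^{n,q}_{\overline{\partial}}(X)$.

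For case~(1), where $2\le p,q$ and $p+q\le n-1$, the starting point is that the Mayer--Vietoris argument of Section~\ref{picture} --- in its Dolbeault form \eqref{cn1} and in the Bott--Chern form used in the proof of Proposition~\ref{BottChern} --- gives isomorphisms $H^{p,q}_{\overline{\partial}}(X)\cong H^{p,q}_{\overline{\partial}}(\widehat{\mathbb{B}})$ and $H^{p,q}_{BC}(X)\cong H^{p,q}_{BC}(\widehat{\mathbb{B}})$. I would then compactify $\widehat{\mathbb{B}}$ inside $\widehat{\mathbb{CP}^n}$ and apply Proposition~\ref{generalization} and Proposition~\ref{generalization2} to the modification of $\mathbb{CP}^n$ at a point induced by $\pi$; since $\mathbb{CP}^n$ is Kähler, $h^{p,q}_{BC}(\mathbb{CP}^n)=h^{p,q}(\mathbb{CP}^n)=h^{p,q}_{\overline{\partial}}(\mathbb{CP}^n)$, and one obtains
\[
h^{p,q}_{\overline{\partial}}(\widehat{\mathbb{B}})=h^{p,q}_{\overline{\partial}}(\widehat{\mathbb{CP}^n})-h^{p,q}(\mathbb{CP}^n),
\qquad
h^{p,q}_{BC}(\widehat{\mathbb{B}})=h^{p,q}_{BC}(\widehat{\mathbb{CP}^n})-h^{p,q}(\mathbb{CP}^n).
\]
Finally, since $\widehat{\mathbb{CP}^n}$ is a proper modification of projective space it is Moishezon, hence it lies in Fujiki's class $\mathcal{C}$ and satisfies the $\partial\overline{\partial}$-lemma; in particular $h^{p,q}_{BC}(\widehat{\mathbb{CP}^n})=h^{p,q}_{\overline{\partial}}(\widehat{\mathbb{CP}^n})$. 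Chaining the two displayed identities gives $h^{p,q}_{BC}(X)=h^{p,q}_{\overline{\partial}}(X)$.

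The main obstacle --- really the only place where Kähler geometry enters --- is this last input: one needs that the Moishezon manifold $\widehat{\mathbb{CP}^n}$ satisfies the $\partial\overline{\partial}$-lemma, equivalently that $\partial\overline{\partial}$ is a bimeromorphic invariant and $\widehat{\mathbb{CP}^n}$ is bimeromorphic to a projective manifold. This plays, for Bott--Chern cohomology, the role that the Hodge decomposition on $\widehat{\mathbb{CP}^n}$ played in Section~\ref{finalsection}. Beyond that, one only needs to check that the degree ranges in Propositions~\ref{generalization} and~\ref{generalization2} and in the Mayer--Vietoris isomorphisms all cover case~(1) --- they do, since $2\le p,q$ and $p+q\le n-1$ force $q\le n-3$ --- and that the Dolbeault-generator subcase $q\in\{n-1,n\}$ of case~(2) is handled as in the proof of Proposition~\ref{BottChern}. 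Alternatively, case~(1) could be obtained by using a Bott--Chern blow-up formula in place of the Mayer--Vietoris argument, exactly as in Remark~\ref{corollario}.
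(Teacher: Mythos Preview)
Your proposal is correct and follows essentially the same approach as the paper: cases~(2) and~(3) are handled by the explicit computations already carried out in the proof of Proposition~\ref{BottChern}, and case~(1) is reduced, via Mayer--Vietoris and Propositions~\ref{generalization} and~\ref{generalization2}, to the $\partial\overline{\partial}$-lemma on the Moishezon manifold $\widehat{\mathbb{CP}^n}$. The only cosmetic difference is that the paper routes case~(1) through $\widehat{H}$ and $H$ (using Proposition~\ref{BottChern}, Corollary~\ref{BC22}, and the vanishing \eqref{BCHopf} of $h^{p,q}_{BC}(H)$ in this range), whereas you go directly from $\widehat{\mathbb{B}}$ to $\widehat{\mathbb{CP}^n}$; this is a harmless streamlining, since the first isomorphism $H^{p,q}_{BC}(X)\cong H^{p,q}_{BC}(\widehat{\mathbb{B}})$ you invoke is exactly the content of the proof of Proposition~\ref{BottChern}.
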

\begin{proof}
    The manifold $\widehat{\mathbb{CP}^n}$ satisfies the $\partial\overline{\partial}$-lemma,
since it is Moishezon \cite{DGMS75}.
Consequently, the Dolbeault cohomology of $\widehat{\mathbb{CP}^n}$ coincides with its Bott--Chern cohomology. Therefore, for any $(p,q)$ such that $p,q \ge 2$ and $p+q \le n-1$, the claim follows from Theorem \ref{Hodge numbers21}, Proposition \ref{BottChern}, and Corollary \ref{BC22}: 
\begin{align*}
     h^{p,q}_{BC}(X)&\overset{\ref{BottChern}}{=}h^{p,q}_{BC}(\widehat{H})\\
     &\overset{\ref{BC22}}{=}h^{p,q}_{BC}(\widehat{\mathbb{CP}^n})-h^{p,q}_{BC}(\mathbb{CP}^n)+h^{p,q}_{BC}(H)\\
     &\overset{\ref{BCHopf}}{=}h^{p,q}_{\overline{\partial}}(\widehat{\mathbb{CP}^n})-h^{p,q}_{\overline{\partial}}(\mathbb{CP}^n)\overset{\ref{Hodge numbers21}}{=}h^{p,q}_{\overline{\partial}}(X).
\end{align*}
If $p = n$ or $q = 0$, the identity can be checked directly. Indeed, in this case the Bott--Chern numbers were computed in the proof of Proposition~\ref{BottChern}: 
they vanish except for $(p,q) \in \{(0,0),(n,n-1),(n,n)\}$, 
where they are equal to one.

\end{proof}

Using the duality between Bott--Chern and Aeppli cohomology~\cite{schweitzer2007autourlacohomologiebottchern},
\[
h^{p,q}_{BC}(X) \cong h^{n-p,n-q}_{A}(X),
\]
together with Serre duality for Dolbeault cohomology,
we obtain an analogue of Theorem~\ref{BCsameH} for Aeppli cohomology.
\begin{cor}
Let $X = X(\pi,\sigma)$ be a Kato manifold.
If $(p,q)$ satisfies either
\begin{enumerate}
    \item $p,q \le n-2$ and $p+q \ge n+1$, or
    \item $p = 0$, or
    \item $q = n$,
\end{enumerate}
then the Aeppli numbers of $X$ coincide with the corresponding Dolbeault numbers:
\[
    h^{p,q}_{A}(X) = h^{p,q}_{\overline{\partial}}(X).
\]
\end{cor}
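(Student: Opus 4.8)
The plan is to derive this corollary as a formal consequence of Theorem~\ref{BCsameH} via the two standard dualities recalled just before the statement. The key observation is that the three cases listed here are precisely the images of the three cases of Theorem~\ref{BCsameH} under the involution $(p,q)\mapsto(n-p,n-q)$: the range $p,q\ge 2$ and $p+q\le n-1$ corresponds to $p,q\le n-2$ and $p+q\ge n+1$; the locus $p=n$ corresponds to $p=0$; and the locus $q=0$ corresponds to $q=n$. So the whole argument is a translation.

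First I would fix $(p,q)$ satisfying one of the three hypotheses, and set $(p',q'):=(n-p,n-q)$, noting that $(p',q')$ then satisfies the correspondingly numbered hypothesis of Theorem~\ref{BCsameH}. Applying the Bott--Chern/Aeppli duality $h^{p,q}_A(X)\cong h^{p',q'}_{BC}(X)$ gives $h^{p,q}_A(X)=h^{p',q'}_{BC}(X)$, and Theorem~\ref{BCsameH} applied to $(p',q')$ gives $h^{p',q'}_{BC}(X)=h^{p',q'}_{\bar\partial}(X)$. Finally, Serre duality $h^{p',q'}_{\bar\partial}(X)=h^{n-p',n-q'}_{\bar\partial}(X)=h^{p,q}_{\bar\partial}(X)$ closes the chain, yielding $h^{p,q}_A(X)=h^{p,q}_{\bar\partial}(X)$.

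I do not expect a genuine obstacle here; the only thing to be careful about is bookkeeping with the indices — making sure the inequality constraints really do transform correctly under the involution, and that the edge cases ($p=0$ versus $p=n$, $q=n$ versus $q=0$) are matched to the right clause of Theorem~\ref{BCsameH}. One mild subtlety worth a sentence of comment: the Bott--Chern/Aeppli duality as stated requires the manifold to be compact (so that the relevant pairing is perfect), which holds since any Kato manifold is compact by construction; likewise Serre duality is valid on the compact complex manifold $X$. With those remarks in place the proof is a three-line display chaining the three isomorphisms, exactly as in the proof already sketched in the excerpt, and nothing further is needed.
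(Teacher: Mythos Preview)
Your proposal is correct and is exactly the argument the paper indicates: the corollary is deduced from Theorem~\ref{BCsameH} by applying the Bott--Chern/Aeppli duality $h^{p,q}_{A}(X)=h^{n-p,n-q}_{BC}(X)$ together with Serre duality $h^{p,q}_{\bar\partial}(X)=h^{n-p,n-q}_{\bar\partial}(X)$, and your verification that the three index ranges correspond under $(p,q)\mapsto(n-p,n-q)$ is accurate.
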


\bibliographystyle{alpha}

\end{document}